\documentclass[12pt, a4paper]{article}

\usepackage{a4,amsmath,amssymb, amsthm, latexsym, color, graphicx,url, array}
\usepackage{tikz}
\usetikzlibrary{arrows.meta}
\usetikzlibrary{decorations.markings}
\usetikzlibrary{calc}

\usepackage{fullpage}
\usepackage{enumitem}
\usepackage{xcolor}
\usepackage{authblk}
\usepackage{fullpage}
\usepackage{enumitem}
\usepackage{xcolor}
\newtheorem{theorem}{Theorem}[section]
\newtheorem{proposition}[theorem]{Proposition}
\newtheorem{lemma}[theorem]{Lemma}
\newtheorem{corollary}[theorem]{Corollary}

\theoremstyle{definition}
\newtheorem{example}[theorem]{Example}
\newtheorem{definition}[theorem]{Definition}
\newtheorem{remark}[theorem]{Remark}

\title{Cyclotomic skew partial difference sets and partial difference families}
\author[1] {Sophie Huczynska}
\author[2] {Tekg\"{u}l Kalayc\i}
\affil[1]{School of Mathematics and Statistics, University of St Andrews, St Andrews, KY16 9SS, Scotland, UK; email: sh70@st-andrews.ac.uk}
\affil[2]{Institut f\"ur Mathematik, Alpen-Adria-Universit\"at Klagenfurt, Austria; email: tekgulkalayci1@gmail.com}
\date{MSC codes: 05B10, 11T22}
\begin{document}
\maketitle

\begin{abstract}
Skew partial difference sets (skew PDSs) are recently-introduced combinatorial objects closely related to partial difference sets (PDSs). To date, only one construction approach for non-trivial skew PDSs is known, using bent partitions: this produces examples of Latin square type.  In this paper we show that these examples are not an isolated phenomenon; we present new constructions for families of skew PDSs using cyclotomy in finite fields. We provide the first constructions for skew PDSs of Paley type, and new constructions for Latin square type (with different parameters to those from bent partitions).  Moreover, we show how skew PDSs relate to disjoint and external partial difference families (DPDFs/EPDFs), and provide new cyclotomic constructions of both standard and relative DPDFs and EPDFs.
\end{abstract}

\section{Introduction}

Partial difference sets (PDSs) are much-studied combinatorial structures which have close links with strongly regular graphs, two-weight codes and other combinatorial objects \cite{CalKan,Ma84,Ma94,MomWanXia}.  A partial difference set is a subset $A$ of an additively-written group $G$ with the property that the multiset of pairwise differences between distinct elements of the set comprises all non-identity elements of $A$ occurring at one frequency $\lambda$, and all non-identity elements of $G \setminus A$ occurring at another frequency $\mu$.  

The study of partial difference sets forms part of a larger research landscape in which collections of sets are studied in terms of certain regular behaviour of the pairwise differences within the sets (e.g. difference sets \cite{Sto}, almost difference sets \cite{Ara,DinPotWan}, difference families \cite{Bur} and disjoint partial difference families \cite{HucJoh}), or between the sets (e.g. external difference families \cite{ChaDin,PatSti} and external partial difference families \cite{HucJoh}).  Interest in these is motivated both by their inherent combinatorial interest and their wider applications, for example to experiment design, coding theory and cryptography \cite{PatSti}.  Various techniques have been used to construct such structures, including finite geometry, cyclotomy in finite fields, and group theory (among many others).

Recently, in \cite{AnbKalMei}, a new type of difference structure was introduced, called a skew PDS.  This is a set $D \subseteq G$ such that, for some PDS $A \subseteq G$ with $|D|=|A|$, the multiset of all pairwise differences between distinct elements of $D$ comprises $\lambda$ occurrences of non-identity elements of $A$ and $\mu$ copies of the non-identity elements of $G \setminus A$.  In other words, $D$ has the internal difference profile of $A$, but need not be $A$ itself (indeed the case where $D$ is $A$ or an additive translate of $A$ is viewed as a trivial case).  Such a $D$ is said to be a skew PDS corresponding to the PDS $A$.

In \cite{AnbKalMei}, the first construction method for non-trivial skew PDSs was presented.  The approach uses bent partitions of elementary abelian groups.  Bent partitions were introduced recently in \cite{AnbMei}, and have previously been used to obtain partial difference sets and LP-packings.  The skew PDSs obtained correspond to PDSs of Latin square type.

In this paper, we demonstrate that skew PDSs can be constructed in other ways, and that there exist families of skew PDSs corresponding to types other than Latin square type.  We present the first cyclotomic constructions for skew PDSs: this includes the first constructions for skew PDSs of Paley type, as well as constructions for skew PDSs of Latin square type with parameters different to those of \cite{AnbKalMei}.  We note the connection between skew PDSs and disjoint and external partial difference families relative to PDSs; arising from this link, we exhibit constructions for relative DPDFs and EPDFs, answering an Open Question of \cite{HucJoh}.  These are relative to Paley PDSs. Moreover, we show that some of the approaches used here to obtain relative DPDFs and EPDFs can be applied to obtain new constructions for standard DPDFs and EPDFs which partition Paley PDSs. The work was underpinned by computational search for examples using GAP \cite{GAP4}.

\section{Background}

Throughout, let $G$ be a group, written additively.  For any $A \subseteq G$, denote the set of its non-identity elements by $A^*=A \setminus \{0\}$.

\subsection{Partial difference sets and skew partial difference sets}

For a subset $D$ of $G$, define the multiset 
$$\Delta(D)=\{ x-y: x \neq y \in D \}$$
and for sets $D_1, D_2 \subseteq G$, define the multiset 
$$\Delta(D_1, D_2)=\{ x-y: x \in D_1, y \in D_2 \}.$$  

For two multisets $M$ and $N$, denote by $M+N$ the multiset in which the elements of $M$ and $N$ appear with the sum of their multiplicities in $M$ and $N$.  For $\lambda \in \mathbb{N}$, denote by $\lambda M$ the multiset comprising $\lambda$ copies of $M$.

\begin{definition}
Let $G$ be a group of order $v$, and let $A$ be a subset of $G$ of size $k$. Then $A$ is a $(v,k,\lambda,\mu)$ \emph{partial difference set} in $G$ if the following multiset equation holds:
$$\Delta(A)=\lambda A^* + \mu (G^*\setminus A).$$
\end{definition}

If a PDS $A$ satisfies $0 \neq A$ and $A=\{-a: a \in A \}=-A$, it is called \emph{regular}.  We have the following results for PDSs \cite{HucJoh, Ma94}:
\begin{proposition}
Let $A$ be a $(v,k,\lambda,\mu)$-PDS.
\begin{itemize}
\item[(i)] If $\lambda \neq \mu$ (i.e., $A$ is not a DS) then $A=-A$.
\item[(ii)] If $A=-A$ then all of the following are PDSs: $A \setminus \{0\}$, $A \cup \{0\}$, $G \setminus A$, $(G \setminus A) \setminus \{0\}$ and $(G \setminus A) \cup \{0\}$.
\end{itemize}
\end{proposition}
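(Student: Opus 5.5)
We work in the group ring $\mathbb{Z}[G]$, writing $S$ also for the element $\sum_{s\in S}s$ and $S^{(-1)}=\sum_{s\in S}(-s)$. The defining equation of a $(v,k,\lambda,\mu)$-PDS becomes
\[
AA^{(-1)} = k\,\epsilon + \lambda A^* + \mu(G^*\setminus A) = (k-\mu)\epsilon + (\lambda-\mu)A + \mu G - (\lambda-\mu)\delta\,\epsilon,
\]
where $\epsilon$ is the identity of $\mathbb{Z}[G]$ and $\delta\in\{0,1\}$ records whether $0\in A$. We will use only this identity and the facts $GX=|X|G$ and $G^{(-1)}=G$.

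For (i), apply the inversion map $X\mapsto X^{(-1)}$ to the identity. The left side $AA^{(-1)}$ is fixed, because inversion is an anti-automorphism and $G$ is abelian in the intended setting; if $G$ is non-abelian, we instead note that $\Delta(A)$ is closed under negation as a multiset, which is all we need. The terms $\epsilon$ and $G$ are also fixed. Comparing the two sides gives $(\lambda-\mu)A=(\lambda-\mu)A^{(-1)}$. Since $\lambda\neq\mu$, this yields $A=A^{(-1)}$, that is, $A=-A$. In multiset language: the multiplicity of $g$ in $\Delta(A)$ equals the multiplicity of $-g$. That multiplicity is $\lambda$ or $\mu$ according to membership in $A$, so $g\in A^*$ if and only if $-g\in A^*$, and the identity is trivially symmetric.

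For (ii), assume $A=-A$ and treat each set by a direct group-ring computation.
\begin{itemize}
\item For $A\setminus\{0\}$ and $A\cup\{0\}$, write $B=A\mp\epsilon$ (only when $0\in A$, respectively $0\notin A$). Expanding gives $BB^{(-1)}=AA^{(-1)}\mp 2A+\epsilon$, using $A^{(-1)}=A$. The non-identity part is then $\lambda A^*+\mu(G^*\setminus A)\mp2A^*$. Relative to $B^*=A^*$, this is a PDS with parameters $\lambda\mp2$ and $\mu$.
\item For the complement $C=G\setminus A=G-A$, we have $CC^{(-1)}=vG-2kG+AA^{(-1)}$. Since $G=\epsilon+A^*+(G^*\setminus A)+\delta\epsilon$, the non-identity part is $(v-2k+\lambda)A^*+(v-2k+\mu)(G^*\setminus A)$. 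Since $G^*\setminus A=C^*$ and $A^*=G^*\setminus C$, $C$ is a PDS with $\lambda'=v-2k+\mu$ and $\mu'=v-2k+\lambda$.
\item The last two sets are obtained by applying the first bullet to $C$. This is valid because $C=-C$ follows from $A=-A$ and $G=-G$.
\end{itemize}

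The main obstacle is bookkeeping rather than any conceptual difficulty. The issue is whether $0$ lies in $A$ or in $C$, and the cases where the set is empty or all of $G$, where the PDS condition is degenerate. Where needed we will state these separately, and note that the edge cases still satisfy the equation vacuously.
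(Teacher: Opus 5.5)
Your proof is correct. The paper gives no proof of this proposition; it quotes it from \cite{HucJoh, Ma94}. Your group-ring computation is essentially the standard argument from those sources. For (i), invariance of $AA^{(-1)}$ under inversion forces $(\lambda-\mu)A=(\lambda-\mu)A^{(-1)}$. For (ii), the identities $(A\mp\epsilon)(A\mp\epsilon)^{(-1)}=AA^{(-1)}\mp 2A+\epsilon$ and $(G-A)(G-A)^{(-1)}=(v-2k)G+AA^{(-1)}$ give the parameters $(v,k\mp1,\lambda\mp2,\mu)$ and $(v,v-k,v-2k+\mu,v-2k+\lambda)$ respectively.

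Three small remarks:
\begin{itemize}
\item The decomposition should read $G=\epsilon+A^*+(G^*\setminus A)$, with no extra $\delta\epsilon$. This slip is harmless, since you only read off non-identity coefficients.
\item The abelian caveat in (i) is unnecessary. Because inversion is an anti-automorphism, $(AA^{(-1)})^{(-1)}=(A^{(-1)})^{(-1)}A^{(-1)}=AA^{(-1)}$ in any group.
\item The complement step does not use the hypothesis $A=-A$ at all; it is needed only for adding or removing $0$.
\end{itemize}
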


The following definition is introduced in \cite{AnbKalMei}:
\begin{definition}
Let $G$ be an abelian group of order $v$, and let $D$ be a subset of $G$ of size $k$. Then $D$ is a $(v,k,\lambda,\mu)$ \emph{skew partial difference set} in $G$ if there exists a $(v,k,\lambda,\mu)$ partial difference set $A \subseteq G$ such that the following multiset equation holds:
$$\Delta(D)=\lambda A^* + \mu (G^*\setminus A).$$
\end{definition}
We will say that $D$ is a skew PDS \emph{corresponding to} $A$, and if $A$ is a PDS of a particular type, we will say that $D$ is a skew PDS of this type.

We have the following trivial examples, noted in \cite{AnbKalMei}:
\begin{example}
\begin{itemize}
\item[(i)] Any $(v,k,\lambda,\mu)$ partial difference set $A \subseteq G$ is a skew PDS corresponding to $A$.
\item[(ii)] Any additive translate $a+A$ of $(v,k,\lambda,\mu)$ partial difference set $A \subseteq G$ (where $a \in G$) is a skew PDS corresponding to $A$.
\end{itemize}
\end{example}

The following results are established in \cite{AnbKalMei}: 
\begin{theorem}\label{thm:skewPDScomp}
Let $D$ be a skew PDS in an abelian group $G$.
\begin{itemize}
\item[(i)] If $\Delta(D)=\lambda A+ \mu(G^* \setminus A)$ then $A$ and $A \cup \{0\}$ are PDSs.  Exactly one of them has the same parameters as $D$ and is the PDS corresponding to $D$. 
\item[(ii)] If $D$ is a skew PDS corresponding to the $(v, k,  \lambda, \mu)$-PDS $A$, then $G \setminus D$ is also a skew PDS corresponding to  the $(v, v- k, v -2k  + \mu, v -2k + \lambda)$-PDS $G \setminus A$. 
\end{itemize}
\end{theorem}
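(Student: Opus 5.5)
The natural tool is the group ring $\mathbb{Z}[G]$. For $S \subseteq G$ write $S$ also for $\sum_{s\in S} s$, and $S^{(-1)}=\sum_{s\in S}(-s)$. Then $\Delta(D)$ corresponds to $DD^{(-1)}-|D|\cdot 0$, where $0$ denotes the identity element of $G$ viewed in the group ring. Both parts then reduce to comparing coefficients in identities of the form
$$DD^{(-1)} = k\cdot 0 + \lambda A' + \mu(G - 0 - A'),$$
where $A'=A\setminus\{0\}$. Throughout I use that $G$ is abelian, and that applying the trivial character (augmentation) to an identity counts elements.

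\emph{Part (i).} Suppose $\Delta(D)=\lambda A+\mu(G^*\setminus A)$ with $|D|=k$, and implicitly $A\subseteq G^*$. Counting elements of the multiset gives
$$k(k-1)=\lambda|A|+\mu(v-1-|A|).$$
The multiset $\Delta(D)$ is closed under negation, so the multiplicity function is symmetric. If $\lambda\neq\mu$ this forces $A=-A$. If $\lambda=\mu$, then $A$ is irrelevant and may be chosen symmetric.

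Next apply a nontrivial character $\chi$. Since $\chi(D)\overline{\chi(D)}=|\chi(D)|^2$, we get
$$|\chi(D)|^2 = k-\mu+(\lambda-\mu)\chi(A),$$
using $\chi(G)=0$. So $\chi(A)$ is real and equals $(|\chi(D)|^2-k+\mu)/(\lambda-\mu)$. The hardest step is recovering the PDS property of $A$ without any direct combinatorial access to $A$.

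My first attempt is to show that $\chi(A)$ takes at most two values, determined by a quadratic. Write $t=\chi(D)$. The PDS condition on $A$ (or on $A\cup\{0\}$) amounts to $\chi(A)$ satisfying the quadratic $x^2-(\lambda'-\mu')x-(k'-\mu')=0$ for suitable parameters $\lambda',\mu',k'$. Since I only control $|t|^2$, I would instead aim to use the hypothesis as stated in the paper's setup, where $A$ is assumed to be a PDS up to the identity element. If the statement is read as in \cite{AnbKalMei}, the existence of a PDS is part of the definition. In that case part (i) only asks which of $A$ and $A\cup\{0\}$ has size $k$.

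For that reduced claim, the counting identity above, together with the PDS parameter relation $k'(k'-1)=\lambda' k'+\mu'(v-1-k')$ for the true PDS, pins down $|A|$ or $|A|+1$ as $k$. That they are both PDSs then follows from Proposition (ii) on adding or removing $0$.

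\emph{Part (ii).} In the group ring, $(G-D)(G-D)^{(-1)} = vG - 2kG + DD^{(-1)}$, because $GD^{(-1)}=kG$. Now substitute $DD^{(-1)}=k\cdot 0+\lambda A+\mu(G-0-A)$, with $A$ taken without $0$ if $0\notin A$ (and handled similarly otherwise). Subtracting $(v-k)\cdot 0$ gives
$$\Delta(G\setminus D)=(v-2k+\mu)(G\setminus A)^* + (v-2k+\lambda)A^*.$$
By Proposition (ii), $G\setminus A$ is a PDS, and the same computation applied to $A$ itself shows it has exactly these parameters. Hence $G\setminus D$ is a skew PDS corresponding to $G\setminus A$, which is a routine coefficient comparison.
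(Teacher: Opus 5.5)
The paper gives no proof of this theorem; it is quoted from \cite{AnbKalMei}, so your argument has to stand on its own.

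\textbf{Part (ii)} is correct and complete. You have $(G-D)(G-D)^{(-1)}=(v-2k)G+DD^{(-1)}$. Writing $G=0+A^*+(G^*\setminus A)$ in $\mathbb{Z}[G]$ gives identity coefficient $v-k$ and $\Delta(G\setminus D)=(v-2k+\mu)(G\setminus A)^*+(v-2k+\lambda)A^*$. Running the same computation with $D$ replaced by $A$ shows that $G\setminus A$ is a $(v,v-k,v-2k+\mu,v-2k+\lambda)$-PDS. That second computation is all you need. The appeal to Proposition (ii) is superfluous, and it would in any case require $A=-A$, which can fail when $\lambda=\mu$.

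\textbf{Part (i)} has a gap. You reduce it to deciding ``which of $A$ and $A\cup\{0\}$ has size $k$''. But the claim concerns the arbitrary set $A\subseteq G^*$ in the displayed equation, whereas the definition of a skew PDS only supplies \emph{some} PDS $A_0$ with $|A_0|=k$ and $\Delta(D)=\lambda A_0^*+\mu(G^*\setminus A_0)$. Proposition (ii) says nothing about $A$ until you know $A=A_0\setminus\{0\}$. Your counting identity only gives $|A|=|A_0^*|$, which does not identify the two sets. The phrase ``$A$ is assumed to be a PDS up to the identity element'' assumes exactly what has to be shown.

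The missing step is one line. If $\lambda\neq\mu$, then $A$ and $A_0^*$ are both the set of elements of $G^*$ occurring exactly $\lambda$ times in $\Delta(D)$, so they are equal. Then $A_0=-A_0$ by Proposition (i), and Proposition (ii) makes $A=A_0\setminus\{0\}$ and $A\cup\{0\}=A_0\cup\{0\}$ PDSs. One of them is $A_0$ itself (size $k$, the parameters of $D$); the other has size $k\pm 1$.

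The hypothesis $\lambda\neq\mu$ is genuinely needed. Your remark that for $\lambda=\mu$ the set $A$ ``may be chosen symmetric'' does not help. For example, $D=\{1,2,4\}\subseteq\mathbb{Z}_7$ satisfies $\Delta(D)=1\cdot A+1\cdot(G^*\setminus A)$ with $A=\{1,2\}$, yet neither $\{1,2\}$ nor $\{0,1,2\}$ is a PDS. So (i) must be read with $\lambda\neq\mu$.

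The character detour can be dropped, and you were right to abandon it. Without the skew-PDS hypothesis the conclusion is false: $D=\{0,1,3\}\subseteq\mathbb{Z}_9$ has $\Delta(D)=\{1,2,3,6,7,8\}$ with each element once, but neither $\{1,2,3,6,7,8\}$ nor $\{0,1,2,3,6,7,8\}$ is a PDS. So (i) can only be proved by comparing with the PDS supplied by the definition, as above.
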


\begin{remark}
Let $D$ be a skew PDS corresponding to the PDS $A$. We have the following ``negative" results.   The property $D=-D$ does not necessarily hold. $G^* \setminus D$ is not a skew PDS in general.  If $0 \notin D$, then $D \cup \{0\}$ is not necessarily a skew PDS; moreover if $0 \in D$, then $D \setminus \{0\}$ is not necessarily a skew PDS. 
\end{remark}

\subsection{PDSs from cyclotomic classes}

Let $\mathbb{F}_q$ be the finite field of $q$ elements where $q$ is a prime power, and let $\alpha$ be a primitive element of $\mathbb{F}_q$.  Let $q=ef+1$.  Then the \emph{cyclotomic classes of order e} in $\mathbb{F}_q$ are the multiplicative subgroups
$$ C_i^e= \alpha^i \langle \alpha^e \rangle$$
where $0 \leq i \leq e-1$. Each class has cardinality $f$.

It is well-known that difference sets and partial difference sets can be obtained by taking single cyclotomic classes, or unions of such classes, under certain conditions.  The first such result is due to Paley.

\begin{proposition}
Let $q$ be an odd prime power.  In $\mathbb{F}_q$,    the set of non-zero squares $C_0^2$ forms a:
\begin{itemize}
\item[(i)] $(q,(q-1)/2,(q-5)/4,(q-1)/4)$-PDS when $q \equiv 1 \mod 4$;
\item[(ii)] $(q,(q-1)/2,(q-3)/2)$-DS when $q \equiv 3 \mod 4$.
\end{itemize}
This is also true for the set of non-zero nonsquares $C_1^2$.
\end{proposition}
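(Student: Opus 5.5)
The plan is to count, for each nonzero $d \in \mathbb{F}_q$, the number $N(d)$ of ordered pairs $(x,y)$ with $x,y \in C_0^2$ and $x-y=d$. Everything rests on one observation: multiplying by a nonzero square $s$ is a bijection of $C_0^2$ onto itself. So the map $(x,y)\mapsto(sx,sy)$ turns representations of $d$ into representations of $sd$, and $N(sd)=N(d)$. Hence $N$ takes one value $N_0$ on $C_0^2$ and one value $N_1$ on $C_1^2$. This already shows that $\Delta(C_0^2)$ has the shape $N_0 (C_0^2) + N_1 (C_1^2)$. It remains to find $N_0$ and $N_1$ in the two cases.

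\emph{Case $q\equiv 3 \bmod 4$.} Here $-1$ is a nonsquare, so $x\mapsto -x$ swaps $C_0^2$ and $C_1^2$. Since $N(-d)=N(d)$ (swap $x$ and $y$), we get $N_0=N_1$. The total number of ordered pairs of distinct elements is $k(k-1)$ with $k=(q-1)/2$. Dividing by the $q-1$ nonzero elements gives $N_0=N_1=(q-3)/4$. (I would read the stated third parameter of the DS as meaning this value.)

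\emph{Case $q\equiv 1 \bmod 4$.} Now $-1$ is a square, so the symmetry argument gives no information and a second relation between $N_0$ and $N_1$ is needed. The counting relation is $\tfrac{q-1}{2}(N_0+N_1)=k(k-1)$, which gives $N_0+N_1=(q-3)/2$. For the second relation I would compute $N(1)$ directly. It is the number of $x\ne 0,1$ with both $x$ and $x-1$ nonzero squares, i.e.\ the cyclotomic number $(0,0)$ of order $2$. I would evaluate it with the quadratic character $\eta$ as
\[
N(1)=\frac14\sum_{x\neq 0,1}\bigl(1+\eta(x)\bigr)\bigl(1+\eta(x-1)\bigr),
\]
using $\sum_{x}\eta(x)=0$ and the standard identity $\sum_x \eta(x(x-1))=-1$. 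This gives $N_0=(q-5)/4$, and hence $N_1=(q-1)/4$. These are exactly the values $\lambda=(q-5)/4$ and $\mu=(q-1)/4$.

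For $C_1^2$, the map $x\mapsto \alpha x$ carries $C_0^2$ onto $C_1^2$ and multiplies every difference by $\alpha$, so the two roles are swapped. When $q\equiv 1 \bmod 4$, the elements of $C_1^2$ are then covered $(q-5)/4$ times and the squares $(q-1)/4$ times, giving the same parameters. When $q\equiv 3\bmod 4$, the covering is uniform anyway. The only step that is not formal is the character-sum evaluation $\sum_x\eta(x^2-x)=-1$. I would prove it by completing the square (for odd $q$) to reduce it to $\sum_x \eta(x^2-c)$ with $c\neq 0$. That sum equals $-1$ by counting the points on the conic $y^2=x^2-c$, which has exactly $q-1$ points.
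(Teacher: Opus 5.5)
Your proof is correct and complete. The paper gives no proof of this proposition; it quotes it as Paley's classical result. Your argument is the standard one, and in the paper's language it is the cyclotomic framework specialised to $e=2$:
\begin{itemize}
\item your observation that multiplication by a nonzero square permutes $C_0^2$, so that $N$ is constant on $C_0^2$ and on $C_1^2$, is the content of Lemma~\ref{lemma:cyc}(i) for $e=2$;
\item your character-sum computation evaluates $N(1)=(0,0)_2$;
\item the transfer to $C_1^2$ via $x\mapsto\alpha x$ is the same scaling device the paper uses in Theorem~\ref{thm:C0C1}.
\end{itemize}

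Two small remarks. First, when you expand $\frac14\sum_{x\neq 0,1}(1+\eta(x))(1+\eta(x-1))$, the two linear sums equal $-\eta(1)$ and $-\eta(-1)$. So besides $\sum_x\eta(x)=0$ you are also using $\eta(-1)=1$, and this is the only place the hypothesis $q\equiv 1 \bmod 4$ enters the computation. For $q\equiv 3 \bmod 4$ the same calculation gives $N(1)=(q-3)/4$, so the character-sum route actually handles both cases uniformly, without the separate symmetry argument.

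Second, your reading of part (ii) is right: the stated third parameter $(q-3)/2$ is a typo. The relation $k(k-1)=\lambda(q-1)$ with $k=(q-1)/2$ forces $\lambda=(q-3)/4$. For example, $\{1,2,4\}$ is a $(7,3,1)$-DS in $\mathbb{F}_7$.
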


If $q \equiv 1 \mod 4$, then $-1$ is a square in $\mathbb{F}_q$, so the PDSs $C_0^2$ and $C_1^2$ are both regular.  

\begin{definition}
A regular $(v,(v-1)/2,(v-5)/4,(v-1)/4)$-PDS with $v \equiv 1 \mod 4$ is called a \emph{Paley} type PDS.
\end{definition}

Another type of PDS comprising a single cyclotomic class of a finite field is the following (see Theorem 4.3 of \cite{HucJoh}).

\begin{theorem}\label{thm:C_0^4PDS}
Let $q=4f+1$ be a prime power, such that $q=x^2+4y^2$ is the proper representation of $q$ with $x \equiv 1 \mod 4$.  Then $C_0^4$ is a PDS if and only if $y=0$.
The parameters are $(q,(q-1)/4,(q-11-6x)/16,(q-3+2x)/16)$.
This holds precisely when $q=p^m$, $p \equiv 3 \mod 4$ and $m$ is even.
\end{theorem}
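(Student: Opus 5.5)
We approach Theorem~\ref{thm:C_0^4PDS} through cyclotomic numbers of order $4$. Put $D=C_0^4$; as $q=4f+1$, $-1\in C_0^4$ exactly when $f$ is even. First we note that $\Delta(D)$ is invariant under multiplication by $C_0^4$. Hence each class $C_i^4$ occurs in $\Delta(D)$ with a constant multiplicity $n_i$. The number of ways to write $\alpha^i$ as $x-y$ with $x,y\in C_0^4$ equals the number of $y\in C_0^4$ with $y+\alpha^i\in C_0^4$, which is the cyclotomic number $(i',0)_4$ for a suitable index; after normalising by $y$ it is $(-i,-i)$-type. Concretely, $n_i=(i,0)_4$ up to the standard symmetries. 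So $D$ is a PDS if and only if $n_1=n_2=n_3$, with $\lambda=n_0$ and $\mu=n_1$. (For $f$ odd, $-1\in C_2^4$ and $D\neq -D$; then Proposition (i) forces $D$ to be a difference set, and we handle that case separately.)

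Second, we invoke the classical Gauss--Dickson evaluation of the cyclotomic numbers of order $4$, in terms of $q=x^2+4y^2$ with $x\equiv 1 \bmod 4$. For $f$ even:
\[
16(0,0)=q-11-6x,\quad 16(0,1)=q-3+2x+8y,\quad 16(0,2)=q-3+2x,\quad 16(0,3)=q-3+2x-8y.
\]
We must track the sign convention for $y$, which depends on the choice of $\alpha$; only $|y|$ matters here. The multiplicities $n_1,n_2,n_3$ are therefore $(q-3+2x\pm 8y)/16$ and $(q-3+2x)/16$. They coincide iff $y=0$, which gives exactly the stated $\lambda=(q-11-6x)/16$ and $\mu=(q-3+2x)/16$. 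For $f$ odd, the order-$4$ tables give $n_1\ne n_3$ unless degenerate cases occur. Those cases are excluded because a difference set with these parameters would require $f(f-1)=\lambda(q-1)$, and this fails by a size/integrality check. Hence $y\neq 0$ is forced there; in fact $y=0$ cannot occur, since $q\equiv 5 \bmod 8$ gives $y$ odd.

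Third, we characterise $y=0$ arithmetically. We have $y=0$ iff $q$ is a perfect square $x^2$ with the representation proper; here ``proper'' means the one with $\gcd(x,p)=1$ when $p\equiv 1\bmod 4$. If $p\equiv 1 \bmod 4$, then $p=\pi\bar\pi$ in $\mathbb{Z}[i]$, and the proper representation comes from $\pi^m$, whose imaginary part is nonzero; here we use that $\pi/\bar\pi$ is not a root of unity. If $p\equiv 3\bmod 4$, then $p$ is inert in $\mathbb{Z}[i]$, so $q=p^m\equiv 1 \bmod 4$ forces $m$ even, and the only representation is $q=(\pm p^{m/2})^2+0$. Thus $y=0$ exactly when $p\equiv 3\bmod 4$ and $m$ is even.

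The main obstacle is the second step. The correct version of the cyclotomic-number formulas for prime powers (not just primes) must be pinned down, together with the meaning of ``proper representation'' and the sign of $x$. We would rely on the standard references (Storer, or Berndt--Evans--Williams on Jacobi sums of order $4$). The $f$ odd case also needs care.
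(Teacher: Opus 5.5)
\textbf{Verdict: genuine gap in the case $f$ odd.} Your argument for $f$ even is sound. The paper gives no proof of its own here (it cites Theorem 4.3 of \cite{HucJoh}). Your route is the natural one with the paper's tools: Lemma \ref{lemma:cyc}(i) gives $n_i=(i,0)_4$ exactly, not merely up to symmetry. The table of Theorem \ref{thm:KaRaThm} with $s=x$, $t=\pm 2y$ then shows $n_1=n_2=n_3$ iff $y=0$, with the stated $\lambda,\mu$. Your characterisation of $y=0$ is also fine, though the Gaussian-integer argument for $p\equiv 1 \bmod 4$ is unnecessary. Properness means $p\nmid x$, while $y=0$ would force $x^2=p^m$ and hence $p\mid x$.

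\textbf{The gap.} For $f$ odd, Theorem \ref{thm:KaRaThm} gives
\[
(0,0)_4=(2,0)_4=\tfrac{q-7+2x}{16}, \qquad (1,0)_4=(3,0)_4=\tfrac{q-3-2x}{16}.
\]
So $n_1=n_3$ always, contrary to your claim, and
\[
\Delta(C_0^4)=\tfrac{q-7+2x}{16}\,C_0^2+\tfrac{q-3-2x}{16}\,C_1^2 .
\]
Hence $C_0^4$ is a difference set exactly when $x=1$, and this genuinely occurs. The counting condition $f(f-1)=4f\lambda$ only requires $f\equiv 1 \bmod 4$, and with $\lambda=(q-5)/16=(f-1)/4$ it holds identically. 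For example, $q=37=1^2+4\cdot 3^2$ gives Chowla's $(37,9,2)$ difference set of quartic residues. The same happens for $q=101$ and $q=197$, and trivially for $q=5$, where $C_0^4=\{1\}$. So your size/integrality check excludes nothing.

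Under the paper's own definition of PDS, which allows $\lambda=\mu$, the ``only if'' direction of the statement is false for these $q$, since $y\neq 0$ there.

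\textbf{How to repair it.} The statement must be read as referring to PDSs with $\lambda\neq\mu$. The stated parameters indicate this, since $\lambda-\mu=-(1+x)/2\neq 0$ for $x\equiv 1 \bmod 4$. With that reading, the case $f$ odd needs no cyclotomic computation. Since $-1=\alpha^{2f}\in C_2^4$, we have $C_0^4\neq -C_0^4$, so Proposition (i) of the paper rules out a PDS with $\lambda\neq\mu$ immediately. Replace your difference-set exclusion by this observation, and state the Chowla exception ($f$ odd, $x=1$) explicitly rather than asserting it cannot occur.
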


\begin{definition}
\begin{itemize}
\item[(i)] A PDS with parameters $(n^2,r(n-1),n+r^2-3r,r^2-r)$ is said to be of \emph{Latin square} type.
\item[(ii)] A PDS with parameters $(n^2,r(n+1),-n+r^2+3r, r^2+r)$ is said to be of \emph{negative Latin square} type. 
\end{itemize}
\end{definition}

When $q=p^m$ in Theorem \ref{thm:C_0^4PDS} with $p \equiv 3 \mod 4$ and $m/2$ an odd integer, then $C_0^4$ is a Latin square type PDS with parameters $(n^2,r(n-1),n+r^2-3r,r^2-r)$ where $n=p^{m/2}$ and  $r=(p^{m/2}+1)/4$.  Here $x=-p^{m/2}$.
If $m/2$ is an even integer, then $C_0^4$ is of negative Latin square type PDS with parameters $(n^2,r(n+1),-n+r^2+3r, r^2+r)$ where $n=p^{m/2}$ and  $r=(p^{m/2}-1)/4$.  Here $x=p^{m/2}$.  In \cite{AnbKalMei}, Latin square type skew PDSs in $\mathbb{F}_{p^m} \times \mathbb{F}_{p^m}$ are presented, where $n=p^m$, $r=p^{m-k}$ and $k$ is a divisor of  $m$.

\begin{example}
\begin{itemize}
\item[(i)] For $q=9=3^2$, $C_0^4$ is a $(9,2,1,0)$-PDS of Latin square type.
\item[(ii)] For $q=81=3^4$, $C_0^4$ is an $(81,20,1,6)$-PDS of negative Latin square type.
\item[(iii)] For $q=361=19^2$, $C_0^4$ is a $(361,90,29,20)$-PDS of Latin square type.
\end{itemize}
\end{example}

There are results in the literature stating conditions under which various $C_0^e$ ($e \leq 8$) form DSs or PDSs \cite{HucJoh, Ma84, Ma94, MomWanXia, Sto}.  It is conjectured that all PDSs formed of a single cyclotomic class take one of three forms: subfield, semi-primitive case or sporadic \cite{MomWanXia}.

It is straightforward to show that no single cyclotomic class of given order can be a skew PDS corresponding to a PDS that is a cyclotomic class of the same order. 

\begin{proposition}\label{prop:noC_iskewPDS}
For any proper partial difference set $A \subseteq \mathbb{F}_q$ of the form $C_0^e$ for some $e|q-1$, there is no $C_i^e$ ($0<i<e$) which is a skew PDS corresponding to $A$.
\end{proposition}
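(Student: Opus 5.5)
Write $A=C_0^e$ and suppose, for contradiction, that $D=C_i^e$ with $0<i<e$ is a skew PDS corresponding to $A$. The idea is that $D$ is a multiplicative translate of $A$: $C_i^e=\alpha^i C_0^e$. Multiplication by the nonzero scalar $\alpha^i$ is an additive automorphism of $\mathbb{F}_q$, so it transports difference multisets:
$$\Delta(\alpha^i A)=\alpha^i\Delta(A).$$
Since $A$ is a $(q,f,\lambda,\mu)$-PDS, we get
$$\Delta(C_i^e)=\alpha^i\bigl(\lambda A^*+\mu(\mathbb{F}_q^*\setminus A)\bigr)=\lambda C_i^e+\mu(\mathbb{F}_q^*\setminus C_i^e).$$
Here we use that $0\notin A$, so $A^*=A$. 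If $D$ were also a skew PDS corresponding to $A$, then
$$\Delta(D)=\lambda C_0^e+\mu(\mathbb{F}_q^*\setminus C_0^e).$$

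We compare multiplicities of nonzero elements in these two descriptions. An element of $C_0^e$ has multiplicity $\lambda$ in the second expression. In the first it has multiplicity $\mu$, because $C_0^e\cap C_i^e=\emptyset$ for $i\neq 0$. Hence $\lambda=\mu$.

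It remains to use properness to exclude $\lambda=\mu$. The paper's notion of proper is that $A$ is a genuine PDS and not a difference set, i.e.\ $\lambda\neq\mu$, and this immediately gives the contradiction. The only subtlety is the degenerate case $e=1$, where no $i$ with $0<i<e$ exists, so the statement is vacuous.

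If instead ``proper'' only means nontrivial (not $\emptyset$, not all of $G^*$, etc.), I would handle a difference set $C_0^e$ separately. In that case $C_i^e$ is also a difference set with the same parameters, so the multiset equation would hold and the claim would fail. Thus I expect the intended meaning to be $\lambda\neq\mu$, consistent with part (i) of the earlier proposition on PDSs, where non-DS PDSs are the relevant ones. The main step to check carefully is therefore just this interpretation of ``proper'' and the disjointness of distinct cyclotomic classes; the rest is the automorphism argument.
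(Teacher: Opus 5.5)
Your proposal is correct and is essentially the paper's argument: the paper likewise writes $\Delta(C_i^e)=\lambda C_i^e+\mu(\mathbb{F}_q^*\setminus C_i^e)$ (the image of $\Delta(C_0^e)$ under multiplication by $\alpha^i$) and observes that this cannot equal $\lambda C_0^e+\mu(\mathbb{F}_q^*\setminus C_0^e)$ when $0<i<e$. Your reading of ``proper'' as $\lambda\neq\mu$ is exactly the one the paper's proof uses, and your explicit comparison of multiplicities on the nonempty, disjoint class $C_0^e$ simply makes the paper's final step precise.
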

\begin{proof}
Since it is a proper PDS,
$$ \Delta(C_0^e)= \lambda C_0^e + \mu (\cup_{j \neq 0}C_j)$$
where $\lambda \neq \mu$.  Since
$$ \Delta(C_i^e)= \lambda C_i^e + \mu (\cup_{j \neq 0} C_{i+j})$$
it is not possible for the multiset $\Delta(C_i^e)$ to equal $\Delta(C_0^e)$ when $0<i<e$.
\end{proof}

One way to view the above result is that, if $C_0^e$ is a PDS, then $C_i^e$ is a PDS for all $0<i<e$.  Results in the literature which guarantee that cyclotomic classes form PDSs are in some sense ``in tension" with attempts to construct skew PDSs.  In a similar vein,  the following useful result \cite{CalKan, HucJoh, Ma94}, on constructing PDSs from unions of cyclotomic classes,  again rules-out the possibility that such unions could form skew PDSs.

\begin{theorem}
Let ${\rm GF}(q')$ be a finite field of order $q^{\prime} = q^{2\beta}$, where $\beta \in \mathbb{N}$ and $q$ is a power of a prime p. Let $e\mid{q+1}$ and set $\eta = \left(\frac{(-q)^{\beta}-1}{e}\right)$. For any $I \subset \{0,1,\ldots,e-1\}$, ($|I|=u, 2 \leq u \leq e-1$) where $\mathcal{D}^{\prime} = \{C_i^e\}_{i \in I}$, 
\begin{itemize}
\item[(i)] each $C_i^{e}$ is a (regular) $(q^{\prime},\frac{q^{\prime}-1}{e},\eta^2-(e-3)\eta-1,\eta^2+\eta)$-PDS;
\item[(ii)]  $D=\cup_{i \in I} C_i^e$ is a (regular) $(q^{\prime}, \frac{u(q^{\prime}-1)}{e}, u^2 \eta^2+(3u-e)\eta -1, u^2 \eta^2+u \eta)$-PDS, which is proper except when $\eta=1=2u-e$ or $\eta=-1=2u-e$.
\end{itemize}
\end{theorem}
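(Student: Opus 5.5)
The approach is the standard character-theoretic one for cyclotomic PDSs in the semi-primitive case. We first check regularity: each $C_i^e$ satisfies $C_i^e=-C_i^e$ and $0\notin C_i^e$. Indeed, $-1\in C_0^e$, since either $q$ is even, or $f=(q^{2\beta}-1)/e$ is even because $e\mid q+1$ and $q-1$ is even. We then use the well-known criterion: a subset $D\subseteq \mathbb{F}_{q'}$ with $D=-D$ and $0\notin D$ is a $(v,k,\lambda,\mu)$-PDS if and only if every nontrivial additive character $\psi$ satisfies $\psi(D)\in\{r,s\}$, where $r+s=\lambda-\mu$ and $rs=\mu-k$. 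This follows by applying Fourier inversion to the group-ring equation $DD^{(-1)}=(k-\mu)\cdot 0+(\lambda-\mu)D+\mu G$.

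The key step is to compute the Gauss periods $\eta_i=\sum_{x\in C_i^e}\psi(x)$ for the canonical additive character $\psi$. Every nontrivial character has the form $x\mapsto\psi(bx)$, and $b\in C_j^e$ just permutes the indices cyclically, so $\psi$ suffices. Since $e\mid q+1$, we have $p^{j}\equiv -1 \pmod e$ for a suitable $j$; this is the semi-primitive situation. For this case the Baumert--Mills--Ward evaluation, which rests on Stickelberger's theorem, shows that all Gauss sums of order $e$ are pure: $g(\chi)=-(-q)^{\beta}$ up to a sign that depends only on $p$, $e$ and $\beta$. Writing $Q=(-q)^{\beta}$, so that $Q=e\eta+1$, substituting this into $\eta_i=\frac1e\sum_{\chi^e=1}\bar\chi(\alpha^i)g(\chi)$ gives the following: exactly one period (say $\eta_{i_0}$) equals $\eta-Q=\big(Q-1-eQ\big)/e$, and the other $e-1$ periods all equal $\eta$. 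As a sanity check, the periods must sum to $-1$, and indeed $(\eta-Q)+(e-1)\eta=-1$. This evaluation is the main obstacle, and I would cite it rather than rederive it. If forced to derive it, I would first establish purity of the Gauss sum via the Stickelberger congruence together with $p^j\equiv-1 \pmod e$, and then fix the sign using the Davenport--Hasse lifting theorem from $\mathbb{F}_{q^2}$ to $\mathbb{F}_{q^{2\beta}}$.

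Given these periods, take $D=\cup_{i\in I}C_i^e$ with $|I|=u$. Every nontrivial character takes on $D$ a value that is a sum of $u$ periods. At most one of these periods can be the exceptional one, so the value is either $u\eta$ or $u\eta-Q$. This gives two values $r=u\eta$ and $s=u\eta-Q$ with $k=u(q'-1)/e$. The parameters then follow by direct computation. First, $\lambda-\mu=r+s=2u\eta-Q=(2u-e)\eta-1$. Second, $\mu=k+rs=k+u\eta(u\eta-Q)$, which simplifies to $u^2\eta^2+u\eta$ using $q'=Q^2$ and $Q=e\eta+1$. These two identities give the stated formulas, and the case $u=1$ gives part (i).

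Finally, for properness we need $D$ not to be a difference set and not to be trivial (empty, $G^*$, and so on). The set is a difference set exactly when $\lambda=\mu$, that is, when $(2u-e)\eta=1$. For integers this forces $\eta=\pm1$ and $2u-e=\eta$, which are precisely the stated exceptions. The range $2\le u\le e-1$ rules out the trivial unions.
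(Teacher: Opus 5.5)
\textbf{Verdict: your proof is correct.} The paper does not prove this theorem; it quotes it as background from the cited literature, so there is no in-paper proof to compare with. Your argument is the standard semiprimitive-case proof, and every step checks out.
\begin{itemize}
\item Regularity holds as you argue.
\item The converse direction of the character criterion works: the coefficient of $0$ in $DD^{(-1)}$ is $k$, and this forces the multiple of $G$ to be $k+rs$.
\item Reducing to the canonical character is valid, and on a union of $u$ classes the character takes only the two values $u\eta$ and $u\eta-Q$.
\item The algebra is right. Since $q'-1=(Q-1)(Q+1)=e\eta(e\eta+2)$, you get $\mu=u\eta(e\eta+2)+u\eta(u\eta-e\eta-1)=u^2\eta^2+u\eta$.
\item This paper uses ``proper'' to mean $\lambda\neq\mu$. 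In that sense the exceptions are exactly the integer solutions of $(2u-e)\eta=1$, namely the two stated cases.
\end{itemize}

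\textbf{One claim in your sketch of the period evaluation is wrong.} This matters even though you intend to cite that step. The formula $\eta_i=\frac1e\sum_{\chi^e=1}\bar\chi(\alpha^i)g(\chi)$ needs $g(\chi)$ for every $\chi$ with $\chi^e=1$, not only those of exact order $e$. The sign of $g(\chi)$ depends on the order of $\chi$, not only on $p$, $e$ and $\beta$.
\begin{itemize}
\item Example: take $q=3$, $e=4$, $\beta=1$, so $Q=-3$. In $\mathbb{F}_9$ one finds $g(\chi)=-3=Q$ for $\chi$ of order $4$, but $g(\chi^2)=3=-Q$ for the quadratic character.
\item General case: when $p$, $\beta$ and $(p^j+1)/e$ are all odd, $g(\chi^k)=(-1)^{k+1}Q$ for $1\le k\le e-1$.
\item Why a uniform sign fails: a uniform sign $\epsilon$ would make the non-exceptional periods equal $(\epsilon Q-1)/e$. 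For $\epsilon=-1$ and $e\ge 3$ this is not even an integer.
\item What actually happens: the alternating sign produces the correct distribution, with the exceptional period at $\eta_{e/2}$ rather than $\eta_0$.
\end{itemize}
So cite the Gauss-period evaluation directly: one period equals $\eta-Q$ and the other $e-1$ equal $\eta$. That is all your argument uses, and it holds in every case. Your ``say $\eta_{i_0}$'' already allows for the exceptional index moving.

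\textbf{An alternative route.} The machinery of this paper suggests a counting proof instead.
\begin{itemize}
\item Write $\Delta(D)=\sum_{i\in I}\Delta(C_i^e)+\sum_{i\neq l}\Delta(C_i^e,C_l^e)$ and evaluate it with Lemma~\ref{lemma:cyc}.
\item Use the uniform cyclotomic numbers: $(0,0)_e=\eta^2-(e-3)\eta-1$; $(i,0)_e=(0,i)_e=(i,i)_e=\eta^2+\eta$ for $i\neq 0$; and $(i,j)_e=\eta^2$ for distinct nonzero $i,j$.
\end{itemize}
That route is more explicit but requires a case count over ordered pairs from $I$. In your character argument, unions cost nothing because character values simply add. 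Both routes rest on the same semiprimitive Gauss sum evaluation.
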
 

As before, let $q=ef+1$ where $q$ is a prime power.  For each pair of cyclotomic classes $C_i^e$ and $C_j^e$, the cyclotomic number $(i,j)_e$ is defined to be the number of solutions to 
$$ z_i+1=z_j$$
where $z_i \in C_i^e$ and $z_j \in C_j^e$.

Cyclotomic numbers have been an object of study since Gauss \cite{Gau}, and can be useful in establishing properties of difference structures formed from cyclotomic classes. However, there are various limitations to their use, including the sign ambiguity associated with them, and the fact that they are known only for certain values of $e \leq 22$. In situations when cyclotomic number results are available, the following well-known result can be extremely useful (for a proof, see \cite{HucJoh}). 

\begin{lemma}\label{lemma:cyc}
Let $q=ef+1$ be a prime power.  For $0 \leq j,l \leq e-1$:
\begin{itemize}
\item[(i)] $\Delta(C_j^e)=\sum_{i=0}^{e-1} (i,0)_e C_{i+j}^e$
\item[(ii)] $\Delta(C_{j+l}^e,C_l^e)=\sum_{i=0}^{e-1} (i,j)_e C_{i+l}^e$.
\end{itemize}
\end{lemma}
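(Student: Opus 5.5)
The plan is to count solutions directly. I will work with (ii) first, since (i) is the special case $j=0$, $l=j$ read appropriately (or can be proved by the same argument).

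Fix $d \in \mathbb{F}_q^*$. The multiplicity of $d$ in $\Delta(C_{j+l}^e, C_l^e)$ is the number of pairs $(x,y)$ with $x \in C_{j+l}^e$, $y \in C_l^e$ and $x-y=d$. Write $d \in C_{i+l}^e$ for a unique $i$ (indices mod $e$). Dividing the equation by $d$ gives $x/d - y/d = 1$, that is, $y/d + 1 = x/d$. Here $y/d \in C_{l-(i+l)}^e = C_{-i}^e$ and $x/d \in C_{j-i}^e$.

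So the count equals the cyclotomic number $(-i,\, j-i)_e$. To match the stated form I then need the standard identity $(-i,j-i)_e=(i,j)_e$. I would prove it by the substitution $z \mapsto 1/z$ on solutions of $z_{-i}+1=z_{j-i}$: dividing by $z_{-i}$ gives $1+z_{-i}^{-1}=z_{j-i}z_{-i}^{-1}$. Now $z_{-i}^{-1}\in C_i^e$ and $z_{j-i}z_{-i}^{-1} \in C_j^e$, so this is a bijection onto the solutions of $z_i+1=z_j$.

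Checking the index bookkeeping in this last step is the only delicate point. In particular, I must confirm the coset multiplication rule $C_a^eC_b^e=C_{a+b}^e$ and $(C_a^e)^{-1}=C_{-a}^e$, both of which are immediate from $C_a^e=\alpha^a\langle\alpha^e\rangle$. No sign issue with $-1$ arises, because I divided by $d$ rather than negating. Summing over $i$, each element of $C_{i+l}^e$ appears $(i,j)_e$ times, which gives (ii). Zero never occurs as a difference, since $C_{j+l}^e$ and $C_l^e$ are disjoint unless $j=0$.

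For (i), the same computation applies with $x,y \in C_j^e$ and $x\neq y$. For $d\in C_{i+j}^e$, the quotients $y/d$ and $x/d$ lie in $C_{-i}^e$ and $C_{-i}^e$ respectively. This gives the count $(-i,-i)_e$, which equals $(i,0)_e$ by the same inversion bijection: from $z+1=w$ with $z,w\in C_{-i}^e$ we get $z^{-1}+1=wz^{-1}$, with $z^{-1}\in C_i^e$ and $wz^{-1}\in C_0^e$.

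The condition $x\neq y$ excludes exactly $d=0$, so the multiset over $\mathbb{F}_q^*$ is as claimed.
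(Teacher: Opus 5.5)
Your proof is correct. The paper gives no proof of this lemma (it defers to \cite{HucJoh}), so there is nothing in the paper to match it against. The standard argument normalizes by $y$ rather than by $d$. From $x-y=d$ with $x\in C_{j+l}^e$, $y\in C_l^e$, $d\in C_{i+l}^e$, dividing by $y$ gives $d/y+1=x/y$ with $d/y\in C_i^e$ and $x/y\in C_j^e$. The map $(x,y)\mapsto(d/y,x/y)$ is a bijection onto the solutions counted by $(i,j)_e$, with inverse $(u,v)\mapsto(dv/u,\,d/u)$, so the count $(i,j)_e$ appears immediately.

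Your two substitutions $(x,y)\mapsto(y/d,x/d)\mapsto(d/y,x/y)$ compose to exactly this map. Your detour through $(-i,j-i)_e=(i,j)_e$ is therefore unnecessary but harmless, and it establishes that standard symmetry of cyclotomic numbers along the way.

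One point to tighten concerns (ii) when $j=0$. The paper defines $\Delta(D_1,D_2)$ to include pairs with $x=y$, so $\Delta(C_l^e,C_l^e)$ contains $f$ copies of $0$, while $\sum_i (i,0)_e C_{i+l}^e$ contains none. Part (ii) as literally stated therefore fails for $j=0$. Your sentence ``Zero never occurs \ldots\ unless $j=0$'' notices this but leaves it unresolved. You should say explicitly that for $j=0$, (ii) holds only as an identity of multisets on $\mathbb{F}_q^*$; once the $f$ diagonal zeros are discarded, it is exactly (i). This matches your opening remark that (i) is the case $j=0$ ``read appropriately''. It is also harmless for the paper, which only ever applies (ii) to two distinct classes ($j\neq 0$).
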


\section{Cyclotomic skew PDSs using classes of order $4$}

In this section, we present the first construction for skew Paley PDSs, using cyclotomic classes of order $4$.  The necessary information regarding cyclotomic numbers of order $4$ may be found in the Appendix.  

\begin{theorem}\label{thm:C0C3}
Let $q$ be a prime power congruent to $5$ modulo $8$. Let $\alpha$ be a primitive element of $\mathbb{F}_q$. Let $q=s^2+t^2$ be the unique proper representation (with sign ambiguity resolved) as in Theorem \ref{thm:KaRaThm}. 

Then $D=C_0^4 \cup C_3^4$ is a Paley skew PDS corresponding to 
\begin{itemize}
\item[(i)] the Paley $(q,\frac{q-1}{2}, \frac{q-5}{4}, \frac{q-1}{4})$-PDS $C_0^2$, if $t=-2$;
\item[(ii)] the Paley $(q,\frac{q-1}{2}, \frac{q-5}{4}, \frac{q-1}{4})$-PDS $C_1^2$, if $t=2$.
\end{itemize}
\end{theorem}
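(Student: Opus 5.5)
The plan is a direct computation of the multiset $\Delta(D)$ in terms of cyclotomic classes of order $4$, using Lemma \ref{lemma:cyc} together with the explicit cyclotomic numbers of order $4$ recorded in the Appendix (Theorem \ref{thm:KaRaThm}). Since $q\equiv 5 \pmod 8$, we have $q=4f+1$ with $f$ odd. Hence $-1\in C_2^4$, and we must use the ``$f$ odd'' table of cyclotomic numbers. That table expresses every $(i,j)_4$ as one of five quantities $A,B,C,D,E$, each of the form $(q+c_1+c_2 s+c_3 t)/16$.

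First, I will split the multiset as
$$\Delta(D)=\Delta(C_0^4)+\Delta(C_3^4)+\Delta(C_0^4,C_3^4)+\Delta(C_3^4,C_0^4),$$
and evaluate each term with Lemma \ref{lemma:cyc}:
\begin{itemize}
\item $\Delta(C_0^4)=\sum_i (i,0)_4 C_i^4$;
\item $\Delta(C_3^4)=\sum_i (i,0)_4 C_{i+3}^4$;
\item $\Delta(C_0^4,C_3^4)=\Delta(C_{1+3}^4,C_3^4)=\sum_i (i,1)_4 C_{i+3}^4$, using $l=3$ and $j=1$;
\item $\Delta(C_3^4,C_0^4)=\sum_i (i,3)_4 C_i^4$, using $l=0$ and $j=3$.
\end{itemize}
Collecting terms gives, for each $k\in\{0,1,2,3\}$, the multiplicity of $C_k^4$ in $\Delta(D)$. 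It is
$$N_k=(k,0)_4+(k-3,0)_4+(k-3,1)_4+(k,3)_4,$$
with indices taken mod $4$.

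Second, I substitute the table values. A Paley skew PDS corresponding to $C_0^2=C_0^4\cup C_2^4$ requires
$$N_0=N_2=\tfrac{q-5}{4},\qquad N_1=N_3=\tfrac{q-1}{4}.$$
Corresponding to $C_1^2$ the roles are swapped: $N_1=N_3=\tfrac{q-5}{4}$ and $N_0=N_2=\tfrac{q-1}{4}$. A sanity check is available here. Since $|D|=(q-1)/2$, we have $\sum_k N_k=|D|(|D|-1)/(f)$ in class-count terms, which must equal $2\cdot\frac{q-5}{4}+2\cdot\frac{q-1}{4}$ automatically. So only the differences $N_0-N_1$, $N_0-N_2$ and so on carry information. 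I expect the $s$-terms to cancel in each $N_k$, because each $N_k$ is a sum of four numbers whose $2s$ and $-6s$ contributions balance. What should remain is $N_k=(4q+c_k\pm 8t)/16$ type expressions.

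Third, I impose the conditions and solve. Equating $N_0=N_2$ and $N_1=N_3$, and requiring the gap to be exactly $1$, should force $t=\mp 2$, with the sign determining whether $C_0^2$ or $C_1^2$ is the corresponding PDS. That yields (i) for $t=-2$ and (ii) for $t=2$. Finally, $C_0^2$ and $C_1^2$ are Paley PDSs by Paley's result since $q\equiv 1 \pmod 4$, so $D$ is a skew PDS of Paley type.

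The main obstacle is bookkeeping rather than ideas. I need the correct $f$-odd cyclotomic table with the paper's sign normalisation of $s$ and $t$ from Theorem \ref{thm:KaRaThm}, and careful index shifts $(k-3)\bmod 4$. A sign slip in $t$ would swap cases (i) and (ii). To guard against this, I would verify the final formulas numerically at $q=13$, where $13=3^2+2^2$, after fixing $s$ and the sign of $t$ via the normalisation, before writing the general computation.
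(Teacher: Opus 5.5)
Your proposal is correct and follows the paper's proof essentially verbatim: the same four-term split of $\Delta(D)$, the same multiplicity $N_k=(k,0)_4+(k-3,0)_4+(k-3,1)_4+(k,3)_4$, and the same substitution from the $f$-odd table of Theorem \ref{thm:KaRaThm}. Carrying out the substitution gives $N_0=N_2=(q-3+t)/4$ and $N_1=N_3=(q-3-t)/4$ automatically, so $t=-2$ and $t=2$ yield cases (i) and (ii) directly; note that the $t$-terms are $\pm 4t/16$, not $\pm 8t/16$ as you guessed.
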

\begin{proof}
Since $D=C_0^4 \cup C_3^4$, we have $\Delta(D)=\Delta(C_0^4)+\Delta(C_3^4)+\Delta(C_0^4,C_3^4)+\Delta(C_3^4,C_0^4)$.
By Lemma \ref{lemma:cyc}, $\Delta(C_0^4)=\sum_{i=0}^3 (i,0)_4 C_i^4$, $\Delta(C_3^4)=\sum_{i=0}^3 (i,0)_4 C_{i+3}^4$, $\Delta(C_0^4,C_3^4)=\sum_{i=0}^3 (i,1)_4 C_{i+3}^4$ and $\Delta(C_3^4,C_0^4)=\sum_{i=0}^3 (i,3)_4 C_i^4$. So each element of $C_i^4$ ($0 \leq i \leq 3$) occurs in $\Delta(D)$ precisely
$$ (i,0)_4 + (i-3,0)_4 + (i-3,1)_4 + (i,3)_3$$
times.  We apply Lemma \ref{lemma:cyc} and Theorem \ref{thm:KaRaThm}.  Here $q \equiv 5 \mod 8$ so $f$ is odd. Using $(1,1)_4=(1,0)_4, (2,1)_4=(1,0)_4, (3,0)_4=(2,3)_4$ and $(3,3)_4=(3,0)_4$, we see that $\Delta(D)$ comprises:
\begin{itemize}
\item $(0,0)_4+(0,3)_4+2(1,0)_4=\frac{1}{16}(4q-12+4t)$ copies of $C_0^4$;
\item $2(1,0)_4+(1,3)_4+(2,0)_4=\frac{1}{16}(4q-12-4t)$ copies of $C_1^4$;
\item $(2,0)_4+ 2(2,3)_4+(3,1)_4=\frac{1}{16}(4q-12
+4t)$ copies of $C_2^4$;
\item $(0,0)_4+(0,1)_4+2(3,0)_4=\frac{1}{16}(4q-12-4t)$ copies of $C_3^4$.
\end{itemize}
So $\Delta(D)=\frac{1}{4}(q-3+t)(C_0^4+C_2^4) + \frac{1}{4}(q-3-t)(C_1^4+C_3^4)$, and hence $D$ is a skew PDS for the $(q,\frac{q-1}{2}, \frac{q-5}{4}, \frac{q-1}{4})$-PDS $C_0^2$ precisely if $t=-2$, and is a skew PDS for the $(q,\frac{q-1}{2}, \frac{q-5}{4}, \frac{q-1}{4})$-PDS $C_1^2$ precisely if $t=2$.
\end{proof}

The following result is an immediate consequence of Theorem \ref{thm:C0C3}.
\begin{theorem}\label{thm:C0C1}
Let $q$ be a prime power congruent to $5$ modulo $8$. Let $\alpha$ be a primitive element of $\mathbb{F}_q$. Let $q=s^2+t^2$ be the unique proper representation (with sign ambiguity resolved) as in Theorem \ref{thm:KaRaThm}. 

Then $C_0^4 \cup C_1^4$ is a Paley skew PDS corresponding to 
\begin{itemize}
    \item[(i)] the Paley $(q,\frac{q-1}{2}, \frac{q-5}{4}, \frac{q-1}{4})$-PDS  $C_1^2$, if $t=-2$; 
    \item[(ii)] the Paley $(q,\frac{q-1}{2}, \frac{q-5}{4}, \frac{q-1}{4})$-PDS  $C_0^2$, if $t=2$.
\end{itemize}
\end{theorem}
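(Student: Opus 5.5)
Theorem \ref{thm:C0C1} follows from Theorem \ref{thm:C0C3} by applying a multiplicative automorphism of the additive group of $\mathbb{F}_q$. The map $\phi_\alpha : x \mapsto \alpha x$ is an additive automorphism of $\mathbb{F}_q$. It sends $C_i^e$ onto $C_{i+1}^e$, with indices taken modulo $e$. Since $\phi_\alpha$ is additive, it commutes with forming differences: $\Delta(\phi_\alpha(S)) = \phi_\alpha(\Delta(S))$ as multisets for any $S \subseteq \mathbb{F}_q$. Hence if $D$ is a skew PDS corresponding to a PDS $A$, with $\Delta(D)=\lambda A^* + \mu(G^*\setminus A)$, then applying $\phi_\alpha$ gives
$$\Delta(\alpha D) = \lambda (\alpha A)^* + \mu(G^* \setminus \alpha A).$$
Moreover $\alpha A$ is again a PDS with the same parameters, by the same argument applied to $\Delta(A)$. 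So $\alpha D$ is a skew PDS corresponding to $\alpha A$.

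Now take $D = C_0^4 \cup C_3^4$ from Theorem \ref{thm:C0C3}. Then $\alpha D = C_1^4 \cup C_4^4 = C_1^4 \cup C_0^4$, which is exactly the set in the statement. For the PDS, $\alpha C_0^2 = C_1^2$ and $\alpha C_1^2 = C_2^2 = C_0^2$.

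We must make sure the case condition on $t$ is unchanged. The representation $q=s^2+t^2$ with its sign normalization in Theorem \ref{thm:KaRaThm} is fixed by the choice of the primitive element $\alpha$, and we use the same $\alpha$ throughout. Multiplying by $\alpha$ does not change $\alpha$, so the cyclotomic classes, the cyclotomic numbers and $t$ are all unchanged. The cases therefore transfer directly. If $t=-2$, then $C_0^4\cup C_3^4$ corresponds to $C_0^2$, so $C_0^4\cup C_1^4$ corresponds to $C_1^2$. If $t=2$, it corresponds to $C_0^2$ instead.

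This last point is the only delicate step: one must not silently switch to a different primitive element such as $\alpha^{-1}$, which could flip the sign of $t$. As a sanity check, one can recompute $\Delta(C_0^4\cup C_1^4)$ directly from Lemma \ref{lemma:cyc}. This should give multiplicity $\frac14(q-3+t)$ on $C_1^4\cup C_3^4$ and $\frac14(q-3-t)$ on $C_0^4\cup C_2^4$, confirming the stated correspondence.
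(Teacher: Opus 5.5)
Your proposal is correct and is essentially the paper's own argument: the paper sets $E=C_0^4\cup C_1^4=\alpha D$ with $D=C_0^4\cup C_3^4$, and reads off $\Delta(E)=\alpha\Delta(D)=\frac14(q-3+t)(C_1^4+C_3^4)+\frac14(q-3-t)(C_0^4+C_2^4)$ from Theorem~\ref{thm:C0C3}. Your explicit remark that the same primitive element $\alpha$ must be used throughout, since replacing $\alpha$ by $\alpha^{-1}$ flips the sign of $t$, is a useful clarification that the paper leaves implicit.
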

\begin{proof}
Let $E=C_0^4 \cup C_1^4$.  Since $E=\alpha D$, $\Delta(D)=\frac{1}{4}(q-3+t)(C_1^4+C_3^4) + \frac{1}{4}(q-3-t)(C_2^4+C_0^4)$.
\end{proof}

The negative and complement of this skew PDS also form skew PDSs.
\begin{theorem}
Let $q$ be a prime power congruent to $5$ modulo $8$. Let $\alpha$ be a primitive element of $\mathbb{F}_q$. Let $q=s^2+t^2$ be the unique proper representation (with sign ambiguity resolved) as in Theorem \ref{thm:KaRaThm}. Let $D=C_0^4 \cup C_3^4$.  Then
\begin{itemize}
\item[(i)] $G^* \setminus D=-D=C_1^4 \cup C_2^4$ is a Paley skew PDS corresponding to the Paley PDS $C_0^2$ if $t=-2$ and $C_1^2$ if $t=2$.
\item[(ii)] $G \setminus D = C_1^4 \cup C_2^4 \cup \{0\}$ is a skew PDS corresponding to the $(q,(q+1)/2,(q-1)/4,(q+3)/4)$-PDS $C_1^2 \cup \{0\}$ if $t=-2$ and $C_0^2 \cup \{0\}$ if $t=2$.
\end{itemize}
\end{theorem}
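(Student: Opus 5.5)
The plan is to derive both parts directly from Theorem \ref{thm:C0C3} and Theorem \ref{thm:skewPDScomp}, without recomputing any cyclotomic numbers.

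\emph{Part (i).} First I identify the set $G^*\setminus D$ with $-D$. Since $q\equiv 5 \pmod 8$, $f=(q-1)/4$ is odd. Hence $-1=\alpha^{(q-1)/2}=\alpha^{2f}$ lies in $C_2^4$, because $2f\equiv 2 \pmod 4$. Multiplying by $-1$ therefore sends $C_i^4$ to $C_{i+2}^4$, so
\[
-D=C_2^4\cup C_1^4=\mathbb{F}_q^*\setminus D.
\]
Next I use the fact that $\Delta(-D)=-\Delta(D)$ as multisets. From the proof of Theorem \ref{thm:C0C3},
\[
\Delta(D)=\tfrac14(q-3+t)(C_0^4+C_2^4)+\tfrac14(q-3-t)(C_1^4+C_3^4).
\]
Negation swaps $C_0^4$ with $C_2^4$ and $C_1^4$ with $C_3^4$, so this expression is invariant under negation. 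The same holds for $C_0^2$ and $C_1^2$, since $-1\in C_0^2$. Therefore $\Delta(-D)=\Delta(D)$, and $-D$ is a skew PDS corresponding to the same PDS as $D$: that is $C_0^2$ when $t=-2$ and $C_1^2$ when $t=2$.

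\emph{Part (ii).} I apply Theorem \ref{thm:skewPDScomp}(ii) to $D$. Then $G\setminus D=C_1^4\cup C_2^4\cup\{0\}$ is a skew PDS corresponding to $G\setminus A$, where $A$ is the PDS for $D$ from Theorem \ref{thm:C0C3}. If $A=C_0^2$ (the case $t=-2$), then $G\setminus A=C_1^2\cup\{0\}$; if $A=C_1^2$ (the case $t=2$), then $G\setminus A=C_0^2\cup\{0\}$. The parameters come from the formula $(v,\,v-k,\,v-2k+\mu,\,v-2k+\lambda)$ with $v=q$, $k=(q-1)/2$, $\lambda=(q-5)/4$, $\mu=(q-1)/4$. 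Here $v-2k=1$, which gives
\[
\left(q,\ \tfrac{q+1}{2},\ \tfrac{q+3}{4},\ \tfrac{q-1}{4}\right).
\]

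\emph{Main obstacle: the parameter ordering.} This last tuple has the two middle values in the opposite order to the stated $\bigl(q,(q+1)/2,(q-1)/4,(q+3)/4\bigr)$. So I need to reconcile conventions before finishing. I would check directly: $\Delta(C_1^2\cup\{0\})=\Delta(C_1^2)+C_1^2+(-C_1^2)$, where the last two terms together add $2C_1^2$. The element $0$ is excluded from the PDS condition, so this gives the value on the nonzero elements of the set as
\[
\tfrac{q-5}{4}+2=\tfrac{q+3}{4},
\]
and the value on $C_0^2$ as $(q-1)/4$. This confirms the parameters $\bigl(q,(q+1)/2,(q+3)/4,(q-1)/4\bigr)$.

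The stated tuple would then match only under the opposite convention for which of $\lambda,\mu$ is listed first, or it is a typo in the statement. In writing up, I would state the multiset equation for $\Delta(G\setminus D)$ explicitly, so that the correspondence is unambiguous whichever ordering is intended.
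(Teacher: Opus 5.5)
Your proposal is correct and follows the paper's argument: part (i) uses $-1\in C_2^4$ and $\Delta(-D)=\Delta(D)$ (the paper writes $-D=\alpha^2 D$, whereas you use negation symmetry, which actually holds for any set because difference multisets are closed under negation), and part (ii) uses Theorem~\ref{thm:skewPDScomp}(ii), which the paper cites alongside its direct computation $\Delta(G\setminus D)=\mathbb{F}_q^*+\Delta(-D)$. You are also right about the parameters: the paper's own proof shows $C_i^2\cup\{0\}$ is a $(q,(q+1)/2,(q+3)/4,(q-1)/4)$-PDS, so the ordering in the statement is a typo.
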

\begin{proof}
As $q \equiv 5 \mod 8$, $q-1=8k+4$ for some integer $k$.  So $-1=\alpha^{(q-1)/2}=\alpha^{4k+2} \in C_2^4$ and $-C_i^4=C_{i+2}^4$. \\
(i) Since $-D=\alpha^2 D$ and $\alpha^2 \in C_0^2$, we have that $\Delta(-D)=\Delta(\alpha^2 D)=\alpha^2 \Delta(D)=\frac{1}{4}(q-3+t)C_0^2 + \frac{1}{4}(q-3-t)C_3^2=\Delta(D)$ and so $-D$ is a skew PDS corresponding to the same PDS as $D$ in each case.  Observe that $-D=C_1^4 \cup C_2^4$ is the complement $G^* \setminus D= G^* \setminus (C_0^4 \cup C_3^4)$.\\
(ii) Here $\Delta(G \setminus D)=-(C_1^4 \cup C_2^4) \cup (C_1^4 \cup C_2^4) \cup \Delta(C_1^4 \cup C_2^4)=\mathbb{F}_q^* \cup \Delta(D)$ by (i).  This comprises $\frac{q+1+t}{4}$ copies of $C_0^2$ and $\frac{q+1-t}{4}$ copies of $C_1^3$.  Observe that $\Delta(C_i^2 \cup \{0\})=2C_i^2 \cup \Delta(C_i^2)$ and so $C_i^2 \cup \{0\}$ is a $(q,(q+1)/2, (q+3)/4,(q-1)/4)$-PDS.  So for $t=-2$, $G \setminus D$ is a skew PDS for $C_1 \cup \{0\}$ and for $t=2$, it is a skew PDS for $C_0 \cup \{0\}$, as required. This also follows from Theorem \ref{thm:skewPDScomp}.
\end{proof}

\begin{example}\label{ex:e4example}
Let $q=13$. Take primitive element $2$ of $\mathbb{F}_{13}$.  Then $13=9+4=s^2+t^2$ where $s=-3 \equiv 1 \mod 4$ and $t=\pm 2$, where the sign of $t$ is determined by $8=2^3 \equiv (-3)t^{-1} \mod 13$.   So $t^{-1} \equiv -7 \mod 13$, i.e., $t=-2$.  The cyclotomic classes are $C_0^4=\{1,3,9\}$, $C_1^4=\{2,5,6\}$, $C_2^4=\{4,10,12\}$ and $C_3^4=\{7,8,11\}$.  Hence $C_0^4 \cup C_3^4=\{1,3,7,8,9,11\}$ is a skew PDS corresponding to the Paley $(13,6,2,3)$-PDS $C_0^2=\{1,3,4,9,10,12\}$, and $C_0^4 \cup C_1^4=\{1,2,3,5,6,9\}$ is a skew PDS corresponding to the Paley $(13,6,2,3)$-PDS $C_1^2=\{2,5,6,7,8,11\}$.

Observe that if we take primitive element $7$ of $\mathbb{F}_{13}$, then $t=2$ (since $7^3=343 \equiv 18 \mod 13=(-3)t^{-1}$ so $t^{-1}=-6$)  and the cyclotomic classes are $C_0^4=\{1,3,9\}$, $C_1^4=\{7,8,11\}$, $C_2^4=\{4,10,12\}$ and $C_3^4=\{2,5,6\}$.  Hence $C_0^4 \cup C_3^4=\{1,2,3,5,6,9\}$ is a skew PDS corresponding to the PDS $C_1^2=\{2,5,6,7,8,11\}$, and $C_0^4 \cup C_1^4=\{1,3,7,8,9,11\}$ is a skew PDS corresponding to the PDS $C_0^2=\{1,3,4,9,10,12\}$.
\end{example}

\begin{example}
Table \ref{tab:skewPaleye=4} gives values of $q<10^4$ for which $t=\pm 2$ and the constructions of Theorem \ref{thm:C0C3} and Theorem \ref{thm:C0C1} give skew Paley PDSs corresponding to $C_0^2$ and $C_1^2$. All such $q$ satisfy $q \equiv 5 \mod 8$, but not every $q \equiv 5 \mod 8$ is of the necessary form.

\begin{table}[h]
	\centering
	\[
	\begin{array}{ccc}
		\hline
		q & s^2+t^2 & \text{Skew PDS parameters} \\
		\hline
		13 & (-3)^2+(\pm2)^2 & (13,6,2,3) \\
		29 & 5^2+(\pm2)^2 & (29,14,6,7) \\
		53 & (-7)^2+(\pm2)^2 & (53,26,12,13) \\
		125 & (-11)^2+(\pm2)^2 & (125,62,30,31) \\
		173 & 13^2+(\pm2)^2 & (173,86,42,43) \\
		229 & (-15)^2+(\pm2)^2 & (229,114,56,57) \\
		293 & 17^2+(\pm2)^2 & (293,146,72,73) \\
		733 & (-27)^2+(\pm2)^2 & (733,366,182,183) \\
		1093 & 33^2+(\pm2)^2 & (1093,546,272,273) \\
		1229 & (-35)^2+(\pm2)^2 & (1229,614,306,307) \\
		1373 & 37^2+(\pm2)^2 & (1373,686,342,343) \\
		2029 & 45^2+(\pm2)^2 & (2029,1014,506,507) \\
		2213 & (-47)^2+(\pm2)^2 & (2213,1106,552,553) \\
		3253 & 57^2+(\pm2)^2 & (3253,1626,812,813) \\
		4229 & 65^2+(\pm2)^2 & (4229,2114,1056,1057) \\
		4493 & (-67)^2+(\pm2)^2 & (4493,2246,1122,1123) \\
		5333 & 73^2+(\pm2)^2 & (5333,2666,1332,1333) \\
		7229 & 85^2+(\pm2)^2 & (7229,3614,1806,1807) \\
		7573 & (-87)^2+(\pm2)^2 & (7573,3786,1892,1893) \\
		9029 & (-95)^2+(\pm2)^2 & (9029,4514,2256,2257) \\
		9413 & 97^2+(\pm2)^2 & (9413,4706,2352,2353) \\
		\hline
	\end{array}
	\]
	\caption{Skew PDSs from Theorem \ref{thm:C0C3} and Theorem \ref{thm:C0C1}.}
	\label{tab:skewPaleye=4}
\end{table}

\end{example}

It can be proven that $C_0^4 \cup C_3^4$ is not a PDS nor a skew PDS for any $q \equiv 1 \mod 8$.

Note that a skew PDS corresponding to a Paley PDS gives an example of an almost difference set (see for example \cite{Ara}).
\begin{definition}
Let $G$ be a group of order $v$, and let $D$ be a subset of $G$ of size $k$. Then $D$ is a $(v,k,\lambda,t)$ \emph{almost difference set} (ADS) in $G$ if for some $t$-subset $T$ of $G^*$ the following multiset equation holds:
$$\Delta(D)=\lambda T^* + (\lambda+1) (G^* \setminus T).$$
\end{definition}

So every Paley PDS and corresponding skew PDS is a $(v,(v-1)/2, (v-5)/4,(v-1)/2)$-ADS possessing extra properties.  For prime $v$, the construction of Theorem \ref{thm:C0C1} is observed to form an ADS in Theorem 4 of \cite{DinHelLam}.  Theorems 7-9 of \cite{DinHelLam} present other constructions which are said to produce $(v,(v-1)/2, (v-5)/4,(v-1)/2)$-ADSs via unions of cyclotomic classes of order $6$ - these would be of potential interest for Paley skew PDSs. However, these seem to be incorrect: computational checking shows that they do not produce ADSs, and there are errors in the proof of Theorem 7.

\section{Cyclotomic skew PDSs using classes of order $8$}

In this section we use cyclotomic classes of order $8$ to obtain skew PDSs.  Necessary results on cyclotomic numbers of order $8$ may again be found in the Appendix.

We present a construction of a cyclotomic skew PDS which is not of Paley type. We will need the following useful result (Lemma 2 of \cite{DinPotWan}):
\begin{lemma}\label{lem:DinPotWan}
If $p \equiv 3 \mod 8$ and $m$ is even, then $2$ is a quartic residue in $\mathbb{F}_{p^m}$.
\end{lemma}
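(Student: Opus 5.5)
Write $q=p^m$ with $p\equiv 3 \pmod 8$ and $m$ even. Since $\mathbb{F}_q^*$ is cyclic of order $q-1$ with generator $\alpha$, an element $a\in\mathbb{F}_q^*$ is a quartic residue, i.e. lies in $C_0^4=\langle\alpha^4\rangle$, exactly when $a^{(q-1)/4}=1$. Applied to $a=2$, the statement amounts to $2^{(q-1)/4}=1$ in $\mathbb{F}_q$. I would prove this by showing that $2$ already lies in a smaller multiplicative subgroup that sits inside $C_0^4$.

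First, $2\in\mathbb{F}_p^*\subseteq\mathbb{F}_{p^2}^*\subseteq\mathbb{F}_q^*$, since $2\mid m$. So I will show that every element of $\mathbb{F}_{p^2}^*$ is a fourth power in $\mathbb{F}_q$. The subgroup $\mathbb{F}_{p^2}^*$ is the unique subgroup of order $p^2-1$ in the cyclic group $\mathbb{F}_q^*$, namely $\langle \alpha^{(q-1)/(p^2-1)}\rangle$. It is therefore contained in $\langle\alpha^4\rangle$ precisely when $4$ divides $(q-1)/(p^2-1)$.

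Put $k=m/2$. Then
\[
\frac{q-1}{p^2-1}=\frac{(p^2)^k-1}{p^2-1}=1+p^2+p^4+\cdots+p^{2(k-1)},
\]
and each of these $k$ terms is $\equiv 1 \pmod 8$ because $p^2\equiv 1\pmod 8$. The quotient is therefore $\equiv k \pmod 8$, so $4$ divides it whenever $4\mid k$, i.e. whenever $8\mid m$.

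This subfield argument cannot cover all even $m$. If $k$ is odd, the quotient is odd, so $\mathbb{F}_{p^2}^*$ is not contained in $C_0^4$. If $k\equiv 2\pmod 4$, it is only $\equiv 2\pmod 4$. So for $m\not\equiv 0\pmod 8$ I must use that $2$ lies in the smaller subgroup $\mathbb{F}_p^*$, and look at where $2$ sits inside it. Since $p\equiv 3\pmod 8$, the second supplement to quadratic reciprocity gives that $2$ is a nonsquare mod $p$. Also $-1$ is a nonsquare because $p\equiv 3\pmod 4$, so $-2$ is a square mod $p$.

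Next I need the image of $\mathbb{F}_p^*$ in $\mathbb{F}_q^*/C_0^4$. The subgroup $\mathbb{F}_p^*=\langle\alpha^{N}\rangle$ with $N=(q-1)/(p-1)=1+p+\cdots+p^{m-1}$. Pairing consecutive terms, $p^{2i}+p^{2i+1}=p^{2i}(1+p)\equiv 4\pmod 8$, so $N\equiv 4k\pmod 8$. For $k$ odd this gives $N\equiv 4\pmod 8$, hence $4\mid N$. Then all of $\mathbb{F}_p^*$ lies in $C_0^4$, and in particular so does $2$. For $k$ even, $8\mid N$, which again puts $\mathbb{F}_p^*$, and hence $2$, inside $C_0^4$. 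This settles every even $m$ directly from $p\equiv 3\pmod 4$, and it also subsumes the $\mathbb{F}_{p^2}$ computation above.

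So the whole proof reduces to computing $(q-1)/(p-1)\bmod 4$: it is $\sum_{j<m}p^j\equiv\sum_{j<m}(-1)^j=0\pmod 4$ for $m$ even. The subgroup-containment argument then puts $2$ in $C_0^4$. The congruence $p\equiv 3\pmod 8$ is not needed for this conclusion, only $p\equiv 3\pmod 4$. I expect the stronger hypothesis in the statement matters for the finer eighth-power information used later in the paper. The only point needing care is the subgroup-containment criterion in a cyclic group, and it is standard.
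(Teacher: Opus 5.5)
Your argument is correct. The paper gives no proof of its own here; it imports the statement as Lemma 2 of \cite{DinPotWan}. Your proposal therefore supplies a self-contained argument rather than a variant of one in the text.

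The decisive step is the one you isolate at the end. With $N=(q-1)/(p-1)=\sum_{j<m}p^j\equiv\sum_{j<m}(-1)^j\equiv 0\pmod 4$, one gets $2^{(q-1)/4}=(2^{p-1})^{N/4}=1$, and in fact all of $\mathbb{F}_p^*$ lies in $C_0^4$. This uses only $p\equiv 3\pmod 4$, so it proves more than is stated. Your guess that the stronger congruence matters elsewhere is consistent with the paper. In Theorems~\ref{thm:C_1^4skewPDS} and~\ref{thm:e=8skewPaley}, $p\equiv 3\pmod 8$ (together with $m\equiv 2\pmod 4$) is what gives $q\equiv 9\pmod{16}$ and a proper representation $q=a^2+2b^2$.

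In a write-up you can delete the opening $\mathbb{F}_{p^2}$ discussion and the remarks about $2$ and $-2$ modulo $p$, since neither is used. The $\mathbb{F}_{p^2}$ route only looked insufficient because you asked for every element of $\mathbb{F}_{p^2}^*$ to be a fourth power in $\mathbb{F}_q$. It is enough that $2=\beta^4$ for some $\beta\in\mathbb{F}_{p^2}$. That is the case $m=2$ of your computation, namely $4\mid p+1$, and then $\beta\in\mathbb{F}_{p^m}$ for every even $m$.
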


\begin{theorem}\label{thm:C_1^4skewPDS}
Let $q=p^m$ where $p \equiv 3 \mod 8$ and $m \equiv 2 \mod 4$.
Suppose that $q=x^2+4y^2=a^2+2b^2$
are the unique proper representations of $q$.
If $x+a=-2$, then $D=C_3^8 \cup C_5^8$ is a skew PDS corresponding to the $(q,(q-1)/4,(q-11-6x)/16,(q-3+2x)/16)$-PDS $C_0^4$.

Moreover, for $0 \leq i \leq 7$, the set $C_i^8 \cup C_{i+2}^8$ is a skew PDS corresponding to the $(q,(q-1)/4,(q-11-6x)/16,(q-3+2x)/16)$-PDS $C_{i+1}^4$.
\end{theorem}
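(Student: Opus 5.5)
We follow the template of the proof of Theorem \ref{thm:C0C3}, now with cyclotomic classes of order $8$. Write $D=C_3^8\cup C_5^8$. Then
$$\Delta(D)=\Delta(C_3^8)+\Delta(C_5^8)+\Delta(C_3^8,C_5^8)+\Delta(C_5^8,C_3^8).$$
By Lemma \ref{lemma:cyc}, each element of $C_i^8$ ($0\le i\le 7$) occurs in $\Delta(D)$ exactly
$$N_i=(i-3,0)_8+(i-5,0)_8+(i-5,6)_8+(i-3,2)_8$$
times, with indices taken mod $8$. The target multiset is $\lambda C_0^4+\mu(\mathbb{F}_q^*\setminus C_0^4)$, where $C_0^4=C_0^8\cup C_4^8$, $\lambda=(q-11-6x)/16$ and $\mu=(q-3+2x)/16$. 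By Theorem \ref{thm:C_0^4PDS}, $C_0^4$ is indeed a PDS with these parameters, since $p\equiv 3 \bmod 4$ and $m$ is even. So it suffices to show that $N_0=N_4=\lambda$ and $N_i=\mu$ for $i\in\{1,2,3,5,6,7\}$.

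To evaluate the $N_i$ we use the order-$8$ cyclotomic numbers from the Appendix. These are expressed in terms of $q$, $x,y$ (from $q=x^2+4y^2$) and $a,b$ (from $q=a^2+2b^2$), with the formulas depending on the parity of $f=(q-1)/8$ and on whether $2$ is a quartic residue. Here $q=p^m$ with $m\equiv 2 \bmod 4$ and $p\equiv 3\bmod 8$, so $q\equiv 9 \bmod 16$ and $f$ is odd. Also $-1=\alpha^{4f}\in C_4^8$, and by Lemma \ref{lem:DinPotWan} the number $2$ is a quartic residue. Since $q$ is an even power of a prime $\equiv 3 \bmod 4$, $y=0$ (as in Theorem \ref{thm:C_0^4PDS}). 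The constraint $y=0$ is what collapses the many distinct cyclotomic numbers into a few values. Before computing, I would reduce the number of distinct cyclotomic numbers using the standard symmetries for $f$ odd, namely $(i,j)_8=(j+4,i+4)_8$ and $(i,j)_8=(-i,j-i)_8$, exactly as was done with $(1,1)_4=(1,0)_4$ etc.\ in the order-$4$ proof.

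Substituting, each $N_i$ becomes $\frac{1}{64}(4q+c_1x+c_2a+c_3)$ for some small constants. I expect the $N_i$ to split into classes according to $i \bmod 4$ plus a term involving $x+a$. The hypothesis $x+a=-2$ should then make all six values off $C_0^4$ agree and equal $\mu$, while forcing $N_0=N_4=\lambda$. A useful consistency check is that $\sum_i N_i f=|D|(|D|-1)$.

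This explicit substitution is the main obstacle. The order-$8$ tables are large, the sign conventions for $a$ and $b$ must match the Appendix, and the values depend on the choice of primitive element. I would carry it out carefully, and verify it numerically for $q=3^2=9$ (where $f=1$) and $q=19^2$ if $x+a=-2$ holds there, as well as for $q=11^2$ if applicable.

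For the \emph{moreover} part, note that $C_i^8\cup C_{i+2}^8=\alpha^{i-3}D$. Since $\Delta(\beta S)=\beta\,\Delta(S)$, multiplying the identity for $D$ by $\alpha^{i-3}$ gives
$$\Delta(\alpha^{i-3}D)=\lambda\,\alpha^{i-3}C_0^4+\mu\bigl(\mathbb{F}_q^*\setminus \alpha^{i-3}C_0^4\bigr).$$
Here $\alpha^{i-3}C_0^4=C_{i-3}^8\cup C_{i+1}^8$. Because $i-3\equiv i+1+4 \pmod 8$, this set is $C_{i+1}^4$ read as a class of order $4$ (indices mod $4$). Finally, $C_{i+1}^4$ is itself a PDS with the same parameters as $C_0^4$, being a multiplicative translate of it.
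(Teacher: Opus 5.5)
Your plan is the paper's proof: the same four-term decomposition of $\Delta(D)$, the same frequency $N_i=(i-3,0)_8+(i-5,0)_8+(i-5,6)_8+(i-3,2)_8$, the same inputs ($f$ odd, $2$ a quartic residue by Lemma~\ref{lem:DinPotWan}, $y=0$), and the same scaling argument for the second part. That argument ($C_i^8\cup C_{i+2}^8=\alpha^{i-3}D$ and $\alpha^{i-3}C_0^4=C_{i+1}^4$) is correct.

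What is missing is the step that carries all the content. You only say that $x+a=-2$ ``should'' equalise the six frequencies off $C_0^4$, and that claim is the theorem itself. The computation is short. Your two symmetries give $(i,0)_8=(i+4,0)_8$ and $(i,2)_8=(i+2,6)_8$, hence $N_j=N_{j+4}$. Now read the table of Theorem~\ref{thm:e=8cyclo} for $q\equiv 9\pmod{16}$ with $2$ a quartic residue. In the Appendix's letters, $N_0=I+J+K+M$, $N_1=A+G+2N$, $N_2=I+J+L+O$ and $N_3=A+C+2N$, where the letter $N$ denotes the table entry $(2,0)_8$. This gives
\[
N_0=\tfrac{q-3-2x+4a}{16},\qquad N_1=\tfrac{q-7-2a+4y}{16},\qquad N_2=\tfrac{q-3+2x}{16},\qquad N_3=\tfrac{q-7-2a-4y}{16}.
\]
With $y=0$ you get $N_1=N_3$, and $N_1=N_2$ is literally the condition $x+a=-2$. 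Then $N_0=(q-11-6x)/16$ follows by substitution, or from your own check $\sum_i N_i=(q-5)/2$.

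Two of your side remarks are slightly off. The reduction to four distinct values comes from the symmetries, not from $y=0$; the condition $y=0$ only removes the $\pm 4y$ term that separates $N_1$ from $N_3$. Also, there is no dependence on the primitive element or on sign conventions here: the $\pm 16b$ terms cancel in $K+M$, and the $y$-terms vanish. Once you write this evaluation out, your argument is complete and coincides with the paper's.
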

\begin{proof}
Since $p \equiv 3 \mod 4$ and $q=p^{2r}$ (for some odd $r$), by Theorem \ref{thm:C_0^4PDS}, each $C_i^4$ is a PDS with the given parameters.  We show that $D=C_3^8 \cup C_5^8$ is a skew PDS corresponding to $C_0^4$.
By Theorem \ref{thm:e=8cyclo}, since $p \not\equiv 1 \mod 4$, the unique proper representation $q=x^2+4y^2$ has $x=\pm p^{m/2}$ and $y=0$.  Since $p \equiv 3 \mod 8$, the unique proper representation $q=a^2+2b^2$ has $a \equiv 1 \mod 4$.  By Lemma \ref{lem:DinPotWan}, since $p \equiv 3 \mod 8$ and $2r$ is even, $2$ is a quartic residue in $\mathbb{F}_q$.  Since $p^2 \equiv 9 \mod 16$ and $q=(p^2)^r$ where $r$ is odd, $q \equiv 9 \mod 16$. 
Now, $$\Delta(D)=\Delta(C_3^8)+\Delta(C_5^8)+\Delta(C_3^8,C_5^8)+\Delta(C_5^8,C_3^8).$$
Here $\Delta(C_3^8)=\sum_{i=0}^7 (i,0)_8 C_{i+3}^8$, $\Delta(C_5^8)=\sum_{i=0}^7 (i,0)_8 C_{i+5}^8$, $\Delta(C_3^8, C_5^8)=\sum_{i=0}^7 (i,6)_8 C_{i+5}^8$ and $\Delta(C_5^8,C_3^8)=\sum_{i=0}^7 (i,2)_8 C_{i+3}^8$.  From these, we see that each element of $C_i^8$ ($0 \leq i \leq 7$) occurs in $\Delta(D)$ precisely
$$N_i=(i-3,0)_8+(i-5,0)_8+(i-5,6)_8+(i-3,2)_8$$
times.  
Using Theorem \ref{thm:e=8cyclo}, we see that for $0 \leq i \leq 7$, $(i,0)_8=(i+4,0)_8$ and $(i,2)_8=(i+2,6)_8$, i.e., $(i-2,6)_8=(i+4,2)_8$. Hence 
$$N_j=N_{j+4}=(j+1,0)_8+(j-1,0)_8+(j+1,2)_8+(j-1,6)_8$$ 
for $0 \leq j \leq 3$, i.e., each element of $C_j^4$ occurs $N_j$ times.  From the formulae given in Theorem \ref{thm:e=8cyclo} for this case, the elements of each class occur with the following frequencies:
\begin{itemize}
\item $N_0=(q-3-2x+4a)/16$; 
\item $N_1=(q-7-2a+4y)/16$;
\item $N_2=(q-3+2x)/16$;
\item $N_3=(q-7-2a-4y)/16$.
\end{itemize}
Since $y=0$ and $x+a=-2$, we have $N_1=N_2=N_3=(q-3+2x)/16$. Thus the frequency of each element of $C_1^4 \cup C_2^4 \cup C_3^4$ equals $(q-3+2x)/16$ as required. We have that $N_0=(q-11-6x)/16$, which is distinct from the other frequencies since $x \neq -1$.
The second claim follows since $\Delta(\alpha^{-3}(C_3^8 \cup C_5^8))=\Delta(C_0^8 \cup C_2^8)=(q-11-6x)/16 C_1^4 + (q-3+2x)/16 (C_2^4+C_3^4+C_0^4)$ so $C_0^8 \cup C_2^8$ is a skew PDS corresponding to $\alpha^{-3}C_0^4=C_1^4$.  Hence $\Delta(C_i^8 \cup C_{i+2}^8)= \alpha^{i-3}\Delta(D)= ((q-11-6x)/16) C_{i+1}^4 + ((q-3+2x)/16) (C_i^4 \cup C_{i+2}^4 \cup C_{i+3}^4)$.
\end{proof}

The complement and negative of the above construction also yield skew PDSs:

\begin{theorem}
Let $q=p^m$ where $p \equiv 3 \mod 8$ and $m \equiv 2 \mod 4$.
Suppose that $q=x^2+4y^2=a^2+2b^2$ are the unique proper representations of $q$ with $x+a=-2$ and let $D=C_3^8 \cup C_5^8$. Then
\begin{itemize}
\item[(i)]
		$G \setminus D=C_0^8 \cup C_1^8 \cup C_2^8 \cup C_4^8 \cup C_6^8 \cup C_7^8 \cup \{0\}$ is a skew PDS corresponding to the $(q,(3q+1)/4,(9q + 2x + 5)/16,(9q-6x-3)/16)$-PDS $C_1^4 \cup C_2^4 \cup C_3^4 \cup \{0\}$.
\item[(ii)] $-D=C_1^8 \cup C_7^8$  is a skew PDS corresponding to the PDS $C_0^4$.
\end{itemize}
\end{theorem}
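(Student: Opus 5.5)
Both parts follow from Theorem \ref{thm:C_1^4skewPDS}, which gives $\Delta(D)=\frac{q-11-6x}{16}C_0^4+\frac{q-3+2x}{16}(C_1^4+C_2^4+C_3^4)$, together with Theorem \ref{thm:skewPDScomp}(ii) and the position of $-1$ among the classes of order $8$.

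\textbf{Part (ii).} First locate $-1$. We showed in the proof of Theorem \ref{thm:C_1^4skewPDS} that $q\equiv 9 \bmod 16$. Then $(q-1)/2=8k+4$ with $k=(q-9)/16$, so $-1=\alpha^{(q-1)/2}\in C_4^8$. Hence $-C_i^8=C_{i+4}^8$, and
$$-D=C_7^8\cup C_1^8=\alpha^4 D.$$
Also $\alpha^4\in C_0^4$. The map $z\mapsto -z$ (or $z\mapsto \alpha^4 z$) sends $\Delta(D)$ bijectively onto $\Delta(-D)$ as multisets. Since $-C_j^4=C_j^4$ for every $j$ (as $-1\in C_4^8\subseteq C_0^4$), we get $\Delta(-D)=-\Delta(D)=\Delta(D)$. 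So $-D$ is a skew PDS corresponding to $C_0^4$. As a consistency check, this is the case $i=7$ of the second claim of Theorem \ref{thm:C_1^4skewPDS}, which gives correspondence to $C_{8}^4=C_0^4$.

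\textbf{Part (i).} Apply Theorem \ref{thm:skewPDScomp}(ii) with $A=C_0^4$, which is a $(q,k,\lambda,\mu)$-PDS with
$$k=\frac{q-1}{4},\qquad \lambda=\frac{q-11-6x}{16},\qquad \mu=\frac{q-3+2x}{16}.$$
It follows that $G\setminus D$ is a skew PDS corresponding to the PDS $G\setminus C_0^4=C_1^4\cup C_2^4\cup C_3^4\cup\{0\}$. That PDS has parameters $(v,\,v-k,\,v-2k+\mu,\,v-2k+\lambda)$, which we now compute:
\begin{itemize}
\item $v-k=\dfrac{3q+1}{4}$;
\item $v-2k+\mu=\dfrac{q+1}{2}+\dfrac{q-3+2x}{16}=\dfrac{9q+2x+5}{16}$;
\item $v-2k+\lambda=\dfrac{q+1}{2}+\dfrac{q-11-6x}{16}=\dfrac{9q-6x-3}{16}$.
\end{itemize}
These match the statement. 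We also identify $G\setminus D$ explicitly: removing $D=C_3^8\cup C_5^8$ from $\mathbb{F}_q$ leaves the six other classes of order $8$ together with $\{0\}$.

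There is no real obstacle. The only care needed is in the bookkeeping: deriving $-1\in C_4^8$ from $q\equiv 9 \bmod 16$, and checking the parameter arithmetic.
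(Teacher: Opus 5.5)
Your proposal is correct and follows the paper's proof. Part (i) is Theorem~\ref{thm:skewPDScomp}(ii) applied to $A=C_0^4$, and you also check the parameter arithmetic, which the paper leaves implicit. Part (ii) uses $q\equiv 9 \pmod{16}$ to get $-1\in C_4^8$, hence $-D=\alpha^4D=C_1^8\cup C_7^8$. The only small difference is how you finish (ii): the paper cites the second claim of Theorem~\ref{thm:C_1^4skewPDS} (your ``consistency check''), whereas you argue $\Delta(-D)=-\Delta(D)=\Delta(D)$. That identity holds for every subset of an abelian group, so locating $-1$ is needed only to identify $-D$ explicitly as $C_1^8\cup C_7^8$.
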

\begin{proof}
Part (i) is immediate by Theorem \ref{thm:skewPDScomp}. 
Let $\alpha$ be a primitive element of $\mathbb{F}_q$.  Since $q \equiv 9 \mod 16$, $-1 \in C_4^8$ and so $-C_i^8=C_{i+4}^8$; so $-D=\alpha^4 D=C_7^8 \cup C_1^8$; the fact that it is a skew PDS corresponding to $C_0^4$ follows from Theorem \ref{thm:C_1^4skewPDS}.
\end{proof}

We next provide a corollary of Theorem \ref{thm:C_1^4skewPDS}, with conditions which are easier to work with.  This formulation is inspired by a result of \cite{DinPotWan} on almost difference sets, which is mentioned in more detail later in this section.
\begin{corollary}\label{cor:C_1^4skewPDS}
Let $\ell \equiv 3 \mod 8$ be a prime power of the form $(c^2/2)+1$ for some $c \in \mathbb{Z}$. Let $q=\ell^2$.  
Then $D=C_3^8 \cup C_5^8$ is a skew PDS in $\mathbb{F}_q$ corresponding to the $(q,(q-1)/4,(q-11+6\ell)/16,(q-3-2\ell)/16)$-PDS $C_0^4$.
\end{corollary}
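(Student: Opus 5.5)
The plan is to deduce the corollary directly from Theorem \ref{thm:C_1^4skewPDS}. This requires checking its hypotheses: that $q=p^m$ with $p\equiv 3 \bmod 8$ and $m\equiv 2 \bmod 4$, and that the proper representations $q=x^2+4y^2=a^2+2b^2$ satisfy $x+a=-2$. Substituting the value of $x$ into the stated parameters then gives the claimed ones.

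\emph{Step 1 (shape of $q$).} Write $\ell=p^k$. Every odd square is $\equiv 1 \bmod 8$, so $p^k\equiv 3 \bmod 8$ forces $k$ odd and $p\equiv 3 \bmod 8$. Hence $q=\ell^2=p^{2k}$ with $m=2k\equiv 2 \bmod 4$, as Theorem \ref{thm:C_1^4skewPDS} requires.

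\emph{Step 2 (the representation $x^2+4y^2$).} As in the proof of Theorem \ref{thm:C_1^4skewPDS} (via Theorem \ref{thm:e=8cyclo}), since $p\equiv 3 \bmod 4$ we have $y=0$ and $x=\pm p^{m/2}=\pm \ell$. The normalisation is $x\equiv 1 \bmod 4$, and $\ell\equiv 3 \bmod 4$, so $x=-\ell$. This matches the Latin square case noted after Theorem \ref{thm:C_0^4PDS}. Substituting $x=-\ell$ into $(q,(q-1)/4,(q-11-6x)/16,(q-3+2x)/16)$ gives exactly $(q,(q-1)/4,(q-11+6\ell)/16,(q-3-2\ell)/16)$.

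\emph{Step 3 (the representation $a^2+2b^2$).} This is the step needing care, because we must identify the \emph{proper} representation with the correct sign normalisation. Since $\ell=c^2/2+1$ is an integer, $c$ is even; write $c=2d$, so $\ell=2d^2+1$. The identity
\[
(2d^2+1)^2=(2d^2-1)^2+2(2d)^2
\]
gives $q=a^2+2b^2$ with $a=\pm(2d^2-1)$ and $b=\pm 2d$. From $\ell\equiv 3 \bmod 8$ we get $2d^2\equiv 2 \bmod 8$, so $d$ is odd and $2d^2-1\equiv 1 \bmod 4$. The normalisation $a\equiv 1 \bmod 4$ (used in the proof of Theorem \ref{thm:C_1^4skewPDS} for $p\equiv 3 \bmod 8$) then selects $a=2d^2-1=\ell-2$.

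For properness we need $p\nmid a$. Since $p\mid \ell$ we have $a\equiv -2 \bmod p$, and $p$ is odd, so $p\nmid a$. By the uniqueness of the proper representation in Theorem \ref{thm:e=8cyclo}, this is the $a$ appearing in Theorem \ref{thm:C_1^4skewPDS}.

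\emph{Step 4 (conclusion).} Now $x+a=-\ell+(\ell-2)=-2$, so Theorem \ref{thm:C_1^4skewPDS} applies and $D=C_3^8\cup C_5^8$ is a skew PDS corresponding to $C_0^4$ with the parameters computed in Step 2.

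The main obstacle is Step 3: confirming that the explicit representation coincides with the one used in the appendix's cyclotomic formulas, both in properness and in the sign convention on $a$. I would check this against the precise statement of Theorem \ref{thm:e=8cyclo}.
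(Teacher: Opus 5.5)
Your proof is correct and follows essentially the same route as the paper: you check $p\equiv 3 \pmod 8$ and $m\equiv 2 \pmod 4$, read off $x=-\ell$ and $y=0$, and use the identity $\ell^2=(\ell-2)^2+2c^2$ (your $(2d^2+1)^2=(2d^2-1)^2+2(2d)^2$) to get $a=\ell-2\equiv 1 \pmod 4$, so that $x+a=-2$. Your explicit check that $p\nmid a$, and your substitution of $x=-\ell$ into the parameters, are details the paper leaves implicit.
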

\begin{proof}
We show that the specified $q$ satisfies the conditions of Theorem \ref{thm:C_1^4skewPDS}. Here $\ell=p^r \equiv 3 \mod 8$; by considering the congruence classes modulo $8$ of the powers of an odd prime, we see that necessarily $p \equiv 3 \mod 8$ and $r$ is odd. So $q=\ell^2=p^{2r}$ satisfies the first condition.  Note that any suitable $c$ satisfies $c \equiv 2 \mod 4$. Since $\ell \not\equiv 1 \mod 4$ but $-\ell \equiv 1 \mod 4$,  the unique proper representation $q=x^2+4y^2$ has $x=-\ell$ and $y=0$. Now since $\ell=(c^2/2)+1$, $q=\ell^2=(\ell-2)^2+2c^2$. This is the unique proper representation $q=a^2+2b^2$, with $a=\ell-2 \equiv 1 \mod 4$ and $b=\pm c$.  So $a+(-\ell)=a+x=-2$.
\end{proof}

\begin{example}\label{ex:skewPDSC0C2}
\begin{itemize}
\item[(i)] For $q=9$ (where $l=(2^2/2)+1=3$) the two unique proper representations are $9=(-3)^2+0=1^2+2(\pm 2)^2$, i.e., $x=-3$, $y=0$, $a=1$ and $b=\pm 2$.  Since $x+a=-3+1=-2$, the construction applies here to show that $D=C_3^8 \cup C_5^8$ is a skew PDS for the $(9,2,1,0)$-PDS $C_0^4$. Here all $C_i^8$ are singleton sets.
Taking primitive element $\alpha \in \mathbb{F}_9$ to be a root of $x^2+x+2$, we see that $C_0^4=\{1,2\}$ and $D=C_3^8 \cup C_5^8=\{\alpha^3,\alpha^5\}=\{2\alpha+2,2\alpha\}$, with $\Delta(C_0^4)=\Delta(D)=\{1,2\}$.  
\item[(ii)] For $q=361$ (where $l=(6^2/2)+1=19$), the two unique proper representations are $361=(-19)^2+4 \cdot 0^2=17^2+2(\pm 6)^2$, i.e., $x=-19$, $y=0$, $a=17$ and $b=\pm 6$.  Since $x+a=-19+17=-2$, the construction applies to show that $C_3^8 \cup C_5^8$ is a skew PDS for the $(361,90,29,20)$-PDS $C_0^4$.

\item[(iii)] For $q=26569$ (where $l=(18^2/2)+1=163$), the two proper representations are $q=(-163)^2+0^2=161^2+2(\pm 18)^2$, i.e.,  $x=-163$, $y=0$, $a=161$ and $b=\pm 18$.  Here $D=C_3^8 \cup C_5^8$ forms a $(26569,6642,1721,1640)$ skew PDS corresponding  to the $(26569,6642,1721,1640)$-PDS $C_0^4$.

\item[(iv)] For $q=59049$ (where $l=(22^2/2)+1=243=3^5$), the two proper representations are $q=(-243)^2+0^2=241^2+2(\pm 22)^2$, i.e.,  $x=-243$, $y=0$, $a=241$ and $b=\pm 22$.  Here $D=C_3^8 \cup C_5^8$ forms a $(59049,14762,3781,3660)$ skew PDS corresponding to the $(59049,14762,3781,3660)$-PDS $C_0^4$.
\end{itemize}
\end{example}

Next, we present a cyclotomic construction for skew Paley PDSs using classes of order $8$.  In fact, a version of this construction has been shown to yield an almost difference set, in Theorem 1 of \cite{DinPotWan}; its proof implicitly contains much of the information needed to establish our result.  We note the similarities to the proof approach of Theorem \ref{thm:C_1^4skewPDS}.

\begin{theorem}\label{thm:e=8skewPaley}
Let $q=p^m$ where $p \equiv 3 \mod 8$ and $m \equiv 2 \mod 4$.
Suppose that $q=x^2+4y^2=a^2+2b^2$
are the unique proper representations of $q$.
If $a=x+4$, then $D=C_0^8 \cup C_1^8 \cup C_2^8 \cup C_5^8$ is a skew Paley $(q,(q-1)/2,(q-5)/4,(q-1)/4)$-PDS corresponding to $C_0^2$.
\end{theorem}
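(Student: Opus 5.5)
We follow the template of the proof of Theorem \ref{thm:C_1^4skewPDS}. First we fix the arithmetic setting, exactly as there. Since $p \equiv 3 \mod 8$ and $m \equiv 2 \mod 4$, Theorem \ref{thm:e=8cyclo} gives $y=0$ and $x=\pm p^{m/2}$. Because $p^{m/2}$ is an odd power of $p$, we have $p^{m/2}\equiv 3 \mod 8$, so $x \equiv 1 \mod 4$ forces $x=-p^{m/2}$. We also have $a \equiv 1 \mod 4$. Lemma \ref{lem:DinPotWan} shows that $2$ is a quartic residue, and $q \equiv 9 \mod 16$, so $f=(q-1)/8$ is odd and $-1 \in C_4^8$. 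This places us in the single case of Theorem \ref{thm:e=8cyclo} already used above, in which the order-$8$ cyclotomic numbers are explicit linear expressions in $q$, $x$, $a$ (with $y=0$, and $b$ possibly appearing).

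Next we expand
$$\Delta(D)=\sum_{u\in U}\Delta(C_u^8)+\sum_{u\neq v\in U}\Delta(C_u^8,C_v^8), \qquad U=\{0,1,2,5\}.$$
By Lemma \ref{lemma:cyc}, $\Delta(C_u^8)=\sum_i (i,0)_8C_{i+u}^8$ and $\Delta(C_u^8,C_v^8)=\sum_i (i,u-v)_8 C_{i+v}^8$. Hence each element of $C_k^8$ appears
$$N_k=\sum_{u\in U}(k-u,0)_8+\sum_{u\neq v\in U}(k-v,u-v)_8$$
times, a sum of $16$ cyclotomic numbers. We then simplify using the symmetries available in this case: $(i,j)_8=(8-i,j-i)_8$, and, since $f$ is odd, $(i,j)_8=(j+4,i+4)_8$. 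The case-specific identities quoted in the previous proof, $(i,0)_8=(i+4,0)_8$ and $(i,2)_8=(i+2,6)_8$, are also used. This reduces each $N_k$ to a combination of the fifteen or so distinct values listed in Theorem \ref{thm:e=8cyclo}.

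Substituting the formulae, we expect
$$N_k=\frac{q-5}{4}+\varepsilon_k(\text{linear form in }x,a)$$
for each $k$. The goal is to verify that $N_0=N_2=N_4=N_6=(q-5)/4$ and $N_1=N_3=N_5=N_7=(q-1)/4$ once $y=0$ and $a=x+4$ are imposed, so that $\Delta(D)=\frac{q-5}{4}C_0^2+\frac{q-1}{4}C_1^2$. A consistency check supports this: summing, $\sum_k N_k\, f=|D|(|D|-1)$ holds automatically, so only the differences between the $N_k$ need to be checked. Any terms involving $b$ should cancel, because $D$ is a union of classes whose pairwise indices are arranged symmetrically. The paper remarks that the proof of Theorem 1 of \cite{DinPotWan} (on the ADS property of this set) already contains these frequency computations, so we can cross-check our reduced $N_k$ against theirs. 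Finally, $C_0^2$ is a Paley PDS because $q \equiv 1 \mod 4$, and $|D|=4f=(q-1)/2$ matches its size.

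The main obstacle is the bookkeeping in the third step. Since $D$ is not invariant under multiplication by $\alpha^4$ (indeed $\alpha^4D=C_4^8\cup C_5^8\cup C_6^8\cup C_1^8$), we cannot reduce from eight frequencies to four by the symmetry argument used earlier. We first try pairing $k$ with $k+4$ via the identity $(i,j)_8=(i+4,j+4)_8$ that holds in this case (if Theorem \ref{thm:e=8cyclo} provides it). If that fails, we will compute all eight $N_k$ directly, or sanity-check them numerically at $q=9$ and $q=3^{10}$, where $a=x+4$ may hold, before doing the general substitution.
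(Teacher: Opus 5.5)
Your route is the same as the paper's: expand $\Delta(D)$ with Lemma \ref{lemma:cyc}, reduce using the order-$8$ table for $f$ odd with $2$ a quartic residue, and then substitute $y=0$, $a=x+4$. Your set-up is correct: $y=0$, $x=-p^{m/2}$, $a\equiv 1 \bmod 4$, $q\equiv 9\bmod 16$, $-1\in C_4^8$, and the $16$-term expression for $N_k$.

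The gap is that you stop exactly where the proof begins. You never evaluate the $N_k$; you only say what you expect them to be. So the identities $N_0=N_2=(q-5)/4$ and $N_1=N_3=(q-1)/4$, which are the whole content of the theorem, are asserted rather than shown.

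Your diagnosis of the obstacle is also off. No cyclotomic identity is needed to pair $k$ with $k+4$. For any set $D$, $\Delta(D)=-\Delta(D)$ as multisets, and $-1\in C_4^8$, so $N_k=N_{k+4}$ automatically. The identity you planned to try first, $(i,j)_8=(i+4,j+4)_8$, is false here. For example, $(0,1)_8=B$ but $(4,5)_8=I$, and $64(B-I)=8-8a+16y\neq 0$ for $q>9$.

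Similarly, ``the $b$-terms cancel by symmetry'' is not an argument. They cancel because $K$ and $M$, the only letters involving $b$, occur equally often in each $N_k$; you have to see this from the computation.

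Here is the bookkeeping you deferred. Use the letters of Theorem \ref{thm:e=8cyclo} in the case $f$ odd, where the letter $D$ means the cyclotomic number $(0,3)_8$, not the set. Reducing your sums gives
\begin{align*}
N_0&=A+B+C+D+F+H+2I+2J+K+L+M+2N+O,\\
N_1&=3A+B+D+E+F+H+3I+3J+K+M,\\
N_2&=A+B+D+F+G+H+2I+2J+K+L+M+2N+O,\\
N_3&=C+G+I+J+2K+3L+2M+2N+3O.
\end{align*}
Substituting the ``$2$ a quartic residue'' column with $y=0$ gives
\[
64N_0=64N_2=16q-48+8x-8a,\qquad 64N_1=16q-80-16x+16a,\qquad 64N_3=16q-16.
\]
These agree with the paper's stated values once $y=0$. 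The hypothesis $a=x+4$ then gives $N_0=N_2=(q-5)/4$ and $N_1=N_3=(q-1)/4$. As a check, $\sum_k N_k\cdot f=4f(4f-1)$.

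For numerical checks, use $q=9$, $121$ or $729$. Your suggested $q=3^{10}$ does not satisfy $a=x+4$: there $x=-243$ and $a=241$.
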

\begin{proof}
By Theorem \ref{thm:e=8cyclo}, since $p \not\equiv 1 \mod 4$, the unique proper representation $q=x^2+4y^2$ has $x=\pm p^{m/2}$ and $y=0$.  Since $p \equiv 3 \mod 8$, the unique proper representation $q=a^2+2b^2$ has $a \equiv 1 \mod 4$.  By Lemma \ref{lem:DinPotWan}, $2$ is a quartic residue in $\mathbb{F}_q$.

Let $D=C_0^8 \cup C_1^8 \cup C_2^8 \cup C_5^8$.  By analogous calculations to those in the proof of Theorem \ref{thm:C_1^4skewPDS}, and as performed in the proof of Theorem 1 of \cite{DinPotWan}, it can be established that the elements of $C_i^8$ each occur $N_i$  times, where $N_i=N_{i+4}$ ($0 \leq i \leq 3$) and:
\begin{itemize}
\item $N_0=(16q-48+8x-8a-48y)/64$;
\item $N_1=(16q-80-16x+16a-32y)/64$;
\item $N_2=(16q-48+8x-8a-16y)/64$;
\item $N_3=(16q-16)/64$.
\end{itemize}
Since $y=0$ and $a=x+4$, clearly frequencies $N_0$ and $N_2$ agree and equal $(16q-48+8x-8a)/64=(16q-80)/64=(q-5)/4$. So each element of $C_0^2=C_0^8 \cup C_2^8 \cup C_4^8 \cup C_6^8$ occurs $(q-5)/4$ times in $\Delta(D)$. Further,  $16q-80-16x+16a-32y=16q-16$, so $N_1$ equals $N_3$, and this value equals $(q-1)/4$.  So each element of $C_1^2=C_1^8 \cup C_3^8 \cup C_5^8 \cup C_7^8$ occurs $(q-1)/4$ times in $\Delta(D)$.
\end{proof}

The complement and negative of the above construction also yield skew PDSs:
\begin{theorem}
Let \(q=p^m\), where \(p\equiv 3 \pmod 8\) and \(m\equiv 2 \pmod 4\). Suppose that $q=x^2+4y^2=a^2+2b^2$ are the unique proper representations of \(q\) with \(a=x+4\), and let
$D=C_0^8\cup C_1^8\cup C_2^8\cup C_5^8$. 
	Then 
	\begin{itemize}
		\item[i)]
		$
		G\setminus D
		=
		C_3^8\cup C_4^8\cup C_6^8\cup C_7^8\cup\{0\}
		$
		is a skew PDS corresponding to the
	$	
		(q,(q+1)/2,(q+3)/4,(q-1)/4)$-PDS 
$ 		C_1^2\cup\{0\}.
		$
		\item[ii)]
		$
		-D=C_1^8\cup C_4^8\cup C_5^8\cup C_6^8
		$ 
		is a skew PDS corresponding to the Paley PDS \(C_0^2\).
	\end{itemize}
\end{theorem}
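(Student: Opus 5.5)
Both parts reduce to facts already established for $D$ in Theorem \ref{thm:e=8skewPaley}, together with the position of $-1$ among the cyclotomic classes of order $8$.

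For part (i), we apply Theorem \ref{thm:skewPDScomp}(ii). By Theorem \ref{thm:e=8skewPaley}, $D$ is a skew PDS corresponding to the Paley $(q,(q-1)/2,(q-5)/4,(q-1)/4)$-PDS $C_0^2$. Hence $G\setminus D$ is a skew PDS corresponding to $G\setminus C_0^2 = C_1^2\cup\{0\}$. Its parameters are $(v, v-k, v-2k+\mu, v-2k+\lambda)$ with $v=q$ and $k=(q-1)/2$, so $v-2k=1$. This gives $(q,(q+1)/2,1+(q-1)/4,1+(q-5)/4)=(q,(q+1)/2,(q+3)/4,(q-1)/4)$, as stated. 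Finally, since $D$ is the union of the classes $C_0^8,C_1^8,C_2^8,C_5^8$, its complement in $G$ is $C_3^8\cup C_4^8\cup C_6^8\cup C_7^8\cup\{0\}$.

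For part (ii), the plan mirrors the proof for the order-$8$ Latin square construction above. As shown in the proof of Theorem \ref{thm:C_1^4skewPDS}, $p\equiv 3 \bmod 8$ and $m\equiv 2\bmod 4$ give $q\equiv 9\bmod 16$. Then $(q-1)/2\equiv 4 \bmod 8$, so $-1=\alpha^{(q-1)/2}\in C_4^8$ and $-C_i^8=C_{i+4}^8$. Therefore $-D=\alpha^4 D=C_4^8\cup C_5^8\cup C_6^8\cup C_1^8$, which is the set in the statement.

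Multiplication by $\alpha^4$ is a group automorphism of $(\mathbb{F}_q,+)$, so $\Delta(\alpha^4D)=\alpha^4\Delta(D)$. Since $\Delta(D)=\frac{q-5}{4}C_0^2+\frac{q-1}{4}C_1^2$ and $\alpha^4\in C_0^2$ fixes both $C_0^2$ and $C_1^2$, we get $\Delta(-D)=\Delta(D)$. Alternatively, $\Delta(-D)=-\Delta(D)=\Delta(D)$ holds directly, because $\Delta(D)$ is always closed under negation. Either way, $-D$ is a skew PDS corresponding to $C_0^2$.

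The only step needing care is confirming $q\equiv 9\bmod 16$. Write $q=(p^2)^r$ with $r$ odd. Since $p\equiv 3\bmod 8$, we have $p^2\equiv 9\bmod 16$. Because $9^2\equiv 1\bmod 16$ and $r$ is odd, $9^r\equiv 9\bmod 16$. Beyond this, the argument is routine.
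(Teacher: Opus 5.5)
Your proof is correct and matches the paper's approach. The paper gives no separate proof for this theorem, but its proof of the analogous Latin-square-type result does exactly what you do: (i) comes directly from Theorem \ref{thm:skewPDScomp}(ii), and (ii) comes from $q\equiv 9\pmod{16}$, $-1\in C_4^8$ and $-D=\alpha^4D$ together with Theorem \ref{thm:e=8skewPaley}. Your side remark that $\Delta(-D)=\Delta(D)$ for every subset of an abelian group is also right, and it shows that $q\equiv 9\pmod{16}$ is needed only to identify $-D$ as $C_1^8\cup C_4^8\cup C_5^8\cup C_6^8$.
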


The following corollary uses the same (easier-to-use) condition as for the ADS in \cite{DinPotWan}.

\begin{corollary}\label{cor:C0C1C2C5easier}
Let $q=\ell^2$ where $\ell=d^2+2 \equiv 3 \mod 8$ is a prime power and $d \in \mathbb{Z}$.  Then the set $D=C_0^8 \cup C_1^8 \cup C_2^8 \cup C_5^8$ is a skew Paley PDS corresponding to $C_0^2$.
\end{corollary}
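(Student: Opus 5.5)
The plan is to reduce Corollary \ref{cor:C0C1C2C5easier} to Theorem \ref{thm:e=8skewPaley}, in the same way that Corollary \ref{cor:C_1^4skewPDS} was reduced to Theorem \ref{thm:C_1^4skewPDS}. Three things must be checked: that $q=p^m$ with $p\equiv 3 \bmod 8$ and $m\equiv 2 \bmod 4$; that we can identify the unique proper representations $q=x^2+4y^2=a^2+2b^2$; and that these satisfy $a=x+4$.

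For the first condition, write $\ell=p^r$. Odd squares are $1 \bmod 8$, so $\ell\equiv 3\bmod 8$ forces $p\equiv 3 \bmod 8$ and $r$ odd. Hence $q=\ell^2=p^{2r}$ with $2r\equiv 2 \bmod 4$.

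For the representation $x^2+4y^2$, note that $p\not\equiv 1\bmod 4$. So, as in the proof of Theorem \ref{thm:e=8skewPaley}, we have $y=0$ and $x=\pm\ell$. The normalisation $x\equiv 1 \bmod 4$ together with $\ell\equiv 3\bmod 4$ gives $x=-\ell$.

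For the representation $a^2+2b^2$, use $\ell=d^2+2$. Then
$$q=\ell^2=(d^2+2)^2=(d^2-2)^2+8d^2=(d^2-2)^2+2(2d)^2 .$$
Since $\ell\equiv 3\bmod 8$, $d$ is odd, so $d^2\equiv 1\bmod 8$ and $a:=\ell-4=d^2-2\equiv -1\equiv 3 \bmod 4$. This is the wrong residue class, because the theorem requires $a\equiv 1\bmod 4$. The sign fix is to take $a=-(d^2-2)=4-\ell$, which is $\equiv 1\bmod 4$ (since $4-\ell\equiv 4-3\equiv 1 \bmod 4$), and $b=\pm 2d$. Then
$$a=4-\ell=x+4,$$
which is exactly the hypothesis of Theorem \ref{thm:e=8skewPaley}, and the corollary follows.

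The main obstacle is confirming that $(4-\ell,\pm 2d)$ really is \emph{the} proper representation used in the Appendix theorem, rather than an improper one or one differing by the sign convention. Here "proper" presumably means $\gcd(a,p)=1$ (or the analogous condition in Theorem \ref{thm:e=8cyclo}). I would check it as follows. If $p\mid 4-\ell$, then since $p\mid \ell$ we get $p\mid 4$, which is impossible. Also $\gcd(b,p)=1$, since $p\mid 2d$ together with $\ell=d^2+2$ would force $p\mid 2$. Uniqueness of the proper representation with $a\equiv 1\bmod 4$ then identifies it. If the Appendix instead normalises $a$ differently (for instance via a condition involving $p$), I would reconcile it against the analogous step in Corollary \ref{cor:C_1^4skewPDS}, where $a=\ell-2\equiv1\bmod4$ was accepted directly.

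A sanity check with $\ell=3$ ($d=1$, $q=9$) gives $x=-3$ and $a=1=x+4$, consistent with $9=1^2+2\cdot 2^2$ and with Example \ref{ex:skewPDSC0C2}(i).
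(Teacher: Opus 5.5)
Your proposal is correct and follows essentially the same route as the paper: reduce to Theorem~\ref{thm:e=8skewPaley} by showing $x=-\ell$, $y=0$, and that $q=(4-\ell)^2+2(2d)^2$ is the proper representation with $a=4-\ell\equiv 1 \pmod 4$, so $a=x+4$ (the paper writes $c=2d$). You also check explicitly that $p\equiv 3 \pmod 8$ with $r$ odd and that the representation is proper, which the paper leaves implicit. Your identity $(d^2+2)^2=(d^2-2)^2+2(2d)^2$ incidentally corrects the paper's misprint $\ell=(c^2/4)+1$, which should read $\ell=(c^2/4)+2$.
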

\begin{proof}
We show that the given $q$ satisfies the conditions of Theorem \ref{thm:e=8skewPaley}.  We take the equivalent condition: $q=\ell^2$ where $\ell=(c/2)^2+2 \equiv 3 \mod 8$ is a prime power, $c \in \mathbb{Z}$.  Clearly the first condition holds.
Since $\ell \not\equiv 1 \mod 4$ but $-\ell \equiv 1 \mod 4$,  the unique proper representation $q=x^2+4y^2$ has $x=-\ell$ and $y=0$. Now since $\ell=(c^2/4)+1$, $q=\ell^2=(-\ell+4)^2+2c^2$. This is the unique proper representation $q=a^2+2b^2$, with $a=-\ell+4 \equiv 1 \mod 4$ and $b=\pm c$.  So $x+4=(-\ell)+4=a$.
\end{proof}

\begin{example}
We give an explicit example of the construction of Theorem \ref{thm:e=8skewPaley} with $q=9$.  Here $9=(-3)^2+0=1^2+2(\pm 2)^2$ (also $q=3^2$ where $3=1^2+2$).  Let $\alpha$ be the root of the primitive polynomial $x^2+2x+2 \in \mathbb{F}_3[x]$, so $\alpha^2=\alpha+1$.  The cyclotomic classes of order $8$ are singleton sets, with $C_0^8=\{1\}, C_1^8=\{\alpha\}, C_2^8=\{\alpha+1\}$ and $C_0^5=\{2\alpha\}$.  It can be verified directly that for $D=\{1,\alpha,\alpha+1,2\alpha\}$, the multiset $\Delta(D)$ comprises $2$ copies of $\{\alpha,\alpha+2,2\alpha,2\alpha+1\}=C_1^2$ and $1$ copy of $\{1,2\alpha+1,2\alpha+2\}=C_0^2$. 
\end{example}

\begin{example}
Table \ref{tab:skewC14} gives values of $q<10^8$ for which the construction of Corollary \ref{cor:C0C1C2C5easier} yields a skew Paley PDS corresponding to $C_0^2$.

\begin{table}[h]
	\centering
	\[
	\begin{array}{ccc}
		\hline
		q=\ell^2 & \ell=d^2+2 & \text{Skew PDS parameters} \\
		\hline
		9 & 3=1^2+2 & (9,4,1,2) \\
		121 & 11=3^2+2 & (121,60,29,30) \\
		729 & 27=5^2+2 & (729,364,181,182) \\
		6889 & 83=9^2+2 & (6889,3444,1721,1722) \\
		51529 & 227=15^2+2 & (51529,25764,12881,12882) \\
		196249 & 443=21^2+2 & (196249,98124,49061,49062) \\
		1190281 & 1091=33^2+2 & (1190281,595140,297569,297570) \\
		2319529 & 1523=39^2+2 & (2319529,1159764,579881,579882) \\
		4108729 & 2027=45^2+2 & (4108729,2054364,1027181,1027182) \\
		10569001 & 3251=57^2+2 & (10569001,5284500,2642249,2642250) \\
		43072969 & 6563=81^2+2 & (43072969,21536484,10768241,10768242) \\
		96098809 & 9803=99^2+2 & (96098809,48049404,24024701,24024702) \\
		\hline
	\end{array}
	\]
	\caption{Skew PDSs from Corollary \ref{cor:C0C1C2C5easier}.}
	\label{tab:skewC14}
\end{table} 
\end{example}

\section{Disjoint and external partial difference families}

In \cite{HucJoh}, the following structures were introduced, which simultaneously generalise PDSs, disjoint difference families and external difference families.  For information on these difference families, see for example \cite{Bur,PatSti}.

\begin{definition}\label{DPDF}
Let $\mathcal D$ be a collection of $m$ disjoint $k$-subsets $\{ D_1, \ldots, D_m \}$ of $G^*$ and let $S=\cup_{i=1}^m D_i$.  Then
\begin{itemize}
\item[(i)]  $\mathcal D$ is an $(n,m, k, \lambda, \mu)$ Disjoint Partial Difference Family (DPDF) of $G$ if the following multiset equation holds: 
$$ \cup_{i=1}^m \Delta(D_i)= \lambda S + \mu (G^* \setminus S).$$
If $\lambda=\mu$, this is an $(n,m,k,\lambda)$ Disjoint Difference Family (DDF). If $m=1$ this is a $(n,k,\lambda,\mu)$ Partial Difference Set.
\item[(ii)] $\mathcal D$ is an $(n,m, k, \lambda, \mu)$ External Partial Difference Family (EPDF) of $G$ if the following multiset equation holds: 
$$ \cup_{i,j: i \neq j} \Delta(D_i, D_j)= \lambda S + \mu (G^* \setminus S).$$
If $\lambda=\mu$, this is an $(n,m,k,\lambda)$ External Difference Family (EDF).
\end{itemize}
\end{definition}
Various DPDF/EPDF constructions are known, including partitioning a cyclotomic class which itself forms a PDS or DS into smaller cyclotomic classes \cite{HucJoh}.

In the ``Further Work" section of \cite{HucJoh}, the following definition was presented, and it was observed that construction methods for these would be of interest.

\begin{definition}
Let $G$ be a group of order $n$ and let $T$ be a subset of $G^*$. 
\begin{itemize}
\item[(i)] A collection of $m$ disjoint subsets $\mathcal{D} = \{ D_1, \ldots, D_m \}$ in $G^*$, where each $|D_i|=k_i$, forms an $(n,m, k_1, \ldots, k_m; \lambda, \mu)$ Disjoint Partial Difference Family of $G$ (relative to $T$) if the following multiset equation holds: 
$$ \cup_i \Delta(D_i)= \lambda(T) + \mu (G^* \setminus T). $$
If all $|k_i|=k$, this is an $(n,m,k; \lambda, \mu)$-DPDF relative to $T$.
\item[(ii)] A collection of disjoint $m$ subsets $\mathcal{D} = \{ D_1, \ldots, D_m \}$ in $G^*$, where each $|D_i|=k_i$, forms an $(n,m, k_1, \ldots, k_m; \lambda, \mu)$ External Partial Difference Family (relative to $T$) if the following multiset equation holds: 
$$ \cup_{i \neq j} \Delta(D_i, D_j)= \lambda(T) + \mu (G^* \setminus T).$$
If all $|k_i|=k$, this is an $(n,m,k; \lambda, \mu)$-EPDF relative to $T$.
\end{itemize}
\end{definition}
We view the case of a standard DPDF/EPDF partitioning $T$ or its complement as a trivial case of a DPDF/EPDF relative to $T$. 

\subsection{DPDFs and EPDFs relative to Paley PDSs}

In this section, we obtain non-trivial constructions of DPDFs and EPDFs relative to Paley PDSs. We show that these can be built from cyclotomic classes and their unions; some constructions arise by directly using skew PDSs, while others take a different approach.  Any construction relative to $C_0^2$ is relative to $C_1^2$ and vice versa, but will be stated as relative to $C_0^2$ for consistency. 

For a collection of disjoint $k$-subsets $\mathcal{D}=\{D_1,\ldots, D_m\}$ of $G^*$, we will denote by $\mathrm{Int}(\mathcal{D})$ the multiset $\cup_i \Delta(D_i)$, and by $\mathrm{Ext}(\mathcal{D})$ the multiset $\cup_{i \neq j} \Delta(D_i,D_j)$.  We have the following multiset equation:
$$ \mathrm{Int}(\mathcal{D}) + \mathrm{Ext}(\mathcal{D})=\Delta(S)$$
where $S=\cup_{i=1}^m D_i$.

\subsubsection{Relative DPDFs/EPDFs from skew PDSs}
A skew PDS corresponding to PDS $A$ is clearly a one-set DPDF relative to $A$.  Unlike in the skew PDS setting (Proposition \ref{prop:noC_iskewPDS}), it is possible for a single cyclotomic class to be a one-set DPDF relative to a different class, since here the set-sizes can be different.

\begin{example}
Let $q \equiv 9 \mod 16$ and let $q=x^2+4y^2=a^2+2b^2$ be the unique proper representations of \(q\). If $2$ is a quartic residue and $a=1$, then $(2i,0)_8=(q-1-15)/2x$ and $(2i+1,0)_8=(q-3+2x)/64$ ($0 \leq i \leq 3$), and so $C_0^8$ is a $(q,1,(q-1)/8;(q-15-2x)/64,(q-3+2x)/64)$-DPDF relative to $C_0^2$. For example, in $\mathbb{F}_{1801}$, $C_0^8$ is a $(1801,1,225,29,27)$-DPDF relative to $C_0^2$.
\end{example}

We have the following general result concerning families of sets.
\begin{proposition}\label{prop:generalPDSswap}
Let $G$ be an abelian group of order $v$ and let $\{A_1,\ldots, A_m\}$ be a family of pairwise disjoint subsets of $G^*$.
For each $i$, let $D_i\subseteq G^*$ be a subset of size $k_i$ satisfying
	\[
	\Delta(D_i)
	=
	\lambda_i A_i^*+\mu_i(G^*\setminus A_i).
	\]
	If
	$
	\lambda_i-\mu_i=\delta
	$
	for all $i$, and all $D_i$ are pairwise disjoint, then
	$
	\mathcal D=\{D_1,\ldots,D_m\}
	$ 
	is a
	$ 
	(v,m,k_1,\ldots,k_m;
	\delta+\sum_{i=1}^m\mu_i,\sum_{i=1}^m\mu_i)$-DPDF 
	relative to
	$\cup_{i=1}^m A_i.$ 
	
\end{proposition}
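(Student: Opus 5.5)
Since the $D_i$ are pairwise disjoint subsets of $G^*$ with $|D_i|=k_i$, the family $\mathcal D=\{D_1,\ldots,D_m\}$ already has the structure required of a relative DPDF. It therefore suffices to compute $\mathrm{Int}(\mathcal D)=\cup_{i=1}^m \Delta(D_i)$ and show it equals $(\delta+\sum_i\mu_i)\,T+(\sum_i\mu_i)(G^*\setminus T)$, where $T=\cup_{i=1}^m A_i$. I will do this by computing multiplicities pointwise: fix $g\in G^*$ and count how often $g$ occurs in $\mathrm{Int}(\mathcal D)$, which is the sum over $i$ of its multiplicity in $\Delta(D_i)$.

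First I simplify the hypothesis. Each $A_i\subseteq G^*$, so $A_i^*=A_i$. Hence $g$ occurs in $\Delta(D_i)$ exactly $\lambda_i$ times if $g\in A_i$ and $\mu_i$ times otherwise. Equivalently, the multiplicity of $g$ in $\Delta(D_i)$ is $\mu_i+\delta\cdot[g\in A_i]$, using $\lambda_i-\mu_i=\delta$; here the uniformity of $\delta$ across all $i$ is essential.

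Summing over $i$, the multiplicity of $g$ in $\mathrm{Int}(\mathcal D)$ is $\sum_i\mu_i+\delta\cdot\#\{i: g\in A_i\}$. Since the $A_i$ are pairwise disjoint, $\#\{i:g\in A_i\}$ equals $1$ if $g\in T$ and $0$ if $g\in G^*\setminus T$. So elements of $T$ occur $\delta+\sum_i\mu_i$ times and elements of $G^*\setminus T$ occur $\sum_i\mu_i$ times, which is exactly the defining multiset equation of a DPDF relative to $T$ with the stated parameters.

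There is no real obstacle. The only points needing care are the two places the hypotheses enter: disjointness of the $A_i$, which prevents any $g$ from picking up $\delta$ more than once, and constancy of $\lambda_i-\mu_i$, which lets the excess over $\sum_i\mu_i$ be a single value $\delta$. I will also note explicitly that $A_i\subseteq G^*$, so that $A_i^*=A_i$ and the case $g=0$ never arises.
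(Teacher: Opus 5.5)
Your proposal is correct and follows essentially the same route as the paper: a pointwise multiplicity count in $\mathrm{Int}(\mathcal D)$, using disjointness of the $A_i$ so that each $g$ lies in at most one $A_j$, and the constancy of $\lambda_i-\mu_i=\delta$ to obtain the uniform frequency $\delta+\sum_{i=1}^m\mu_i$ on $\cup_{i=1}^m A_i$ and $\sum_{i=1}^m\mu_i$ off it. Your indicator formulation $\mu_i+\delta\cdot[g\in A_i]$ is just a compact restatement of the paper's computation $\lambda_j+\sum_{i\neq j}\mu_i$.
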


\begin{proof}
	Let
	$
	A=\cup_{i=1}^m A_i.
	$	Since the sets $A_1,\ldots,A_m$ are pairwise disjoint, every element of
	$A$ belongs to exactly one of them. Let $u\in A$, and suppose that $u\in A_j$. Then $u$ occurs with
	multiplicity $\lambda_j$ in $\Delta(D_j)$, and with multiplicity
	$\mu_i$ in $\Delta(D_i)$ for each $i\neq j$. Hence $u$ occurs in
	$
	\mathrm{Int}(\mathcal D)=\sum_{i=1}^m\Delta(D_i)
	$
	precisely
	$
	\lambda_j+\sum_{i\neq j}\mu_i
	=
	(\lambda_j-\mu_j)+\sum_{i=1}^m\mu_i
	=
	\delta+\sum_{i=1}^m\mu_i
	$
	times.  
	Now let $u\in G^*\setminus A$. Then $u\notin A_i$ for every $i$, so
	$u$ occurs with multiplicity $\mu_i$ in each $\Delta(D_i)$. Therefore
	$u$ occurs in $\mathrm{Int}(\mathcal D)$ precisely
	$
	\sum_{i=1}^m\mu_i
	$ 
	times. 
\end{proof}

In \cite{HucJoh}, for $q \equiv 1 \mod 4$, DPDFs are built by partitioning the PDS $C_0^2$ into smaller ``even" cyclotomic classes $\{C_{2i}^e: 0\leq i<e/2\}$ for even $e$ (or $C_1^2$ into ``odd" classes); in some cases, the individual classes are PDSs.  As an illustration of Proposition \ref{prop:generalPDSswap}, we can take this set-up and ``swap" PDSs with skew PDSs to obtain DPDFs relative to $C_0^2$.

\begin{theorem}\label{thm:DPDF0237}
Let $q=p^m$, where $p\equiv 3\pmod 8$ is a prime and $m\equiv 2\pmod 4$.
Suppose that $q=x^2+4y^2=a^2+2b^2$ are the proper representations of $q$ and suppose $x+a=-2$.  Then
\begin{itemize}
\item[(i)]  $\mathcal D_1=\{C_3^8\cup C_5^8,\ C_2^8\cup C_6^8\}$ is a $(q,2,(q-1)/4,(q-7-2x)/8),(q-3+2x)/8)$-DPDF
	relative to $C_0^2$.
\item[(ii)] $\mathcal D_2=\{C_0^8\cup C_2^8,\ C_3^8\cup C_7^8\}$ is a $(q,2,(q-1)/4,(q-3+2x)/8,(q-7-2x)/8)$-DPDF
	relative to $C_0^2$.
\end{itemize}
\end{theorem}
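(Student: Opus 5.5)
We would derive both parts from Proposition \ref{prop:generalPDSswap}. In each family one set is a skew PDS supplied by Theorem \ref{thm:C_1^4skewPDS}, and the other is a single cyclotomic class of order $4$, which is itself a PDS with the same parameters by Theorem \ref{thm:C_0^4PDS}. The hypotheses ($p \equiv 3 \bmod 8$, $m \equiv 2 \bmod 4$, $x+a=-2$) are exactly those of Theorem \ref{thm:C_1^4skewPDS}. Throughout, write $\lambda=(q-11-6x)/16$ and $\mu=(q-3+2x)/16$, and recall that $C_j^4 = C_j^8 \cup C_{j+4}^8$.

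For (i), set $D_1=C_3^8\cup C_5^8$ and $A_1=C_0^4$. By Theorem \ref{thm:C_1^4skewPDS}, $\Delta(D_1)=\lambda A_1+\mu(G^*\setminus A_1)$. Next set $D_2=C_2^8\cup C_6^8=C_2^4$ and $A_2=C_2^4$. Since $C_2^4=\alpha^2C_0^4$, multiplying the PDS equation of $C_0^4$ by $\alpha^2$ gives $\Delta(D_2)=\lambda A_2+\mu(G^*\setminus A_2)$.

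We then check the hypotheses of Proposition \ref{prop:generalPDSswap}:
\begin{itemize}
\item[(a)] $A_1,A_2$ are disjoint subsets of $G^*$ with $A_1\cup A_2=C_0^4\cup C_2^4=C_0^2$;
\item[(b)] $D_1,D_2$ are disjoint, since the class indices $\{3,5\}$ and $\{2,6\}$ do not meet;
\item[(c)] $\lambda_i-\mu_i=\lambda-\mu=\delta$ is the same for $i=1,2$.
\end{itemize}
The proposition then shows that $\mathcal D_1$ is a DPDF relative to $C_0^2$, with first parameter $\delta+2\mu=\lambda+\mu$ and second parameter $2\mu$. A short computation gives $\lambda+\mu=(2q-14-4x)/16=(q-7-2x)/8$ and $2\mu=(q-3+2x)/8$, as claimed. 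Both sets have size $(q-1)/4$.

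For (ii), set $D_1=C_0^8\cup C_2^8$. By the second claim of Theorem \ref{thm:C_1^4skewPDS} (with $i=0$), it is a skew PDS corresponding to $A_1=C_1^4$ with the same $\lambda,\mu$. Set $D_2=C_3^8\cup C_7^8=C_3^4=A_2$, which is a PDS with parameters $\lambda,\mu$ by multiplying $C_0^4$ by $\alpha^3$. The index sets $\{0,2\}$ and $\{3,7\}$ are disjoint, and $A_1\cup A_2=C_1^4\cup C_3^4=C_1^2$.

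Proposition \ref{prop:generalPDSswap} then makes $\mathcal D_2$ a DPDF relative to $C_1^2$ with parameters $((q-7-2x)/8,(q-3+2x)/8)$. Since $G^*\setminus C_1^2=C_0^2$, this is the same as being a DPDF relative to $C_0^2$ with the two parameters swapped, namely $((q-3+2x)/8,(q-7-2x)/8)$. This matches the statement.

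The only real obstacle is bookkeeping. One must identify the unions of order-$8$ classes correctly with order-$4$ classes, track which PDS each skew PDS corresponds to under multiplication by powers of $\alpha$, and handle the final reinterpretation relative to $C_0^2$ rather than $C_1^2$ in part (ii). No new cyclotomic-number computation is needed.
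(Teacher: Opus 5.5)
Your proposal is correct and is essentially the paper's argument: the paper presents this theorem as an illustration of Proposition~\ref{prop:generalPDSswap} and carries out the same frequency count, combining the skew PDS $C_3^8\cup C_5^8$ (corresponding to $C_0^4$) with the PDS $C_2^4$ to obtain $\lambda+\mu$ copies of $C_0^2$ and $2\mu$ copies of $C_1^2$. Your explicit treatment of (ii) fills in the case the paper only calls ``similar'': you use the second claim of Theorem~\ref{thm:C_1^4skewPDS} with $i=0$, and then reinterpret a DPDF relative to $C_1^2$ as one relative to $C_0^2$ with the two parameters swapped.
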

\begin{proof}
In this setting, by Theorem \ref{thm:C_1^4skewPDS}, $C_3^8 \cup C_5^8$ is a skew PDS corresponding to the $(q,(q-1)/4,(q-11-6x)/16,(q-3+2x)/16)$-PDS $C_0^4$. So $\Delta(C_3^8 \cup C_5^8)$ comprises $(q-11-6x)/16$ copies of $C_0^4$ and $(q-3+2x)/16$ copies of $C_1^4 \cup C_2^4 \cup C_3^4$. Moreover, $C_2^8 \cup C_6^8=C_2^4$; by Theorem \ref{thm:C_0^4PDS}, $C_2^4$ is a  $(q,(q-1)/4,(q-11-6x)/16,(q-3+2x)/16)$-PDS. So $\Delta(C_2^8 \cup C_6^8)$ comprises $(q-11-6x)/16$ copies of $C_2^4$ and $(q-3+2x)/16$ copies of $C_0^4 \cup C_1^4 \cup C_3^4$.  Hence $\mathrm{Int}(\mathcal{D})$ comprises $(q-11-6x)/16 + (q-3+2x)/16=(q-7-2x)/8$ copies of $C_0^4 \cup C_2^4=C_0^2$, and $2(q-3+2x)/16=(q-3+2x)/8$ copies
of $C_1^4 \cup C_3^4=C_1^2$.
The proof for (ii) is similar.
\end{proof}

Next, we consider a different generalisation of the partitioning approach, by considering how relative DPDFs and EPDFs can partition skew PDSs.
\begin{proposition}
Let $T \subseteq G^*$ be an $(n,mk,\lambda,\mu)$-PDS.  Let $\mathcal{D}=\{D_1,\ldots,D_m\}$ be a collection of disjoint $k$-subsets of $G^*$.  If $\mathcal{D}$ is 
\begin{itemize}
\item[(i)] a DPDF relative to $T$ and an EPDF relative to $T$; or
\item[(ii)] a DPDF relative to $T$ and an EDF; or
\item[(iii)] a DDF and an EPDF relative to $T$
\end{itemize}    
then $\cup_i D_i$ is a skew PDS relative to $T$.
\end{proposition}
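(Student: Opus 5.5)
The plan is to use the decomposition $\mathrm{Int}(\mathcal{D})+\mathrm{Ext}(\mathcal{D})=\Delta(S)$, where $S=\cup_{i=1}^m D_i$, and then appeal to Theorem \ref{thm:skewPDScomp}(i). Since the $D_i$ are disjoint $k$-subsets of $G^*$, we have $|S|=mk=|T|$ and $0\notin S$.

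First I would put each hypothesis into one common form. A DPDF relative to $T$ means $\mathrm{Int}(\mathcal{D})=\lambda_1 T+\mu_1(G^*\setminus T)$ for some $\lambda_1,\mu_1$. A DDF means $\mathrm{Int}(\mathcal{D})=\lambda_1 G^*$, which is the same form with $\mu_1=\lambda_1$. Likewise, an EPDF relative to $T$ gives $\mathrm{Ext}(\mathcal{D})=\lambda_2T+\mu_2(G^*\setminus T)$, and an EDF gives this with $\lambda_2=\mu_2$. In each of the cases (i)--(iii), adding the two equations yields
$$\Delta(S)=(\lambda_1+\lambda_2)T+(\mu_1+\mu_2)(G^*\setminus T).$$
So $\Delta(S)$ has the difference profile of $T$, although at this stage with the constants $\lambda'=\lambda_1+\lambda_2$ and $\mu'=\mu_1+\mu_2$ rather than $\lambda$ and $\mu$.

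The main obstacle is showing that $\lambda'=\lambda$ and $\mu'=\mu$. Counting alone does not settle this: comparing $|\Delta(S)|=mk(mk-1)=|T|\lambda+(n-1-|T|)\mu$ gives only one linear relation between $\lambda'-\lambda$ and $\mu'-\mu$. I would instead invoke Theorem \ref{thm:skewPDScomp}(i) applied to $S$ and $T$. It says that exactly one of $T$ and $T\cup\{0\}$ is a PDS with the same parameters as $S$ and is the PDS corresponding to $S$. Since $T\cup\{0\}$ has size $mk+1\neq |S|$, the PDS in question must be $T$. Its parameters $(n,mk,\lambda,\mu)$ then agree with those of $S$, which forces $\lambda'=\lambda$ and $\mu'=\mu$, so $\Delta(S)=\lambda T^*+\mu(G^*\setminus T)$.

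This is exactly the definition of $S$ being a skew PDS corresponding to $T$. If a self-contained argument for the equality of constants were wanted instead, I would fall back on characters: apply a nonprincipal character $\chi$ to the group ring identity $SS^{(-1)}=mk+\lambda'T+\mu'(G-1-T)$. Then compare the resulting values of $|\chi(S)|^2$ with the known character values of the PDS $T$, using the case $\chi(T)$ equal to each of its two eigenvalues. Together with the counting relation, this should pin down $\lambda'$ and $\mu'$.
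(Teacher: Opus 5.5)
Your first step is the paper's entire proof. The paper writes $\mathrm{Int}(\mathcal{D})$ and $\mathrm{Ext}(\mathcal{D})$ in two-valued form relative to $T$, adds them, and stops at $\Delta(S)=(\lambda_1+\lambda_2)T+(\mu_1+\mu_2)(G^*\setminus T)$. You are right that this is not yet the conclusion, since the constants must be exactly $\lambda,\mu$. However, your repair does not work. Theorem~\ref{thm:skewPDScomp}(i) has the standing hypothesis that $D$ is already a skew PDS, and ``the parameters of $S$'' only make sense once that is known. The theorem does not say that a set whose difference multiset is constant on $T$ and on $G^*\setminus T$ must carry the constants of $T$, so applying it to $S$ is circular. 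The character fallback fails too. It only gives $|\chi(S)|^2=mk-\mu'+(\lambda'-\mu')\chi(T)\ge 0$ at the two eigenvalues of $T$, which leaves many pairs $(\lambda',\mu')$ open.

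No argument can close this gap, because the statement is false as written. Take $q=37\equiv 5 \pmod 8$ with $37=1^2+6^2$, so $s=1$ and $t=\pm 6$. The order-$4$ cyclotomic numbers give $(i,0)_4=2$ for all $i$. Hence $C_0^4$ and $C_3^4$ are $(37,9,2)$ difference sets, and $\mathcal{D}=\{C_0^4,C_3^4\}$ has $\mathrm{Int}(\mathcal{D})=4\,\mathbb{F}_{37}^*$, so it is a DDF. The computation in the proof of Theorem~\ref{thm:C0C3} is valid for every $q\equiv 5\pmod 8$ and gives $\Delta(S)=\frac{q-3+t}{4}C_0^2+\frac{q-3-t}{4}C_1^2$. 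With primitive element $2$, one checks directly that $1$ occurs $10$ times and $2$ occurs $7$ times in $\Delta(S)$, so $\Delta(S)=10C_0^2+7C_1^2$. Then $\mathrm{Ext}(\mathcal{D})=6C_0^2+3C_1^2$ is an EPDF relative to the $(37,18,8,9)$-PDS $T=C_0^2$, and $mk=18=|T|$, so hypothesis (iii) holds. Yet $S$ is not a skew PDS at all: the only possible corresponding PDSs are the level sets $C_0^2$ and $C_1^2$, and both require constants $8,9$. A tiny degenerate example also exists: $T=\{1,3\}\subseteq\mathbb{Z}_4$, a $(4,2,0,2)$-PDS, with $\mathcal{D}=\{\{1\},\{2\}\}$.

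What survives is your counting remark. Since $|S|=|T|$, we get $|T|(\lambda'-\lambda)+(n-1-|T|)(\mu'-\mu)=0$, so $\lambda'=\lambda$ if and only if $\mu'=\mu$. The proposition becomes true once you add the hypothesis $\lambda_1+\lambda_2=\lambda$ (equivalently $\mu_1+\mu_2=\mu$), and then your argument finishes immediately. Without that hypothesis, the most one can conclude is that $\Delta(S)$ is constant on $T$ and on $G^*\setminus T$.
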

\begin{proof}
Let $S=\cup_{i=1}^m D_i$, so $|S|=mk=|T|$.   Then in each case (i)-(iii), there are non-negative integers $\lambda_1,\lambda_2,\mu_1,\mu_2$ such that 
$\mathrm{Int}(\mathcal{D})=\lambda_1 T + \mu_1 (G^* \setminus T) $ and $\mathrm{Ext}(\mathcal{D})=\lambda_2 T + \mu_2 (G^* \setminus T)$. Hence $\Delta(S)=(\lambda_1+\lambda_2)T +(\mu_1+\mu_2)(G^* \setminus T)$.
\end{proof}

So suitable DPDFs and EPDFs could be used to construct skew PDSs.  Conversely, under certain conditions, a given skew PDS may be partitioned into sets to obtain relative DPDFs and EPDFs; this is the direction on which we focus.

In our first construction, we take the skew PDSs of Theorems \ref{thm:C0C3} and \ref{thm:C0C1} (relative to $C_0^2$ and $C_1^2$), and take each class as an individual set of the DPDF/EPDF.  

\begin{theorem}
Let $q$ be a prime power congruent to $5$ modulo $8$. Let $\alpha$ be a primitive element of $\mathbb{F}_q$. Let $q=s^2+t^2$ be the unique proper representation (with sign ambiguity resolved) as in Theorem \ref{thm:KaRaThm}. Then $\mathcal{D}=\{C_0^4,C_3^4\}$ is 
\begin{itemize}
\item[(i)] a $(q,2,(q-1)/4; (q-5)/8,(q+3)/5)$-EPDF relative to $C_0^2$ and a $(q,2,(q-1)/4,(q-5)/8)$-DDF, if $t=-2$;
\item[(ii)] a $(q,2,(q-1)/4;(q+3)/5, (q-5)/8)$-EPDF relative to $C_0^2$ and a $(q,2,(q-1)/4,(q-5)/8)$-DDF, if $t=2$.
\end{itemize}
\end{theorem}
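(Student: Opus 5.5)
The plan is to use the identity $\mathrm{Int}(\mathcal{D})+\mathrm{Ext}(\mathcal{D})=\Delta(S)$, where $S=C_0^4\cup C_3^4$. Theorem \ref{thm:C0C3} already gives $\Delta(S)$, so it is enough to compute $\mathrm{Int}(\mathcal{D})$ and obtain $\mathrm{Ext}(\mathcal{D})$ by subtraction. This avoids computing the two external difference multisets $\Delta(C_0^4,C_3^4)$ and $\Delta(C_3^4,C_0^4)$ separately.

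\emph{Step 1: compute $\mathrm{Int}(\mathcal{D})$.} By Lemma \ref{lemma:cyc}(i),
$$\mathrm{Int}(\mathcal{D})=\Delta(C_0^4)+\Delta(C_3^4)=\sum_{i=0}^3 (i,0)_4C_i^4+\sum_{i=0}^3(i,0)_4C_{i+3}^4 .$$
Hence each element of $C_i^4$ occurs $(i,0)_4+(i+1,0)_4$ times. Since $q\equiv 5 \pmod 8$, $f$ is odd, and we use the odd-$f$ formulae from Theorem \ref{thm:KaRaThm}:
$$(0,0)_4=(2,0)_4=\tfrac{q-7+2s}{16},\qquad (1,0)_4=(3,0)_4=\tfrac{q-3-2s}{16}.$$
For every $i$ the sum $(i,0)_4+(i+1,0)_4$ pairs one value of each kind. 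So every nonzero element occurs $\frac{2q-10}{16}=\frac{q-5}{8}$ times, independently of $t$. Thus $\mathcal{D}$ is a $(q,2,(q-1)/4,(q-5)/8)$-DDF in both cases.

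\emph{Step 2: compute $\mathrm{Ext}(\mathcal{D})$.} By Theorem \ref{thm:C0C3},
$$\Delta(S)=\tfrac14(q-3+t)C_0^2+\tfrac14(q-3-t)C_1^2 .$$
If $t=-2$, then $\Delta(S)=\frac{q-5}{4}C_0^2+\frac{q-1}{4}C_1^2$. Subtracting $\frac{q-5}{8}\mathbb{F}_q^*$ gives
$$\mathrm{Ext}(\mathcal{D})=\tfrac{q-5}{8}C_0^2+\tfrac{q+3}{8}C_1^2 .$$
If $t=2$, the two coefficients are interchanged. In each case $\mathcal{D}$ is an EPDF relative to $C_0^2$ with the stated parameters. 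The second parameter comes out as $(q+3)/8$ (it must be, since the $\lambda$ and $\mu$ of $\mathrm{Ext}(\mathcal{D})$ differ by $1$); the $(q+3)/5$ in the statement appears to be a typographical slip for $(q+3)/8$. As a sanity check, a sub-parameter is $\lambda$ and the other is $\mu$, and these should agree with counting $\frac{q-1}{2}\cdot\frac{q-5}{8}+\frac{q-1}{2}\cdot\frac{q+3}{8}=2\left(\frac{q-1}{4}\right)^2$.

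\emph{Main obstacle.} The only delicate point is using the correct (odd-$f$) table of cyclotomic numbers and checking that the $s$-terms cancel in each pair $(i,0)_4+(i+1,0)_4$. This requires $(1,0)_4$ and $(3,0)_4$ to equal the common ``generic'' value $(q-3-2s)/16$, while $(0,0)_4=(2,0)_4$ in the odd case. I would verify this against the Appendix entries before relying on it. A useful cross-check is that the four sums $N_i$ already computed in the proof of Theorem \ref{thm:C0C3} are consistent with the external contributions obtained by subtraction.
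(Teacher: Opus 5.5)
Your proof is correct and is the paper's argument: you compute $\mathrm{Int}(\mathcal{D})=\Delta(C_0^4)+\Delta(C_3^4)=\frac{q-5}{8}\mathbb{F}_q^*$ from the odd-$f$ cyclotomic numbers, then obtain $\mathrm{Ext}(\mathcal{D})$ by subtracting this from $\Delta(C_0^4\cup C_3^4)$ as given by Theorem \ref{thm:C0C3}. You are also right that the parameter $(q+3)/5$ in the statement should read $(q+3)/8$.
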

\begin{proof}
From the proof of Theorem \ref{thm:C0C3}, $\Delta(C_0^4)=((q-7+2s)/16)C_0^2 + ((q-3-2s)/16)C_1^2$ while $\Delta(C_3^4)=((q-7+2s)/16)C_1^2 + ((q-3-2s)/16)C_0^2$.  Thus $\mathrm{Int}(\mathcal{D})$ is $((q-5)/8) C_0^2 + ((q-5)/8) C_1^2=((q-5)/8) \mathbb{F}_q^*$.  So $\mathcal{D}$ is a DDF, with the parameters specified.  The properties of $\mathrm{Ext}(\mathcal{D})$ follow from the skew PDS property of $S=C_0^4 \cup C_3^4$ established in Theorem \ref{thm:C0C3}, and the fact that $\Delta(S)=\mathrm{Int}(\mathcal{D}) + \mathrm{Ext}(\mathcal{D})$.
\end{proof}

It is not always the case that taking cyclotomic classes whose union forms a skew PDS corresponding to a PDS $T$ will yield a DPDF or EPDF relative to $T$ (see Example \ref{ex:C0C2}).

The next result constructs DPDFs and EPDFs relative to $C_0^2$ and $C_1^2$ by partitioning the Paley skew PDSs of Theorems \ref{thm:C0C1} and \ref{thm:C0C3} in a different way, using a technique originally used to obtain EDFs in \cite{ChaDin}. 

We first recall the following well-known lemma (see for example \cite{Sto}).
\begin{lemma}\label{lem:2is}
Let $q$ be an odd prime power.  Then 
\begin{itemize}
\item[(i)] if $q \equiv 1,7 \mod 8$ then $2 \in C_0^2$;
\item[(ii)] if $q \equiv 3,5 \mod 8$ then $2 \in C_1^2$.
\end{itemize}
\end{lemma}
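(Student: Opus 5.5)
The statement is the classical quadratic character of $2$. I will prove it using the element $\beta = \zeta + \zeta^{-1}$, where $\zeta$ is a primitive $8$th root of unity in an extension field of $\mathbb{F}_q$.

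First, since $q$ is odd, $8$ is invertible, so such a $\zeta$ exists in some finite extension of $\mathbb{F}_q$. It satisfies $\zeta^4=-1$.

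Second, compute
$$\beta^2=\zeta^2+2+\zeta^{-2}=2+\zeta^{-2}(\zeta^4+1)=2.$$
So $\beta$ is a square root of $2$, and $2\in C_0^2$ if and only if $\beta\in\mathbb{F}_q$. Note $2\neq 0$ because $q$ is odd.

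Third, $\beta\in\mathbb{F}_q$ holds exactly when the Frobenius map fixes it, i.e. when $\beta^q=\beta$. We have
$$\beta^q=\zeta^q+\zeta^{-q},$$
and this depends only on $q \bmod 8$.

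Fourth, we split into cases.
\begin{itemize}
\item If $q\equiv \pm1 \mod 8$, then $\beta^q=\zeta+\zeta^{-1}=\beta$. Hence $\beta\in\mathbb{F}_q$ and $2$ is a nonzero square, giving $2\in C_0^2$. This proves (i).
\item If $q\equiv \pm3 \mod 8$, then $\beta^q=\zeta^3+\zeta^{-3}$. Since $\zeta^4=-1$, we have $\zeta^3=-\zeta^{-1}$ and $\zeta^{-3}=-\zeta$, so $\beta^q=-\beta$.
\end{itemize}

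Fifth, in the second case we need $-\beta\neq\beta$. This holds because $\beta^2=2\neq0$ and the characteristic is odd. So $\beta\notin\mathbb{F}_q$. The only square roots of $2$ in the extension field are $\pm\beta$, so $2$ has no square root in $\mathbb{F}_q$. Since $2\in\mathbb{F}_q^*$, it lies in $C_1^2$, proving (ii).

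The only delicate point is this last step: recognising that $\pm\beta$ are all the square roots of $2$, so that $\beta\notin\mathbb{F}_q$ really excludes $2$ from $C_0^2$. Everything else is direct computation.
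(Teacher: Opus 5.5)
Your proof is correct and complete. $\beta=\zeta+\zeta^{-1}$ satisfies $\beta^2=2$, and the square roots of $2$ in any extension are exactly $\pm\beta$. The Frobenius computation $\beta^q=\zeta^q+\zeta^{-q}=\pm\beta$, with the plus sign precisely when $q\equiv\pm1 \bmod 8$, then settles both parts. Finally, $\beta\neq-\beta$ because $\beta^2=2\neq 0$ in odd characteristic.

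The paper gives no proof of this lemma. It simply recalls it as well known and cites Storer, so there is no argument to compare step by step.

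What your route buys is that it is self-contained and works uniformly for every odd prime power $q$, using only $q \bmod 8$. The more familiar textbook route (Gauss's lemma, i.e. the second supplement to quadratic reciprocity) is stated for primes $p$. It would need an extra step to pass to $\mathbb{F}_{p^m}$, for instance Euler's criterion in the form $2^{(q-1)/2}=\bigl(2^{(p-1)/2}\bigr)^{1+p+\cdots+p^{m-1}}$, where the exponent is odd exactly when $m$ is odd.

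Two small points you could make explicit:
\begin{itemize}
\item $\zeta$ can be taken in $\mathbb{F}_{q^2}$, since $\mathbb{F}_{q^2}^*$ is cyclic of order $q^2-1$, which is divisible by $8$ for odd $q$.
\item $\zeta^4=-1$ because $\zeta^4$ has order exactly $2$, and $-1$ is the unique element of order $2$ in a field of odd characteristic.
\end{itemize}
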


\begin{theorem}\label{thm:DPDFnotclasses}
Let $q$ be a prime power congruent to $5$ modulo $8$. Let $\alpha$ be a primitive element of $\mathbb{F}_q$. Let $q=s^2+t^2$ be the unique proper representation (with sign ambiguity resolved) as in Theorem \ref{thm:KaRaThm}. 
Let $D_i=\{i,2i\}$ ($i \in \mathbb{F}_q^*$) and let $\mathcal{D}=\{D_i: i \in C_0^4\}$.  Then 
\begin{itemize}
\item[(i)] if $2 \in C_1^4$, $\mathcal{D}$ is
\begin{itemize}
    \item a $(q,(q-1)/4,2,(q-5)/4)$-EDF and a $(q,(q-1)/4,2;1,0)$-DPDF relative to $C_0^2$ if $t=-2$;
    \item a $(q,(q-1)/4,2;(q-9)/4,(q-1)/4)$-EPDF and a $(q,(q-1)/4,2;1,0)$-DPDF relative to $C_0^2$ if $t=2$.
\end{itemize}
\item[(ii)] if $2 \in C_3^4$, $\mathcal{D}$ is
\begin{itemize}
    \item a $(q,(q-1)/4,2,(q-5)/4)$-EDF and a $(q,(q-1)/4,2;1,0)$-DPDF relative to $C_0^2$ if $t=2$;
    \item a $(q,(q-1)/4,2;(q-9)/4,(q-1)/4)$-EPDF and a $(q,(q-1)/4,2;1,0)$-DPDF relative to $C_0^2$ if $t=-2$.
\end{itemize}
\end{itemize}
\end{theorem}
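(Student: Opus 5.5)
The plan is to split $\Delta(S)$, with $S=\bigcup_{i\in C_0^4}D_i$, into the internal part $\mathrm{Int}(\mathcal{D})$ and the external part $\mathrm{Ext}(\mathcal{D})$. We compute $\mathrm{Int}(\mathcal{D})$ directly. Then we identify $S$ with one of the skew PDSs of Theorems \ref{thm:C0C3} and \ref{thm:C0C1}, and read off $\mathrm{Ext}(\mathcal{D})=\Delta(S)-\mathrm{Int}(\mathcal{D})$.

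First we check that $\mathcal{D}$ is a family of disjoint $2$-subsets of $G^*$. Since $2\notin C_0^4$, the sets $C_0^4$ and $2C_0^4$ are disjoint cosets. Hence the $D_i$ with $i\in C_0^4$ are pairwise disjoint, and $S=C_0^4\cup 2C_0^4$. If $2\in C_1^4$, then $S=C_0^4\cup C_1^4$. If $2\in C_3^4$, then $S=C_0^4\cup C_3^4$.

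Next we compute the internal differences. Since $\Delta(D_i)=\{i,-i\}$, we get
\[
\mathrm{Int}(\mathcal{D})=C_0^4\cup(-C_0^4)=C_0^4\cup C_2^4=C_0^2 .
\]
Here we use $-1\in C_2^4$, which holds as $q\equiv 5 \bmod 8$ (as in the proof following Theorem \ref{thm:C0C1}). So each element of $C_0^2$ occurs once and each element of $C_1^2$ occurs zero times. Thus $\mathcal{D}$ is a $(q,(q-1)/4,2;1,0)$-DPDF relative to $C_0^2$ in every case.

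Finally we obtain the external differences. By Theorem \ref{thm:C0C3} (for $S=C_0^4\cup C_3^4$) and Theorem \ref{thm:C0C1} (for $S=C_0^4\cup C_1^4$), $\Delta(S)$ is $\frac{q-5}{4}$ copies of one Paley class plus $\frac{q-1}{4}$ copies of the other. Which class receives $\frac{q-5}{4}$ depends on the sign of $t$ and on the location of $2$. We subtract $\mathrm{Int}(\mathcal{D})=C_0^2$, and there are two outcomes.

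If the class receiving $\frac{q-5}{4}$ is $C_1^2$, then $\mathrm{Ext}(\mathcal{D})=\frac{q-1}{4}C_0^2-C_0^2+\frac{q-5}{4}C_1^2=\frac{q-5}{4}\mathbb{F}_q^*$, which is an EDF. This happens for $2\in C_1^4$ with $t=-2$ (Theorem \ref{thm:C0C1}(i)), and for $2\in C_3^4$ with $t=2$ (Theorem \ref{thm:C0C3}(ii)).

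In the opposite cases the class receiving $\frac{q-5}{4}$ is $C_0^2$. Then $\mathrm{Ext}(\mathcal{D})=\frac{q-9}{4}C_0^2+\frac{q-1}{4}C_1^2$, which gives the stated EPDF relative to $C_0^2$.

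The only delicate point is the bookkeeping of the signs. We must match each case ($2\in C_1^4$ or $2\in C_3^4$, and $t=\pm2$) to the correct item of Theorem \ref{thm:C0C3} or Theorem \ref{thm:C0C1}. We must also confirm that the convention for $t$ in Theorem \ref{thm:KaRaThm} is the one used there, since both depend on the same choice of primitive element $\alpha$. Everything else is immediate.
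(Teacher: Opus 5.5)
Your proposal is correct and follows the paper's proof essentially step for step. You compute $\mathrm{Int}(\mathcal{D})=C_0^4\cup C_2^4=C_0^2$ using $-1\in C_2^4$, identify $S=C_0^4\cup 2C_0^4$ with the skew PDS of Theorem \ref{thm:C0C1} or Theorem \ref{thm:C0C3} according to whether $2\in C_1^4$ or $2\in C_3^4$, and subtract to obtain $\mathrm{Ext}(\mathcal{D})$; your matching of the signs of $t$ to the four cases is right, and your explicit check that the $D_i$ are pairwise disjoint $2$-subsets of $\mathbb{F}_q^*$ is a detail the paper leaves implicit.
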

\begin{proof}
Since $q \equiv 5 \mod 8$, $-1=\alpha^{(q-1)/2} \in C_2^4$, so $-C_i^4=C_{i+2}^4$.  Observe that the multiset $\mathrm{Int}(\mathcal{D})$ is 
$$\cup_{i \in C_0^4} \Delta(D_i)=\cup_{i \in C_0^4} \{i,-i\}=C_0^4 \cup -C_0^4=C_0^4 \cup C_2^4=C_0^2.$$

Since $q \equiv 5 \mod 8$, by Lemma \ref{lem:2is} we have $2 \in C_1^2$.  We consider two cases: $2 \in C_1^4$ and $2 \in C_3^4$.  In the first case, $\cup D_i= C_0^4 \cup 2C_0^4=C_0^4 \cup C_1^4$ while in the second case $\cup D_i= C_0^4 \cup 2C_0^4=C_0^4 \cup C_3^4$.  These are both Paley skew PDSs by Theorems \ref{thm:C0C1} and \ref{thm:C0C3}.

When $2 \in C_1^4$, $\cup_{i \in C_0^4} D_i=C_0^4 \cup C_1^4$. 
By Theorem \ref{thm:C0C1}, this is a skew PDS whose internal difference multiset comprises $(q-3+t)/4$ copies of $C_1^2$ and $(q-3-t)/4$ copies of $C_0^2$.  So $\mathrm{Ext}(\mathcal{D})$ comprises $(q-3+t)/4$ copies of $C_1^2$ and $(q-7-t)/4$ copies of $C_0^2$.  When $2 \in C_3^4$, $\cup_{i \in C_0^4} D_i=C_0^4 \cup C_3^4$. By Theorem \ref{thm:C0C3}, this is a skew PDS whose internal difference multiset comprises $(q-3+t)/4$ copies of $C_0^2$ and $(q-3-t)/4$ copies of $C_1^2$.  So $\mathrm{Ext}(\mathcal{D})$ comprises $(q-7+t)/4$ copies of $C_0^2$ and $(q-3-t)/4$ copies of $C_1^2$.  This completes the proof.
\end{proof}

	\begin{example}
		Let $q=13$. We first take primitive element $2$ of $\mathbb{F}_{13}$. By Example~\ref{ex:e4example}, we have
		$t=-2$, $C_0^4=\{1,3,9\}$, $C_1^4=\{2,5,6\}$, $C_2^4=\{4,10,12\}$, $C_3^4=\{7,8,11\}$,
		and hence $2\in C_1^4$. Let $D_i=\{i,2i\}$ for $i\in C_0^4$. Then 
		$\mathcal{D}= \{D_i : i \in C_0^4\}= \{D_1, D_3, D_9\}=\{\{1,2\},\{3,6\},\{9,5\}\}$ and 
		$\bigcup_{i\in C_0^4}D_i=\{1,2,3,5,6,9\}=C_0^4\cup C_1^4$.
		
		We determine the internal differences; we have 
		$\Delta(D_1)=\{1,12\}$, $\Delta(D_3)=\{3,10\}$, $\Delta(D_9)=\{4,9\}$. Thus
		$
		\cup_{i\in C_0^4}\Delta(D_i)=\{1,3,4,9,10,12\}=C_0^2,
		$
		so $\mathcal{D}$ is a $(13,3,2;1,0)$-DPDF relative to $C_0^2$.
		We now  determine the external differences. We have 
		$\Delta(D_1, D_3)=\{8,9,11,12\}$, 
		$\Delta(D_3, D_1)=\{1,2,5,4\}$, 
		$\Delta(D_1, D_9)=\{5,6,9,10\}$, 
		$\Delta(D_9, D_1)=\{3,4,7,8\}$, 
		$\Delta(D_3, D_9)=\{1,7,10,11\}$, 
		$\Delta(D_9, D_3)=\{2,3,6,12\}$. 
		Hence each element of $\mathbb{F}_{13}^*$ occurs exactly twice among the external differences, and therefore $\mathcal{D}$ is a $(13,3,2,2)$-EDF.
	
If  we take primitive element $7$ of $\mathbb{F}_{13}$, then again  by Example~\ref{ex:e4example}, we have
$t=2$, $C_0^4=\{1,3,9\}$, $C_1^4=\{7,8,11\}$, $C_2^4=\{4,10,12\}$, $C_3^4=\{2,5,6\}$, and hence $2\in C_3^4$.  By the above calculations, $\mathcal{D}$ forms a $(13,3,2,2)$-EDF and a $(13,3,2;1,0)$-DPDF.
\end{example}

\subsubsection{Relative DPDFs from cyclotomic classes and their unions}

In this section, we show that cyclotomic classes may be used to obtain DPDFs and EPDFs relative to a PDS $T$, in cases where the classes and their unions are not related to $T$.

We first present constructions using individual cyclotomic classes. 

\begin{theorem}\label{thm:DPDFC0C2}
Let $q=8f+1$ be a prime power with $f$ odd. Suppose that
$q=x^2+4y^2=a^2+2b^2$ are the unique proper representations of $q$. 
Then $\mathcal D=\{C_0^{8},\,C_2^{8}\}$ is:
\begin{itemize}
\item a $(q, 2, {(q-1)}/{8}; {(q-11-2x-4a)}/{32}, {(q-7+2x+4a)}/{32})$-DPDF relative to $C_0^2$, if $x + 2a \ne -1$;
\item a $(q, 2, (q-1)/8, (q-9)/32)$-DDF, if $x+2a=-1$. 
\end{itemize}
\end{theorem}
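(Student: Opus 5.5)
The plan is to compute $\mathrm{Int}(\mathcal D)=\Delta(C_0^8)+\Delta(C_2^8)$ explicitly by Lemma \ref{lemma:cyc}. Part (i) of that lemma gives $\Delta(C_0^8)=\sum_{i=0}^7 (i,0)_8 C_i^8$ and $\Delta(C_2^8)=\sum_{i=0}^7 (i,0)_8 C_{i+2}^8$. Hence each element of $C_j^8$ occurs in $\mathrm{Int}(\mathcal D)$ exactly
$$M_j=(j,0)_8+(j-2,0)_8$$
times, with indices taken modulo $8$.

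The next step is to simplify these counts. Since $q=8f+1$ with $f$ odd, we have $q\equiv 9 \mod 16$, so $-1=\alpha^{4f}\in C_4^8$. This is the ``$f$ odd'' case of the order-$8$ cyclotomic numbers in Theorem \ref{thm:e=8cyclo}, where $(i,0)_8=(i+4,0)_8$. It follows that $M_j=M_{j+4}$, so it suffices to compute $M_0,M_1,M_2,M_3$. For this I will use the symmetries $(2,0)_8=(6,0)_8$ and $(1,0)_8=(5,0)_8$, $(3,0)_8=(7,0)_8$, together with the explicit formulas for $(0,0)_8,(1,0)_8,(2,0)_8,(3,0)_8$ in terms of $q,x,y,a,b$.

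From the shape of the stated parameters, I expect $M_0=M_2=(0,0)_8+(2,0)_8$ and $M_1=M_3=(1,0)_8+(3,0)_8$. These should come out as $(q-11-2x-4a)/32$ and $(q-7+2x+4a)/32$ respectively. In particular, the $y$- and $b$-dependent terms, and any dependence on whether $2$ is a quartic residue, should cancel in these pair sums. Indeed, the individual $(1,0)_8$ and $(3,0)_8$ carry $\pm$ terms in $y$ and $b$ that are opposite, so they drop out of $M_1$.

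Since $C_0^8\cup C_2^8\cup C_4^8\cup C_6^8=C_0^2$, this gives
$$\mathrm{Int}(\mathcal D)=\tfrac{q-11-2x-4a}{32}\,C_0^2+\tfrac{q-7+2x+4a}{32}\,C_1^2,$$
which is the DPDF relative to $C_0^2$ claimed in the first bullet. The two coefficients coincide exactly when $-11-2x-4a=-7+2x+4a$, that is when $x+2a=-1$. In that case both equal $(q-9)/32$, and $\mathcal D$ is a DDF with the parameters of the second bullet. As a sanity check, the total count is $2\cdot\frac{q-1}{8}\cdot(\frac{q-1}{8}-1)$, which equals $\frac{q-1}{2}(M_0+M_1)$ because $M_0+M_1=(2q-18)/32$.

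The main obstacle is the bookkeeping in Theorem \ref{thm:e=8cyclo}. Its order-$8$ tables split according to $f$ even or odd and according to whether $2$ is a quartic residue. I need to confirm that in both subcases with $f$ odd, the sums $(0,0)_8+(2,0)_8$ and $(1,0)_8+(3,0)_8$ give the same expressions. If they do not, the statement must implicitly restrict to one subcase, and I would check the sums separately in each table.
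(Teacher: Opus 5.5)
Your proposal is correct and follows the paper's proof essentially verbatim. Both arguments reduce the frequency on $C_j^8$ to $(j,0)_8+(j-2,0)_8$ via Lemma~\ref{lemma:cyc}, use $(i,0)_8=(i+4,0)_8$ to get $X=(0,0)_8+(2,0)_8$ on even classes and $Y=(1,0)_8+(3,0)_8$ on odd classes, and read these off the $f$ odd table of Theorem~\ref{thm:e=8cyclo}. The subcase check you flag does go through. In both subcases $64X=(q-15-2x)+(q-7-2x-8a)=(q-15-10x-8a)+(q-7+6x)=2q-22-4x-8a$. In $64Y$ the only extra terms are $\pm16y$ in the non-quartic-residue case; no $b$ appears, and these terms cancel.
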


\begin{proof}
By Lemma \ref{lemma:cyc}, since $\Delta(C_j^8)=\sum_{i=0}^7 (i,0)_8 C_{i+j}^8$, each element of $C_i^{8}$ occurs precisely $(i,0)_8+(i-2,0)_8$ times in $\mathrm{Int}(\mathcal D)$. 
By Theorem \ref{thm:e=8cyclo}, $(0,0)_8=(4,0)_8, (1,0)_8=(5,0)_8, (2,0)_8=(6,0)_8$ and $(3,0)_8=(7,0)_8$.  So each element of $C_j^8$ occurs in $\mathrm{Int}(\mathcal{D})$ precisely $X=(0,0)_8+(2,0)_8$ times when $j \in \{0,2,4,6\}$, and precisely $Y=(1,0)_8+(3,0)_8$ times when $j \in \{1,3,5,7\}$.  For the cyclotomic numbers, by Theorem \ref{thm:e=8cyclo} there are two separate cases to consider, depending on whether or not $2$ is a quartic residue.  However, in both cases we obtain the same values for $X$ and $Y$, namely $X=(q-11-2x-4a)/32$ and $Y=(q-7+2x+4a)/32$. Thus $\mathcal D$ forms a relative DPDF with respect to $C_0^2$ whenever the two frequencies are distinct.
		If $x+2a=-1$, then the two frequencies coincide and every nonzero element of $\mathbb F_q$ occurs exactly
	${(q-9)}/{32}
	$ 
	times in the multiset of internal differences of $\mathcal D$, so $\mathcal D$ is a DDF. This completes the proof. 
\end{proof}

\begin{example}\label{ex:C0C2}
\begin{itemize}
\item[(i)] The condition of Corollary \ref {cor:C_1^4skewPDS} provides examples for Theorem \ref{thm:DPDFC0C2}.  If $\ell \equiv 3 \mod 8$ is a prime power of the form $(c^2/2)+1$ for some $c \in \mathbb{Z}$ and $q=\ell^2$ (so in particular $q=8f+1$, $f$ odd), then it has been shown that $x+a=-2$ (where $a=\ell-2$).  So in this setting, $x+2a \neq -1$ whenever $a \neq 1$, i.e.,  whenever $\ell \neq 3$, so $q \neq 9$. So all examples with $q>9$ given in Example \ref{ex:skewPDSC0C2} provide examples of DPDFs for Theorem \ref{thm:DPDFC0C2}.  We note that the union of the sets $C_0^8 \cup C_2^8$ is itself a skew PDS, but the PDS which it corresponds to is $C_1^4$ rather than $C_0^2$ (or $C_1^2$).
\item[(ii)] Examples for Theorem \ref{thm:DPDFC0C2} not covered by case (i) may be obtained by taking $q=p^{2m+1}$ with $p \equiv 9 \pmod{16}$. In this case, $q \equiv 9 \pmod{16}$ (hence $q=8f+1$ with $f$ odd), but since $p \equiv 1 \pmod{8}$, these values of $q$ fall outside the hypotheses of Corollary~\ref{cor:C_1^4skewPDS}. Both DPDFs and DDFs may be so obtained. For $q=89 = 5^2 + 4\cdot 4^2 = 9^2 + 2\cdot 2^2$, we have $x=5$ and $a=9$, and hence $x+2a=5+2\cdot 9=23 \neq -1$. Thus $\mathcal D=\{C_0^8,C_2^8\}$ gives a $(89,2,11;1,4)$-DPDF relative to $C_0^2$.  For $q=41=5^2+4 \cdot 2^2=(-3)^2+2 \cdot 4^2$, we have $x=5$ and $a=-3$ so $x+2a=-1$ and we obtain a $(41,2,5,1)$-DDF.
\end{itemize}
\end{example}

\begin{theorem}\label{thm:DPDF-single0146}
Let $q=8f+1$ be a prime power with $f$ odd. Suppose that
$q=x^2+4y^2=a^2+2b^2$ are the unique proper representations of $q$. Suppose that $2$ is a
	quartic residue. Let
	$
	\mathcal D=\{C_0^8,C_1^8,C_4^8,C_6^8\}.
	$
	If $a=1$, then $\mathcal D$ is a
	$
	(q,4,(q-1)/8;(q-12-x)/16,(q-6+x)/16)
	$-DPDF
	relative to $C_0^2$.
\end{theorem}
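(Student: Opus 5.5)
Following the proof of Theorem \ref{thm:DPDFC0C2}, I apply Lemma \ref{lemma:cyc}(i) to each class: $\Delta(C_j^8)=\sum_{i=0}^7 (i,0)_8 C_{i+j}^8$. Hence every element of $C_k^8$ occurs in $\mathrm{Int}(\mathcal D)$ exactly
$$M_k=(k,0)_8+(k-1,0)_8+(k-4,0)_8+(k-6,0)_8$$
times, with indices taken modulo $8$. Here $q=8f+1$ with $f$ odd, so $q\equiv 9 \bmod 16$ and $-1\in C_4^8$. By Theorem \ref{thm:e=8cyclo} we then have $(i,0)_8=(i+4,0)_8$. Set $A_i=(i,0)_8$ for $i=0,1,2,3$. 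Reducing the indices modulo $4$ gives
$$M_k=2A_k+A_{k-1}+A_{k+2},$$
which depends only on $k \bmod 4$.

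The four values are:
\begin{itemize}
\item[] $M_0=2A_0+A_3+A_2$;
\item[] $M_1=2A_1+A_0+A_3$;
\item[] $M_2=2A_2+A_1+A_0$;
\item[] $M_3=2A_3+A_2+A_1$.
\end{itemize}
The claim is that $M_0=M_2$ and $M_1=M_3$, and that these common values are $(q-12-x)/16$ and $(q-6+x)/16$ respectively. This would show that each element of $C_0^2=C_0^8\cup C_2^8\cup C_4^8\cup C_6^8$ occurs $(q-12-x)/16$ times and each element of $C_1^2$ occurs $(q-6+x)/16$ times. The condition $M_0=M_2$ is equivalent to $A_3=A_1$, and this same equality gives $M_1=M_3$. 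So the structural part of the proof reduces to showing $(1,0)_8=(3,0)_8$ under the hypotheses.

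Next I read off $A_0,\dots,A_3$ from the table in Theorem \ref{thm:e=8cyclo} for the case $f$ odd with $2$ a quartic residue. Each entry has the form $(q-c+\alpha x+\beta a+\gamma y+\delta b)/64$. I expect the $b$-terms in $A_1$ and $A_3$ to have opposite signs, with everything else equal. Then $A_1=A_3$ would require $b=0$, which is impossible, so this is where the hypothesis $a=1$ must enter. The natural mechanism is that the relevant entries carry a term such as $\pm 2b(a-1)$ or something of similar shape, or that the $a$-dependence combines with $y$ through $q=a^2+2b^2$ so that the difference vanishes when $a=1$. A further point is that $y$ need not be $0$ here (unlike in Theorem \ref{thm:C_1^4skewPDS}), so the $y$-terms also have to cancel in $M_0$ and $M_1$. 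Since $M_k=2A_k+A_{k-1}+A_{k+2}$, the odd-indexed $y$ and $b$ contributions should pair off symmetrically.

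I expect the main obstacle to be precisely this bookkeeping with the table: checking that substituting $a=1$ removes the $b$- and $y$-asymmetry between $(1,0)_8$ and $(3,0)_8$, and then simplifying $M_0$ and $M_1$ to the stated values. As a sanity check I would verify $M_0+M_1=(2q-18)/16$. This should follow from $\sum_i (i,0)_8=f-1$ together with the periodicity, since $\sum_k M_k$ over $k=0,\dots,7$ equals $4(f-1)$, so $4(M_0+M_1)=4(f-1)$ and $M_0+M_1=(q-9)/8$, which matches. Finally, I note that $(q-12-x)/16\neq(q-6+x)/16$ because $x\neq -3$. Strictly this is not needed for the DPDF equation itself, but it confirms that the family is not merely a DDF.
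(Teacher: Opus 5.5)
Your set-up coincides with the paper's: your $M_k$ and the four expressions $M_0,\dots,M_3$ are exactly the paper's $N_0,\dots,N_3$. The gap is in the reduction and in the table step. That step is the whole content of the proof, and you leave it as an expectation.

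First, the claim ``$M_0=M_2$ is equivalent to $A_3=A_1$'' is an algebra slip. In fact $M_0-M_2=(A_0-A_2)-(A_1-A_3)$ and $M_1-M_3=(A_0-A_2)+(A_1-A_3)$, so you need both $(0,0)_8=(2,0)_8$ and $(1,0)_8=(3,0)_8$.

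Second, your predicted mechanism is wrong. Take the column of Theorem~\ref{thm:e=8cyclo} for $q\equiv 9\pmod{16}$ with $2$ a quartic residue. There $(1,0)_8=I$ and $(3,0)_8=J$, with $64I=64J=q-7+2x+4a$. So $A_1=A_3$ holds with no extra hypothesis, and no $y$- or $b$-terms occur in any $(i,0)_8$; nothing needs to cancel. The hypothesis $a=1$ is what gives the other equality, since $64(0,0)_8=q-15-2x$ and $64(2,0)_8=q-7-2x-8a$. Substituting yields $M_0=(q-11-x-a)/16$, $M_2=(q-9-x-3a)/16$, $M_1=(q-9+x+3a)/16$ and $M_3=(q-7+x+a)/16$. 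Setting $a=1$ then gives $(q-12-x)/16$ and $(q-6+x)/16$.

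Notice that your reduction, combined with the automatic equality $A_1=A_3$, would ``prove'' the theorem without $a=1$, and that is false. Take $q=89=5^2+4\cdot 4^2=9^2+2\cdot 2^2$; since $2^{11}\equiv 1 \pmod{89}$, $2$ is a quartic residue. Here $(0,0)_8=1$, $(2,0)_8=0$ and $(1,0)_8=(3,0)_8=2$, hence $M_0=4\neq 3=M_2$.

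Two smaller points. Your sum check $M_0+M_1=(q-9)/8$ is correct, but it only constrains the sum, so it cannot replace the table computation. Your closing claim $x\neq -3$ is false. For $q=73=(-3)^2+4\cdot 4^2=1^2+2\cdot 6^2$ every hypothesis holds, since $2\equiv 5^8 \pmod{73}$ is even an eighth power. There all $(i,0)_8=1$, both frequencies equal $4$, and $\mathcal D$ is a DDF. This does not affect the multiset equation, but the remark should be dropped.
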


\begin{proof}
Here $q \equiv 9 \mod 16$ and $\mathrm{Int}(\mathcal D)
=\Delta(C_0^8)+\Delta(C_1^8)+\Delta(C_4^8)+\Delta(C_6^8)$.  Each element of $C_i^8$ occurs in $\mathrm{Int}(\mathcal D)$ precisely $N_i=(i,0)_8+(i-1,0)_8+(i-4,0)_8+(i-6,0)_8$ times. By Theorem~\ref{thm:e=8cyclo}, $N_0=N_4=2(0,0)_8+(7,0)_8+(2,0)_8$ and $N_2=N_6=(0,0)_8+(1,0)_8+2(2,0)_8$.  Since $2$ is a quartic residue,
$N_0=(q-11-x-a)/16$ and $N_2=(q-9-x-3a)/16$; using $a=1$, we get $N_0=N_2=(q-12-x)/16$.  Similarly,
$N_1=N_5=(0,0)_8+2(1,0)_8+(7,0)_8$ and $N_3=N_7=(2,0)_8+(1,0)_8+2(7,0)_8$. By Theorem~\ref{thm:e=8cyclo}, we obtain $N_1=(q-9+x+3a)/16$
and $N_3=(q-7+x+a)/16$; using $a=1$, we get $N_1=N_3=(q-6+x)/16$, as required.
\end{proof}

The sets of the relative DPDF may also be unions of cyclotomic classes.

\begin{theorem}\label{thm:DPDF2union1}
Let $q=8f+1$ be a prime power with $f$ odd. Suppose that
$q=x^2+4y^2=a^2+2b^2$
are the unique proper representations of $q$. Then $\mathcal D=\{C_0^8\cup C_1^8,\ C_2^8\cup C_3^8\}$ is:
\begin{itemize}
\item  a $(q,2, (q-1)/4;(q-5+2y-2b)/{8}, (q-5-2y+2b)/8)$-DPDF relative to $C_0^2$, if $y \ne b$;
\item a $(q,2,(q-1)/4,(q-5)/8)$-DDF,  if $y=b$.
\end{itemize}
\end{theorem}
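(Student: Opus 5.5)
We follow the template of the proofs of Theorems \ref{thm:DPDFC0C2} and \ref{thm:DPDF-single0146}: compute $\mathrm{Int}(\mathcal D)=\Delta(C_0^8\cup C_1^8)+\Delta(C_2^8\cup C_3^8)$ class by class using Lemma \ref{lemma:cyc}, and then evaluate the resulting sums of cyclotomic numbers of order $8$ with Theorem \ref{thm:e=8cyclo}. First we expand each union:
$$\Delta(C_j^8\cup C_{j+1}^8)=\Delta(C_j^8)+\Delta(C_{j+1}^8)+\Delta(C_{j+1}^8,C_j^8)+\Delta(C_j^8,C_{j+1}^8).$$
By Lemma \ref{lemma:cyc}, $\Delta(C_{j+1}^8,C_j^8)=\sum_i (i,1)_8C_{i+j}^8$ and $\Delta(C_j^8,C_{j+1}^8)=\sum_i (i,7)_8 C_{i+j+1}^8$. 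Hence an element of $C_i^8$ occurs in $\Delta(C_j^8\cup C_{j+1}^8)$ exactly
$$M_{i-j}=(i-j,0)_8+(i-j-1,0)_8+(i-j,1)_8+(i-j-1,7)_8$$
times. The total multiplicity of $C_i^8$ in $\mathrm{Int}(\mathcal D)$ is therefore $N_i=M_i+M_{i-2}$.

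Next we reduce the number of cases. Since $f$ is odd, $q\equiv 9 \bmod 16$, so $-1\in C_4^8$. The symmetries $(i,j)_8=(i+4,j+4)_8$ and $(i,j)_8=(j,i)_8$ with the appropriate shift (as recorded in Theorem \ref{thm:e=8cyclo} for $f$ odd) should then give $M_k=M_{k+4}$, and hence $N_i=N_{i+4}$. It therefore suffices to compute $N_0=M_0+M_6$, $N_1=M_1+M_7$, $N_2=M_2+M_0$ and $N_3=M_3+M_1$, using $M_6=M_2$ and $M_7=M_3$. The goal is to show that $N_0=N_2=(q-5+2y-2b)/8$ and $N_1=N_3=(q-5-2y+2b)/8$.

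As a sanity check, counting elements gives
$$\sum_i N_i\cdot\frac{q-1}{8}=2\cdot\frac{q-1}{4}\Bigl(\frac{q-1}{4}-1\Bigr),$$
so $N_0+N_1+N_2+N_3=(q-5)/2$. This matches $2\cdot\frac{q-5}{8}\cdot 2$ and fixes the constant term.

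The main obstacle is the evaluation itself. Theorem \ref{thm:e=8cyclo} gives the order-$8$ cyclotomic numbers for $f$ odd in two separate tables, according to whether $2$ is a quartic residue. In each table we must substitute the sixteen or so entries $(k,0)_8,(k,1)_8,(k,7)_8$ and check two things. First, that the $x$- and $a$-terms cancel in every $N_i$, which is the analogue of the coincidence observed in the proof of Theorem \ref{thm:DPDFC0C2}. Second, that only $\pm 2y\mp 2b$ survives, with the same sign pattern in both tables. We expect the $b$ and $y$ contributions to pair up as $(k,1)_8$ versus $(k,7)_8$ terms; we would first verify this directly for $N_0-N_1$, since that difference equals $(y-b)/2$.

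Once the frequencies are established, the conclusion follows as before. The elements of $C_0^8\cup C_2^8\cup C_4^8\cup C_6^8=C_0^2$ occur $(q-5+2y-2b)/8$ times and those of $C_1^2$ occur $(q-5-2y+2b)/8$ times, so $\mathcal D$ is a DPDF relative to $C_0^2$ when $y\neq b$. When $y=b$ both frequencies equal $(q-5)/8$, and $\mathcal D$ is a DDF.
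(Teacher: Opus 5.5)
Your reduction is the paper's own. The expansion via Lemma~\ref{lemma:cyc} is the same, your $N_i=M_i+M_{i-2}$ is exactly the paper's eight-term multiplicity, and evaluating in both columns of Theorem~\ref{thm:e=8cyclo} is what the paper does.

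\textbf{The gap.} The proposal stops where the content of the theorem lies. You never evaluate a single cyclotomic number; you only list what ``must be checked'' and what you ``expect''. So the frequencies $(q-5\pm(2y-2b))/8$ are not yet proved.

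\textbf{A wrong identity.} The symmetry you cite for $M_k=M_{k+4}$ is false: $(i,j)_8=(i+4,j+4)_8$ does not hold. For $f$ odd, $(0,1)_8=B$ while $(4,5)_8=I$. The identities you actually need are $(k+4,0)_8=(k,0)_8$ and $(k+4,1)_8=(k-1,7)_8$. Both follow from $(i,j)_8=(-i,j-i)_8$ and $(i,j)_8=(j+4,i+4)_8$, or can be read off the relation list in Theorem~\ref{thm:e=8cyclo}. They match the two $(\cdot,0)_8$ terms of $M_k$ and $M_{k+4}$ and swap the two mixed terms, giving $M_k=M_{k+4}$.

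\textbf{How to finish.} With that repaired, your own observations complete the proof with less table work than the paper, which evaluates the full sums $X$ and $Y$ in both columns.
\begin{enumerate}
\item Since $M_6=M_2$ and $M_7=M_3$, you get $N_0=M_0+M_2=N_2$ and $N_1=M_1+M_3=N_3$ automatically.
\item Your counting identity then gives $N_0+N_1=(q-5)/4$, so only $N_0-N_1$ needs to be computed.
\item Using $(7,0)_8=(3,0)_8$ and $(2,1)_8=(3,1)_8$, you obtain $N_0-N_1=[(0,1)_8-(0,7)_8]+[(7,7)_8-(1,1)_8]+[(1,7)_8-(2,7)_8]$.
\item If $2$ is a quartic residue, these three brackets are $y/2$, $0$ and $-b/2$. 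If $2$ is not a quartic residue, they are $-b/2$, $y/2$ and $0$.
\item In both cases $N_0-N_1=(y-b)/2$. Hence $N_0=N_2=(q-5+2y-2b)/8$ and $N_1=N_3=(q-5-2y+2b)/8$, which gives both the DPDF and the DDF conclusions.
\end{enumerate}
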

\begin{proof}
	We have
	\[
	\mathrm{Int}(\mathcal D)
	=
	\Delta(C_0^8)+\Delta(C_1^8)
	+\Delta(C_1^8,C_0^8)+\Delta(C_0^8,C_1^8)
	+\Delta(C_2^8)+\Delta(C_3^8)
	+\Delta(C_3^8,C_2^8)+\Delta(C_2^8,C_3^8).\]
	 	By Lemma~\ref{lemma:cyc},
	$
	\Delta(C_j^8)=\sum_{i=0}^7 (i,0)_8C_{i+j}^8,
	$ and 	$\Delta(C_{j+l}^8,C_l^8)=\sum_{i=0}^7 (i,j)_8C_{i+l}^8.
	$ 
	Hence each element of $C_i^8$ occurs in $\mathrm{Int}(\mathcal D)$
	precisely
	\[
		(i,0)_8+(i-1,0)_8+(i,1)_8+(i-1,7)_8
		+(i-2,0)_8+(i-3,0)_8+(i-2,1)_8+(i-3,7)_8	\]
	times.
	By 
	Theorem~\ref{thm:e=8cyclo}, we have  
	$
	(7,0)_8=(3,0)_8=(1,1)_8,
	(6,0)_8=(2,0)_8, 
	(5,0)_8=(1,0)_8=(4,1)_8=(7,7)_8,
    (4,0)_8=(0,0)_8,
	(6,1)_8=(1,7)_8,
    (3,7)_8=(0,1)_8$
    and
    $
	(5,7)_8=(2,1)_8.
	$
	Therefore each element of $C_j^8$ occurs in $\mathrm{Int}(\mathcal D)$
	precisely
	$
	X=(0,0)_8+(1,1)_8+(2,0)_8+2(1,0)_8+(0,1)_8+(1,7)_8+(2,1)_8
	$ 
	times when $j\in\{0,2,4,6\}$.
	
	Similarly, using
	$ 
	(3,1)_8=(2,1)_8=(6,7)_8,
	(5,1)_8=(0,7)_8,
	(7,1)_8=(1,2)_8=(2,7)_8
    $
    and
    $
    (4,7)_8=(1,1)_8
	$ 
	each element of $C_j^8$ occurs in $\mathrm{Int}(\mathcal D)$ precisely
	$
	Y=(1,0)_8+(0,0)_8+2(1,1)_8+(2,0)_8+(0,7)_8+(1,2)_8+(2,1)_8
	$
	times when $j\in\{1,3,5,7\}$. 

For the cyclotomic numbers, by Theorem~\ref{thm:e=8cyclo} there are two
	separate cases to consider, depending on whether or not $2$ is a quartic
	 residue  in $\mathbb F_q$. However, in both cases we obtain the same values
	for $X$ and $Y$, namely
	$
	X=(q-5+2y-2b)/8,$ and $ 
	Y=(q-5-2y+2b)/8.
	$ Thus $\mathcal D$ forms a relative DPDF with respect to $C_0^2$ whenever
	the two frequencies are distinct. If $y=b$, then the two frequencies
	coincide and every nonzero element of $\mathbb F_q$ occurs exactly
	$
	(q-5)/8
	$ times in the multiset of internal differences of $\mathcal D$, so
	$\mathcal D$ is a DDF.
\end{proof}
\begin{theorem}\label{thm:DPDF2union23}
Let $q=8f+1$ be a prime power with $f$ odd. Suppose that
$q=x^2+4y^2=a^2+2b^2$ are the unique proper representations of $q$. 
    
Let
	$
	\mathcal D_1=\{C_0^8\cup C_3^8,\ C_1^8\cup C_6^8\}
	$
	and
	$
	\mathcal D_2=\{C_0^8\cup C_5^8,\ C_2^8\cup C_7^8\}.
	$
Then
\begin{itemize}
\item if $y\ne -b$, $\mathcal D_1$ is a
	$
	(q,2,(q-1)/4;(q-5-2y-2b)/8,(q-5+2y+2b)/8)
	$-DPDF relative to $C_0^2$, and $\mathcal D_2$ is a
	$
	(q,2,(q-1)/4;(q-5+2y+2b)/8,(q-5-2y-2b)/8)
	$-DPDF relative to $C_0^2$;
\item if $y=-b$, $\mathcal D_1$ and $\mathcal D_2$ are $(q,2,(q-1)/4,(q-5)/8)$-DDFs.
\end{itemize}
\end{theorem}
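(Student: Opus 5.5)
I will follow the proof of Theorem~\ref{thm:DPDF2union1} step by step, changing only the class indices. For $\mathcal D_1=\{C_0^8\cup C_3^8,\ C_1^8\cup C_6^8\}$, the multiset $\mathrm{Int}(\mathcal D_1)$ splits as
\[
\Delta(C_0^8)+\Delta(C_3^8)+\Delta(C_3^8,C_0^8)+\Delta(C_0^8,C_3^8)+\Delta(C_1^8)+\Delta(C_6^8)+\Delta(C_6^8,C_1^8)+\Delta(C_1^8,C_6^8).
\]
By Lemma~\ref{lemma:cyc}(i),(ii), each element of $C_i^8$ then occurs
\[
N_i=(i,0)_8+(i-3,0)_8+(i,3)_8+(i-3,5)_8+(i-1,0)_8+(i-6,0)_8+(i-1,5)_8+(i-6,3)_8
\]
times, with indices taken modulo $8$. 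This uses $\Delta(C_{3}^8,C_0^8)=\sum_i (i,3)_8C_i^8$ and $\Delta(C_0^8,C_3^8)=\sum_i(i,5)_8C_{i+3}^8$, and similarly for the pair $\{1,6\}$.

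Next I will use the symmetries from Theorem~\ref{thm:e=8cyclo}, valid since $q\equiv 9 \bmod 16$ and so $f$ is odd. These are $(i,0)_8=(i+4,0)_8$ together with the general relations $(i,j)_8=(-i,j-i)_8$ and $(i,j)_8=(j+4,i+4)_8$. With them I will show that $N_i=N_{i+2}$, so that $N_i$ depends only on the parity of $i$. I will write the result as $X=N_0$ for the even classes, i.e.\ $C_0^2$, and $Y=N_1$ for the odd classes, i.e.\ $C_1^2$. I will reduce each of $X$ and $Y$ to a sum of eight cyclotomic numbers drawn from the standard list of representatives, as in the proof of Theorem~\ref{thm:DPDF2union1}.

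Then I will substitute the explicit formulae of Theorem~\ref{thm:e=8cyclo}, once for $2$ a quartic residue and once for $2$ a quartic nonresidue. The aim is $X=(q-5-2y-2b)/8$ and $Y=(q-5+2y+2b)/8$ in both cases. As a check, $X+Y$ must equal $(q-5)/4$: $\mathcal D_1$ consists of two sets of size $(q-1)/4$, so $\mathrm{Int}(\mathcal D_1)$ contains $2\cdot\frac{q-1}{4}\cdot\frac{q-5}{4}$ elements spread over $q-1$ nonzero elements. So only $X-Y=-(y+b)/2$ really needs to be established. If $y=-b$ the two frequencies coincide and $\mathcal D_1$ is a DDF with $\lambda=(q-5)/8$; otherwise it is a DPDF relative to $C_0^2$.

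For $\mathcal D_2=\{C_0^8\cup C_5^8,\ C_2^8\cup C_7^8\}$ I will avoid repeating the computation by observing that $\mathcal D_2$ is the image of $\mathcal D_1$ under a multiplier. Multiplying by $\alpha^{-1}$ sends $C_1^8\cup C_6^8$ to $C_0^8\cup C_5^8$ and $C_0^8\cup C_3^8$ to $C_7^8\cup C_2^8$. Hence $\mathrm{Int}(\mathcal D_2)=\alpha^{-1}\mathrm{Int}(\mathcal D_1)$, and since $\alpha^{-1}C_0^2=C_1^2$ the two frequencies are swapped, which gives the stated parameters. The main obstacle is the second and third steps: reducing the sixteen cyclotomic numbers to representatives correctly, and then carrying out the sign-sensitive substitution in the two quartic-residue cases. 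There the $x$- and $a$-terms must cancel to leave only $\pm(y+b)$. I will organise this by first computing $X-Y$ symbolically in terms of representatives before substituting.
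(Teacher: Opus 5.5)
Your proposal is correct and follows essentially the same route as the paper: the same eight-term count $N_i$ from Lemma~\ref{lemma:cyc}, then reduction via the relations of Theorem~\ref{thm:e=8cyclo} to one frequency $X$ on $C_0^2$ and one $Y$ on $C_1^2$, then substitution in both quartic-residue cases, where indeed $X-Y=[(1,1)_8-(1,0)_8]+[(1,7)_8-(1,2)_8]+[(0,3)_8-(0,5)_8]=-(y+b)/2$ in each case. Your two economies are both valid and are small improvements over the paper's direct computation of $X$ and $Y$ and its ``similarly'' for the second family: the counting identity $X+Y=(q-5)/4$, which means only $X-Y$ must be computed, and obtaining $\mathcal D_2=\alpha^{-1}\mathcal D_1$ by a multiplier, which swaps the two frequencies.
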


\begin{proof}
	We have
	\[
	\mathrm{Int}(\mathcal D_1)
	=
	\Delta(C_0^8)+\Delta(C_3^8)
	+\Delta(C_3^8,C_0^8)+\Delta(C_0^8,C_3^8)
	+\Delta(C_1^8)+\Delta(C_6^8)
	+\Delta(C_6^8,C_1^8)+\Delta(C_1^8,C_6^8).
	\]
	By Lemma~\ref{lemma:cyc}, each element of $C_i^8$ occurs in $\mathrm{Int}(\mathcal D_1)$
	precisely
	$(i,0)_8+(i-3,0)_8+(i,3)_8+(i-3,5)_8
	+(i-1,0)_8+(i-6,0)_8+(i-1,5)_8+(i-6,3)_8$ 
	times.
	By Theorem~\ref{thm:e=8cyclo}, each element of $C_j^8$ occurs in $\mathrm{Int}(\mathcal D_1)$
	precisely
	$
	X=(0,0)_8+(1,0)_8+(0,3)_8+2(1,1)_8+(2,0)_8+(1,3)_8+(1,7)_8
	$
	times when $j\in\{0,2,4,6\}$, and precisely
	$
	Y=(0,0)_8+2(1,0)_8+(1,1)_8+(2,0)_8+(0,5)_8+(1,3)_8+(1,2)_8
	$
	times when $j\in\{1,3,5,7\}$.
	Regardless of whether or not $2$ is a quartic 
	residue in $\mathbb F_q$, we obtain the same values
	for $X$ and $Y$, namely
	$
	X=(q-5-2y-2b)/8, 
	Y=(q-5+2y+2b)/8.
	$
Similarly, for $\mathcal D_2$, each element of $C_j^8$ occurs in $\mathrm{Int}(\mathcal D_2)$ precisely $Y=(q-5+2y+2b)/8$ times when $j\in\{0,2,4,6\}$, and precisely $X=(q-5-2y-2b)/8$ times when $j\in\{1,3,5,7\}$.
If $y=-b$, then in both cases the two frequencies coincide and equal $(q-5)/8$.
\end{proof}

We have the following corollary of Theorem~\ref{thm:DPDF2union23} for $q=p^2$, where $p\equiv5\pmod8$.

\begin{corollary}\label{cor:DPDFp2}
	Let $q=p^2$, where $p\equiv5\pmod8$ is a prime. Let
	$
	\mathcal D_1=\{C_0^8\cup C_3^8,\ C_1^8\cup C_6^8\}
	$ 
	and
	$
	\mathcal D_2=\{C_0^8\cup C_5^8,\ C_2^8\cup C_7^8\}.
	$
	Then $\mathcal D_1$ is a
	$
	(q,2,(q-1)/4;(q-5-2y)/8,(q-5+2y)/8)
	$- 
	DPDF relative to $C_0^2$, and $\mathcal D_2$ is a
	$
	(q,2,(q-1)/4;(q-5+2y)/8,(q-5-2y)/8)
	$-
	DPDF relative to $C_0^2$.
\end{corollary}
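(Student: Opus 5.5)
The plan is to deduce the corollary from Theorem~\ref{thm:DPDF2union23} by checking its hypotheses for $q=p^2$ with $p\equiv 5 \pmod 8$. The key fact to establish is that $b=0$ in the proper representation $q=a^2+2b^2$. Once this holds, the parameters $(q-5\mp 2y-2b)/8$ become $(q-5\mp 2y)/8$. The condition $y\neq -b$ then reads $y\neq 0$, which places us in the DPDF case rather than the DDF case.

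First, $q=8f+1$ with $f$ odd. Since $p\equiv 5\pmod 8$, we have $p^2\equiv 25\equiv 9 \pmod{16}$. Hence $q-1\equiv 8\pmod{16}$, so $f=(q-1)/8$ is odd, and the standing hypothesis of Theorem~\ref{thm:DPDF2union23} holds.

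Next, identify the two representations using Theorem~\ref{thm:e=8cyclo} (the Appendix description of proper representations).

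For $q=a^2+2b^2$: the prime $p\equiv 5\pmod 8$ is not of the form $a^2+2b^2$, since $-2$ is a nonresidue mod $p$. So $p$ is inert in $\mathbb{Z}[\sqrt{-2}]$, and the proper representation of $p^2$ is $a=\pm p$, $b=0$, with the sign fixed by $a\equiv 1 \pmod 4$. As $p\equiv 1\pmod 4$, this gives $a=p$.

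For $q=x^2+4y^2$: since $p\equiv 1\pmod 4$, we may write $p=s^2+4t^2$ with $t\neq 0$. The proper representation of $p^2$ comes from squaring $s+2ti$ in $\mathbb{Z}[i]$, namely $x=\pm(s^2-4t^2)$ and $y=\pm 2st$. Here $s$ is odd, so $y\neq 0$, and $y=0$ would force $p$ to be a square. Thus $y\neq 0=-b$.

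Substituting $b=0$ into the parameters of Theorem~\ref{thm:DPDF2union23} gives exactly the stated values for $\mathcal D_1$ and $\mathcal D_2$. The two frequencies differ by $y/2\neq 0$, so each family is a genuine DPDF relative to $C_0^2$ rather than a DDF.

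The main obstacle is matching the paper's precise convention for the ``unique proper representation'' in Theorem~\ref{thm:e=8cyclo}. The paper's convention when $p\equiv 1\pmod 4$ but $q=p^2$ may differ from the one I have assumed (the representation with $\gcd$ conditions relative to $p$, or a sign normalization). The argument needs $b=0$ and $y\neq0$ under that convention. If the paper's convention instead requires $\gcd(a,p)=1$, I would fall back on the $e=8$ Gauss/Jacobi sum structure: $q=p^2$ with $p\equiv 5\pmod 8$ means the quadratic character of $-2$ is trivial on $\mathbb{F}_q$ but arises from the inert prime. In that case I would verify directly that the Jacobi sum giving $a+b\sqrt{-2}$ equals $\pm p$.
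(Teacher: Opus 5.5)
Your argument is correct and follows the paper's proof: you check $q=p^2\equiv 9\pmod{16}$ so that Theorem~\ref{thm:DPDF2union23} applies, then substitute $b=0$ and use $y\neq 0$, so that $y\neq -b$ and the DDF case is excluded. Your $\mathbb{Z}[\sqrt{-2}]$ and $\mathbb{Z}[i]$ computations only make explicit what the paper reads off directly from Theorem~\ref{thm:e=8cyclo}, so the convention worry in your last paragraph is moot: that theorem explicitly sets $a=\pm p^{m/2}$, $b=0$ whenever $p\not\equiv 1,3 \pmod 8$, and for $p\equiv 1\pmod 4$ it takes the proper representation, so $p\nmid x$ and hence $y\neq 0$.
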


\begin{proof}
	Since $p\equiv5\pmod8$, we have $q=p^2\equiv9\pmod{16}$, so $q=8f+1$
	with $f$ odd. Hence Theorem~\ref{thm:DPDF2union23} applies. Since $p\equiv5\pmod8 \equiv 1 \pmod 4$, by Theorem \ref{thm:e=8cyclo} we have $b=0$ in the
	proper representation $q=a^2+2b^2$ and $y \neq 0$ in the proper representation $x^2+4y^2$. It follows that the frequencies in
	Theorem~\ref{thm:DPDF2union23} reduce to
	$
	(q-5-2y)/8$, and $ (q-5+2y)/8,
	$ 
	which completes the proof.
\end{proof}

\begin{example}
For $q=25=(-3)^2+4(\pm2)^2=5^2+2(0)^2$, we have $y \neq 0$ and $b=0$.  In $\mathbb{F}_{25}$, take primitive element $\alpha$ to be a root of $x^2+2x+3 \in \mathbb{F}_5[x]$. Then $C_0^8=\{1, \alpha+3, 4 \alpha+1\}$, $C_1^8=\{\alpha,\alpha+2,3\alpha+3\}$, $C_3^8=\{\alpha+1,2\alpha,2\alpha+4\}$, $C_6^8=\{3,2\alpha+3,3\alpha+4\}$. Then $\mathcal D_1$ of Corollary \ref{cor:DPDFp2} is a $(25,2,6,2,3)$-DPDF relative to $C_0^2$ (here $y=2$).
\end{example}

\begin{theorem}\label{thm:DPDF2union0127-01361}
	Let \(q=p^m\), where
	$
	p\equiv5\pmod8
	$
	and
	$
	m\equiv2\pmod4.
	$
	Suppose that $q=x^2+4y^2$
	is the unique proper representation of \(q\). Let
	$
	\mathcal D_1=\{C_0^8\cup C_1^8,\ C_2^8\cup C_7^8\}
	$
	and
	$
	\mathcal D_2=\{C_0^8\cup C_1^8,\ C_3^8\cup C_6^8\}.
	$
	Then \(\mathcal D_1\) and \(\mathcal D_2\) are
	$
	(q,2,(q-1)/4;(q-5+2y)/8,(q-5-2y)/8)
	$
	-DPDFs relative to \(C_0^2\).
\end{theorem}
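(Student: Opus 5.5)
Following the proofs of Theorems \ref{thm:DPDF2union1} and \ref{thm:DPDF2union23}, I will write $\mathrm{Int}(\mathcal D_1)$ as a sum of eight difference multisets:
\[
\Delta(C_0^8)+\Delta(C_1^8)+\Delta(C_1^8,C_0^8)+\Delta(C_0^8,C_1^8)+\Delta(C_2^8)+\Delta(C_7^8)+\Delta(C_2^8,C_7^8)+\Delta(C_7^8,C_2^8).
\]
I will expand each term with Lemma \ref{lemma:cyc}, writing $\Delta(C_2^8,C_7^8)=\Delta(C_{3+7}^8,C_7^8)$ and $\Delta(C_7^8,C_2^8)=\Delta(C_{5+2}^8,C_2^8)$. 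This shows that each element of $C_i^8$ occurs
\[
(i,0)_8+(i-1,0)_8+(i,1)_8+(i-1,7)_8+(i-2,0)_8+(i-7,0)_8+(i-7,3)_8+(i-2,5)_8
\]
times. I will do the same for $\mathcal D_2$. The second set $C_3^8\cup C_6^8$ contributes $(i-3,0)_8+(i-6,0)_8+(i-6,5)_8+(i-3,3)_8$, since $C_3^8=C_{5+6}^8$ relative to $C_6^8$ and $C_6^8=C_{3+3}^8$ relative to $C_3^8$.

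Next I will check the setting of Theorem \ref{thm:e=8cyclo}. Since $p\equiv 5\pmod 8$ and $m\equiv 2\pmod 4$, we have $p^2\equiv 9\pmod{16}$, so $q\equiv 9\pmod{16}$ and $q=8f+1$ with $f$ odd. Also $b=0$, since $p\equiv 1\pmod 4$ gives the representation $a=\pm p^{m/2}$ (as in Corollary \ref{cor:DPDFp2}), while $y\neq 0$. I will also need whether $2$ is a quartic residue. In $\mathbb{F}_{p^2}$ every element of $\mathbb{F}_p$ is a square, and $2$ is a square in $\mathbb{F}_{p^2}$ but needs care at the quartic level. I expect it to be cleanest to check whether both branches of Theorem \ref{thm:e=8cyclo} give the same totals, as happened in Theorems \ref{thm:DPDF2union1} and \ref{thm:DPDF2union23}, so that I never need to decide the branch.

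With $f$ odd I will apply the symmetry relations $(i,j)_8=(i+4,j+4)_8$ and $(i,j)_8=(j-i,-i)_8$-type identities from Theorem \ref{thm:e=8cyclo}, exactly as listed in the earlier proofs. These show that the frequency depends only on the parity of $i$. Substituting the explicit formulas should give $X=(q-5+2y-2b+\varepsilon)/8$ for the even classes and $Y$ for the odd classes, where $\varepsilon$ collects terms in $x$ and $a$. Setting $b=0$ should collapse this to $(q-5+2y)/8$ for $C_0^2$ and $(q-5-2y)/8$ for $C_1^2$. As a consistency check, $X+Y$ must satisfy $\tfrac{q-1}{2}(X+Y)=2\cdot\tfrac{q-1}{4}\cdot\left(\tfrac{q-1}{4}-1\right)$, which gives $X+Y=(q-5)/4$. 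Since $y\neq 0$, the two frequencies are distinct, so each family is a genuine DPDF relative to $C_0^2$.

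The main obstacle is the bookkeeping: reducing the eight cyclotomic numbers (for example $(i-2,5)_8$ and $(i-7,3)_8$) to the canonical ones in the appendix table, and then confirming that the $x$ and $a$ contributions cancel in both quartic-residue branches. I will first try pairing terms via $(i,j)_8=(-i,j-i)_8$ to match the lists already used in the proofs of Theorems \ref{thm:DPDF2union1} and \ref{thm:DPDF2union23}. If the cancellation of $x$ and $a$ fails in general, I will instead use the special structure here: $a=\pm p^{m/2}$ and $x=\pm p^{m/2}$ with $a\equiv 1\pmod 4$ and $x\equiv 1\pmod 4$, and $p^{m/2}\equiv 5\pmod 8$, which forces $x=a$. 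I expect this relation to kill the remaining terms.
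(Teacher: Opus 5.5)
Your route is the paper's route: split $\mathrm{Int}(\mathcal D_1)$ into the eight terms (your indices $(i-7,3)_8$ and $(i-2,5)_8$ are correct), reduce with the $f$ odd relations of Theorem~\ref{thm:e=8cyclo}, and check that the $x$ and $a$ contributions cancel in both quartic-residue branches. They do cancel, so your fallback is never needed. That is fortunate, because the fallback is false. Since $p\equiv 1\pmod 4$, $x$ comes from the proper representation with $y\neq 0$, so $x\neq\pm p^{m/2}$, and in general $x\neq a$ (e.g.\ $q=25$ has $x=-3$, $a=5$). Also, $b=0$ holds because $p\equiv 5\pmod 8$, i.e.\ $p\not\equiv 1,3\pmod 8$, not merely because $p\equiv 1\pmod 4$.

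The step that is wrong as stated is that the symmetry relations alone make the frequency depend only on the parity of $i$. They give only $N_i=N_{i+4}$. In the letters of the $f$ odd table, $N_0=A+B+2I+J+K+L+N$ and $N_2=A+F+2I+J+M+N+O$ are different sums. In either branch they evaluate to $(q-5+2y\pm 2b)/8$ with opposite signs of $b$, and likewise $N_1,N_3=(q-5-2y\pm 2b)/8$. So the agreement on $C_0^2$ and on $C_1^2$ is exactly where $b=0$ enters, and you must compute all four of $N_0,\dots,N_3$, as the paper does.

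The identities you quote, $(i,j)_8=(i+4,j+4)_8$ and $(i,j)_8=(j-i,-i)_8$, are not valid for $f$ odd. For example $(0,1)_8=B$, whereas $(4,5)_8=(1,0)_8=I$. The correct generators are $(i,j)_8=(-i,j-i)_8$ and $(i,j)_8=(j+4,i+4)_8$.

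Finally, $\mathcal D_2$ needs no separate computation. Since $-1\in C_4^8$, we have $C_3^8\cup C_6^8=-(C_7^8\cup C_2^8)$, and $\Delta(-S)=\Delta(S)$ for any $S$. Hence $\mathrm{Int}(\mathcal D_2)=\mathrm{Int}(\mathcal D_1)$.
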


\begin{proof}
	Since \(p\equiv5\pmod8\) and \(m\equiv2\pmod4\), by
	Theorem~\ref{thm:e=8cyclo}, the representation
	$
	q=a^2+2b^2
	$
	satisfies \(b=0\). We have
	\[
	\mathrm{Int}(\mathcal D_1)
	=\Delta(C_0^8)+\Delta(C_1^8)
	+\Delta(C_1^8,C_0^8)+\Delta(C_0^8,C_1^8)
	+\Delta(C_2^8)+\Delta(C_7^8)
	+\Delta(C_7^8,C_2^8)+\Delta(C_2^8,C_7^8).
	\]
	By Lemma~\ref{lemma:cyc}, each element of \(C_i^8\) occurs in \(\mathrm{Int}(\mathcal D_1)\)
	precisely
	\[
	N_i=(i,0)_8+(i-1,0)_8+(i,1)_8+(i-1,7)_8
	+(i-2,0)_8+(i-7,0)_8+(i-2,5)_8+(i-7,3)_8
	\]
	times. By Theorem~\ref{thm:e=8cyclo}, $N_0=N_4=(0,0)_8+2(1,0)_8+(1,1)_8+(2,0)_8
	+(0,1)_8+(1,2)_8+(1,3)_8
	$
	and 
	$
	N_2=N_6=(0,0)_8+2(1,0)_8+(1,1)_8+(2,0)_8
	+(2,1)_8+(1,7)_8+(0,5)_8
	$. In Theorem~\ref{thm:e=8cyclo} there are two
	separate cases to consider, depending on whether or not \(2\) is a quartic 
	residue  in \(\mathbb F_q\); both cases yield
	$
	N_0=(q-5+2y-2b)/8
	$
	and
	$
	N_2=(q-5+2y+2b)/8.
	$
	Since \(b=0\), we get
	$
	N_0=N_2=(q-5+2y)/8.
	$
	On the other hand,
	$
	N_1=N_5=(0,0)_8+(1,0)_8+2(1,1)_8+(2,0)_8
	+(0,7)_8+(1,3)_8+(1,7)_8
	$
	times
    and
    $N_3=N_7=(0,0)_8+(1,0)_8+2(1,1)_8+(2,0)_8
	+(3,1)_8+(2,7)_8+(0,3)_8
	$
	times. Again by Theorem~\ref{thm:e=8cyclo}, in both cases we obtain
	$
	N_1=(q-5-2y-2b)/8
	$
	and
	$
	N_3=(q-5-2y+2b)/8.
	$
	Since \(b=0\), we get
	$
	N_1=N_3=(q-5-2y)/8.
	$
The proof for $\mathcal{D}_2$ is analogous.
\end{proof}

\subsubsection{Relative EPDFs from cyclotomic classes}
In this section, we use collections of cyclotomic classes to obtain relative EPDFs.

\begin{theorem}\label{thm:EPDF2QR}
Let $q=8f+1$ be a prime power with $f$ even, and suppose that
$q=x^2+4y^2=a^2+2b^2$ are the unique proper representations of $q$.  Suppose that $2$ is a quartic residue.
Let $\mathcal D=\{C_0^{8},C_4^{8}\}$.  If $a=1$, then $\mathcal D$ is a
	$(q,2, (q-1)/{8};(q+1-2x)/32,(q-3+2x)/32)$-EPDF
	relative to $C_0^2$.
\end{theorem}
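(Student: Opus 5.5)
Since $\mathcal D$ has only two sets, I would compute $\mathrm{Ext}(\mathcal D)=\Delta(C_0^8,C_4^8)+\Delta(C_4^8,C_0^8)$ directly via Lemma~\ref{lemma:cyc}(ii). Taking $l=4$, $j=4$ gives $\Delta(C_0^8,C_4^8)=\Delta(C_{4+4}^8,C_4^8)=\sum_{i=0}^7 (i,4)_8\,C_{i+4}^8$. Taking $l=0$, $j=4$ gives $\Delta(C_4^8,C_0^8)=\sum_{i=0}^7 (i,4)_8\,C_{i}^8$. Hence each element of $C_i^8$ occurs in $\mathrm{Ext}(\mathcal D)$ exactly
$$M_i=(i-4,4)_8+(i,4)_8$$
times, which is automatically invariant under $i\mapsto i+4$. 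So only $M_0,M_1,M_2,M_3$ need checking.

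Because $f$ is even, $q\equiv 1 \pmod{16}$ and $-1\in C_0^8$. I would therefore use the $f$ even, $2$ quartic residue case of Theorem~\ref{thm:e=8cyclo}. I would first use the standard symmetries for $f$ even, $(i,j)_8=(-i,j-i)_8=(j,i)_8$, to rewrite each $(i,4)_8$ in terms of the fifteen basic cyclotomic constants listed there. For instance $(i,4)_8=(4,i)_8=(-4,-i+... )$; concretely I expect $M_0=2(0,4)_8$, $M_2=(2,4)_8+(6,4)_8$, $M_1=(1,4)_8+(5,4)_8$ and $M_3=(3,4)_8+(7,4)_8$, with further pairings such as $(1,4)_8=(3,7)_8$-type identities.

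Next I would substitute the explicit formulas, which are of the shape $\frac{1}{64}(q+c_0+c_1x+c_2a+c_3y+c_4b)$. The claim is that, once $a=1$ is imposed, $M_0=M_2=(q+1-2x)/32$ and $M_1=M_3=(q-3+2x)/32$. The odd-class values $M_1,M_3$ should have their $y$- and $b$-contributions cancel in pairs, since $(i,4)_8$ and $(i+4,4)_8$ carry opposite signs on the $\pm 16y$, $\pm 24b$ type terms. For the even classes, $M_0$ and $M_2$ should differ only by a multiple of $(a-1)$, possibly together with $y$ terms that cancel. I expect the hypothesis $a=1$ to be needed exactly to equalise $M_0$ and $M_2$, in the same way $a=1$ was used in Theorem~\ref{thm:DPDF-single0146}. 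The union of $C_0^8,C_2^8,C_4^8,C_6^8$ is $C_0^2$, so this gives the stated EPDF relative to $C_0^2$. The two parameters are distinct unless $4x=4$, i.e.\ $x=1$, which is not an issue for the statement.

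The main obstacle is bookkeeping. The $f$-even table is not the one used in the preceding theorems, so I must reduce each $(i,4)_8$ to a listed representative correctly and track the sign conventions for $x$, $a$, $y$, $b$. I would sanity-check the result on a small example such as $q=17$ ($x=1$, $a=-3$, so it is not applicable) or a $q$ with $a=1$, for instance $q=3^4=81=1^2+2\cdot 40/... $, by computing in GAP before writing the final verification.
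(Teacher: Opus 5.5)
Your plan is essentially the paper's proof: it also counts each element of $C_i^8$ as occurring $(i,4)_8+(i+4,4)_8$ times in $\mathrm{Ext}(\mathcal D)$, reads off the $f$ even, $2$-quartic-residue column, and uses $a=1$ exactly to equalise the even classes. Your $y$/$b$ sign-cancellation heuristic is wrong but unnecessary: $(i,4)_8$ and $(i+4,4)_8$ lie in the same class ($E$, $K$, $N$, $L$ for $i=0,1,2,3$), and none of these involves $y$ or $b$ in that column, so directly $M_0=(q-7-2x+8a)/32$, $M_2=(q+1-2x)/32$ and $M_1=M_3=(q+1+2x-4a)/32$. Also, $q=81$ has $a=-7$, so your proposed test case does not apply; a valid one is e.g.\ $q=289=1^2+2\cdot 12^2$.
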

\begin{proof}
Since $\Delta(C_4^8,C_0^8)=\sum_{i=0}^7 (i,4)_8 C_i^8$ and $\Delta(C_0^8,C_4^8)=\sum_{i=0}^7 (i,4)_8 C_{i+4}^8$, each $C_i^8$ occurs $N_i=(i,4)_8+(i+4,4)_8$ times in $\mathrm{Ext}(\mathcal{D})$.  By Theorem \ref{thm:e=8cyclo} we have $(i,4)_8=(i+4,4)_8$ ($0 \leq i \leq 3$).  Hence $N_i=N_{i+4}=2(i,4)_8$ ($0 \leq i \leq 3$), i.e each $C_i^4$ occurs $2(i,4)_8$ times.

By Theorem \ref{thm:e=8cyclo}, as $2$ is a quartic residue, we obtain 
$(1,4)_8=(3,4)_8=(q+1+2x-4a)/64$, so each element of $C_1^2$ occurs $(q+1+2x-4a)/32$ times in $\mathrm{Ext}(\mathcal{D})$. We have $(0,4)_8=(q-7-2x+8a)/64$ and $(2,4)_8=(q+1-2x)/64$; these are equal if $a=1$, in which case $C_0^2$ occurs $(q+1-2x)/32$ times (and $C_1^2$ occurs $(q-3+2x)/32$ times).
\end{proof}

\begin{theorem}\label{thm:EPDF1111}
	Let $q=8f+1$ be a prime power with $f$ even, and suppose that $q=x^2+4y^2=a^2+2b^2$ are the unique proper representations of $q$. Suppose that $2$ is not a
	quartic residue. Then
	$
	 D=\{C_0^8,C_1^8,C_4^8,C_5^8\}
	$ 
	is:
	\begin{itemize}
		\item a
		$
		(q,4,(q-1)/8;(3q-3+8y)/16,(3q-3-8y)/16)
		$-EPDF relative to $C_0^2$, if $a=-3$ and $y\ne0$;
		
		\item a
		$
		(q,4,(q-1)/8,(3q-3)/16)
		$-EDF, if $a=-3$ and $y=0$.
	\end{itemize}
\end{theorem}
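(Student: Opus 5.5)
We compute the external difference multiset $\mathrm{Ext}(\mathcal D)$ with Lemma \ref{lemma:cyc}(ii), in the same way as in the proof of Theorem \ref{thm:EPDF2QR}. Here $\mathcal D=\{C_0^8,C_1^8,C_4^8,C_5^8\}$. Write the index set as $I=\{0,1,4,5\}$. By Lemma \ref{lemma:cyc}(ii) with $l$ the second index, for each ordered pair $(u,l)$ of distinct elements of $I$ we have
\[
\Delta(C_u^8,C_l^8)=\sum_{i=0}^7 (i,u-l)_8\,C_{i+l}^8 .
\]
It follows that each element of $C_j^8$ occurs in $\mathrm{Ext}(\mathcal D)$ exactly
\[
N_j=\sum_{\substack{u,l\in I\\ u\neq l}} (j-l,\,u-l)_8
\]
times. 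This is a sum of twelve cyclotomic numbers. The differences $u-l$ lie in $\{\pm1,\pm3,\pm4\}$ modulo $8$, and each of $\pm1,\pm3,4$ arises the appropriate number of times.

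\textbf{Step 1: halve the work.} Since $f$ is even, $q\equiv 1 \pmod{16}$, so $-1\in C_0^8$. Also $I+4=I$, because multiplying by $\alpha^4$ permutes the sets of $\mathcal D$. Together these give $N_j=N_{j+4}$. This can also be read off from the symmetries $(i,j)_8=(i+4,j+4)_8$-type relations in Theorem \ref{thm:e=8cyclo}. So it is enough to compute $N_0,N_1,N_2,N_3$.

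\textbf{Step 2: reduce to basic numbers.} Using the $f$ even symmetries $(i,j)_8=(j,i)_8=(-i,j-i)_8$ from Theorem \ref{thm:e=8cyclo}, we rewrite each $N_j$ in terms of the standard representatives of the cyclotomic numbers of order $8$. We then substitute the explicit formulas for the case where $2$ is not a quartic residue. These formulas are linear in $q,x,y,a,b$ with denominator $64$.

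\textbf{Step 3: impose $a=-3$.} We expect the $b$-terms to cancel, and the $a$-terms to cancel once $a=-3$, leaving:
\begin{itemize}
\item $N_0=N_2=(3q-3+8y)/16$, so $C_0^2$ occurs this many times;
\item $N_1=N_3=(3q-3-8y)/16$, so $C_1^2$ occurs this many times.
\end{itemize}
As a consistency check, the total count must be $12\left((q-1)/8\right)^2$. With $4f$ elements in each of $C_0^2$ and $C_1^2$, this gives average frequency $3(q-1)/16$, which is consistent with the claim. If $y\neq 0$ the two frequencies differ, so $\mathcal D$ is an EPDF relative to $C_0^2$. If $y=0$ they coincide at $(3q-3)/16$, so $\mathcal D$ is an EDF.

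\textbf{Main obstacle.} The hard part is the bookkeeping in Step 3. The table for $f$ even, $2$ a non-quartic residue has fifteen distinct classes with sign-sensitive $\pm 16y$, $\pm 2a$ and $\pm 24b$-type terms. The $b$-contributions must be shown to cancel across the pairs $(u-l)=\pm1$ and $\pm3$. The $a$-contributions must combine to a multiple of $(a+3)$, which yields the hypothesis $a=-3$. To organise this, we first group the twelve terms by the difference $d=u-l\in\{1,7,3,5,4\}$. We then use the relation $(i,d)_8=(i-d,-d)_8$ to pair the $d$ and $-d$ contributions, which should make the cancellations visible before substituting the formulas.
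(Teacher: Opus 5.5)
Your proposal follows the paper's proof: the twelve-term count $N_j=\sum_{u\neq l}(j-l,u-l)_8$, the reduction $N_j=N_{j+4}$, rewriting via the $f$-even relations of Theorem~\ref{thm:e=8cyclo}, and substituting from the ``$2$ not a quartic residue'' column, with $a=-3$ equalising the frequencies. The bookkeeping you left as an expectation in Step 3 does work out: the $x$- and $b$-terms cancel, giving $64N_0=12q-36-8a+32y$, $64N_2=12q+12+8a+32y$, $64N_1=12q-36-8a-32y$ and $64N_3=12q+12+8a-32y$, so $N_0-N_2=N_1-N_3=-(a+3)/4$ and $a=-3$ yields exactly the stated frequencies.
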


\begin{proof}
	We have
	\[
	\begin{aligned}
		\mathrm{Ext}(\mathcal D)
		={}&
		\Delta(C_0^8,C_1^8)+\Delta(C_1^8,C_0^8)
		+\Delta(C_0^8,C_4^8)+\Delta(C_4^8,C_0^8)\\
		&+\Delta(C_0^8,C_5^8)+\Delta(C_5^8,C_0^8)
		+\Delta(C_1^8,C_4^8)+\Delta(C_4^8,C_1^8)\\
		&+\Delta(C_1^8,C_5^8)+\Delta(C_5^8,C_1^8)
		+\Delta(C_4^8,C_5^8)+\Delta(C_5^8,C_4^8).
	\end{aligned}
	\]
	By Lemma~\ref{lemma:cyc},
	$
	\Delta(C_{j+l}^8,C_l^8)
	=
	\sum_{i=0}^7(i,j)_8C_{i+l}^8.
	$
	Hence each element of $C_i^8$ occurs in $\mathrm{Ext}(\mathcal D)$ precisely
	\[
	\begin{aligned}
		&N_i=(i-1,7)_8+(i,1)_8+(i-4,4)_8
		+(i,4)_8+(i-5,3)_8+(i,5)_8\\
		&\quad +(i-4,5)_8+(i-1,3)_8+(i-5,4)_8
		+(i-1,4)_8+(i-5,7)_8+(i-4,1)_8
	\end{aligned}
	\]
	times.
	
	By Theorem~\ref{thm:e=8cyclo}, we have
	$
	(0,1)_8=(7,7)_8,
	(0,4)_8=(4,4)_8,
	(0,5)_8=(3,3)_8,
	$
	$
	(3,7)_8=(4,1)_8=(4,5)_8=(7,3)_8,
	$
	and
	$
	(3,4)_8=(7,4)_8$.
	So $N_0=N_4=
	2(0,1)_8+2(0,4)_8+2(0,5)_8
	+4(3,7)_8+2(3,4)_8.
	$
	Similarly, using
	$
	(1,3)_8=(6,5)_8,
	(1,7)_8=(2,1)_8,
	$
	$
	(2,4)_8=(5,7)_8=(6,4)_8,
	(2,5)_8=(5,3)_8,
	$
	and
	$
	(1,6)_8=(6,1)_8,
	$
	we have
	$
	N_2=N_6=
	2(1,3)_8+2(3,7)_8+2(1,7)_8
	+2(2,4)_8+2(2,5)_8+2(1,6)_8
	$.
	For the cyclotomic numbers, by Theorem~\ref{thm:e=8cyclo}, since $2$
	is not a quartic residue, we obtain
	$
	N_0=(12q-36-8a+32y)/64
	$
	and
	$
	N_2=(12q+12+8a+32y)/64.
	$
	Thus, if $a=-3$, then
	$
	N_0=N_2=(3q-3+8y)/16.
	$
	
	On the other hand, using
	$
	(0,3)_8=(5,5)_8,
	(0,7)_8=(1,1)_8,
	$
	$
	(3,7)_8=(5,1)_8,
	$
	and
	$
	(3,4)_8=(4,3)_8=(4,7)_8=(1,5)_8$,
	we have
    $N_1=N_5=
	2(0,3)_8+2(0,4)_8+2(0,7)_8
	+2(3,7)_8+4(3,4)_8
	$.
	Using
	$
	(2,3)_8=(3,2)_8=(5,7)_8=(7,5)_8,
	(2,7)_8=(3,1)_8,
	$
	$
	(3,4)_8=(7,4)_8,
	(3,5)_8=(6,3)_8,
	$
	and
	$
	(6,7)_8=(7,1)_8$,
    we have 
	$N_3=N_7=
	2(2,3)_8+2(2,4)_8+2(2,7)_8
	+2(3,4)_8+2(3,5)_8+2(6,7)_8
	$.
	Again by Theorem~\ref{thm:e=8cyclo}, we obtain
	$
	N_1=(12q-36-8a-32y)/64
	$
	and
	$
	N_3=(12q+12+8a-32y)/64.
	$
	Thus, if $a=-3$, then
	$
	N_1=N_3=(3q-3-8y)/16$.
If $y=0$, then the two frequencies coincide and every nonzero element of $\mathbb F_q$ occurs exactly
$(3q-3)/16$ times in $\mathrm{Ext}(\mathcal D)$. 
\end{proof}

\begin{example}
Let $q=17=1^2+4(\pm2)^2=(-3)^2+2(\pm 2)^2$; here $a=-3$ and $y \neq 0$.  Take primitive element $\alpha=3$ of $\mathbb{F}_{17}$; then $\alpha^6=2 \in C_2^4$ and using  Theorem \ref{thm:EPDF1111}, $D=\{ \{1,16\}, \{3,14\}, \{4,13\}, \{5,12\}\}$ is a $(17,4,2; 4,2)$-EPDF relative to $C_0^2$.
\end{example}

We end this section with a construction in which the sets are of different sizes.
\begin{theorem}\label{thm:EPDF026}
Let $q=8f+1$ be a prime power with $f$ odd. Suppose that
$q=x^2+4y^2=a^2+2b^2$ are the unique proper representations of $q$. Let
	$
	\mathcal D=\{C_0^8,\ C_2^8\cup C_6^8\}.
	$ 
	Then $\mathcal D$ is:
	\begin{itemize}
		\item a
		$(
		q,2;(q-1)/8,(q-1)/4;
		(q-3+2x)/16,
		(q+1-2x)/16
		)$-EPDF
		relative to $C_0^2$, if $x\ne1$;
		
		\item a
		$
	(q,2;(q-1)/8,(q-1)/4;
		(q-1)/16)$-EDF,
	if $x=1$.
	\end{itemize}
\end{theorem}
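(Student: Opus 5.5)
The two sets are $D_1=C_0^8$ and $D_2=C_2^8\cup C_6^8$. Since $f$ is odd, $q\equiv 9 \pmod{16}$, so $-1\in C_4^8$ and $-C_i^8=C_{i+4}^8$. In particular $C_2^8\cup C_6^8=C_2^4$, and $D_2=-D_2$.

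The external multiset is
$$\mathrm{Ext}(\mathcal D)=\Delta(C_2^8,C_0^8)+\Delta(C_6^8,C_0^8)+\Delta(C_0^8,C_2^8)+\Delta(C_0^8,C_6^8).$$
By Lemma~\ref{lemma:cyc}(ii), with $l=0$ and $l=2,6$ respectively, we have
$$\Delta(C_j^8,C_0^8)=\sum_i (i,j)_8C_i^8, \qquad \Delta(C_0^8,C_l^8)=\sum_i (i,8-l)_8 C_{i+l}^8 .$$
Hence each element of $C_i^8$ occurs
$$N_i=(i,2)_8+(i,6)_8+(i-2,6)_8+(i-6,2)_8$$
times. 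A cleaner route is to note that $\Delta(C_0^8,D_2)=-\Delta(D_2,C_0^8)$. Since negation shifts class indices by $4$, this gives $N_i=M_i+M_{i+4}$, where $M_i=(i,2)_8+(i,6)_8$ counts $C_i^8$ in $\Delta(D_2,C_0^8)$. This makes $N_i=N_{i+4}$ automatic, so the four frequencies $N_0,N_1,N_2,N_3$ determine everything.

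The next step is to reduce $N_0,\dots,N_3$ to a few basic cyclotomic numbers using the order-$8$ symmetries from Theorem~\ref{thm:e=8cyclo} valid for $f$ odd, namely $(i,j)_8=(j+4,i+4)_8$ and $(i,j)_8=(-i,j-i)_8$. Then I substitute the explicit formulas from that theorem in both subcases, $2$ a quartic residue or not.

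I expect, as in Theorems~\ref{thm:DPDFC0C2} and \ref{thm:DPDF2union1}, that the $a$- and $b$-dependent terms cancel in the sums (the combination $(i,2)+(i,6)$ only sees the order-$4$ structure) and $y$ drops as well. The target is
$$N_0=N_2=\tfrac{q-3+2x}{16},\qquad N_1=N_3=\tfrac{q+1-2x}{16}.$$
As a check, the total must be $2|D_1||D_2|=\frac{(q-1)^2}{16}$, and indeed
$$\tfrac{q-1}{2}\Bigl(\tfrac{q-3+2x}{16}+\tfrac{q+1-2x}{16}\Bigr)=\tfrac{(q-1)^2}{16}.$$
An alternative shortcut that avoids order-$8$ tables: $\Delta(C_2^4,C_0^8)+\Delta(C_0^8,C_2^4)$ can be obtained by splitting $\Delta(C_0^4,C_2^4)$ via order-$4$ cyclotomic numbers. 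However, $C_0^8$ is only half of $C_0^4$, so the order-$8$ data is still needed.

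The conclusion then follows. If $x\ne1$ the two frequencies differ, so $\mathcal D$ is an EPDF relative to $C_0^2$ with the stated parameters. If $x=1$, both frequencies equal $(q-1)/16$, giving an EDF.

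The main obstacle is the bookkeeping: matching each $(i,j)_8$ appearing in $N_i$ to the tabulated representatives in Theorem~\ref{thm:e=8cyclo}, and confirming that the $a$, $b$, $y$ contributions cancel in both quartic-residue cases. I would first verify the formula numerically for $q=9$ (all classes singletons, $x=-3$), where it predicts $N_0=0$ and $N_1=1$.
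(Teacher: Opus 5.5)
Your setup is the paper's: the same four blocks of $\mathrm{Ext}(\mathcal D)$, the same count $N_i=(i,2)_8+(i,6)_8+(i-2,6)_8+(i-6,2)_8$, and the same plan of reducing via Theorem~\ref{thm:e=8cyclo} in both quartic-residue cases. The target values are also correct. But the proposal stops exactly where the content of the theorem lies. The frequencies are only ``expected'', and the reason you give for the expected cancellation is wrong.

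The sum $M_i=(i,2)_8+(i,6)_8$ does not ``only see the order-4 structure''. It counts $z\in C_i^8$ with $z+1\in C_2^4$, which is genuinely order-8 data. For example, $M_0=C+G$, and when $2$ is a quartic residue $64M_0=2q+2+12x+16a$, which depends on $a$. The cancellation happens only in $N_i=M_i+M_{i+4}$.

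To finish along your route, which is the paper's, the relations in the table give
\begin{itemize}
\item $N_0=N_2=C+G+2N$, which is the paper's $X=(0,2)_8+(0,6)_8+2(2,0)_8$;
\item $N_1=N_3=K+L+M+O$, which is the paper's $Y=(1,2)_8+(1,6)_8+(1,7)_8+(3,2)_8$.
\end{itemize}
Both columns of the table then give $64X=4q-12+8x$ and $64Y=4q+4-8x$. Your total-count check cannot catch an error in how the total splits between even and odd classes, so it does not replace this computation.

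The shortcut you dismissed actually works, and it is shorter than the paper's argument. Your own identity $N_i=M_i+M_{i+4}$ says
\[
N_i=(i,2)_8+(i,6)_8+(i+4,2)_8+(i+4,6)_8=(i,2)_4,
\]
because $C_i^4=C_i^8\cup C_{i+4}^8$ and $C_2^4=C_2^8\cup C_6^8$. Equivalently, since $-1\in C_4^8$ and $-C_2^4=C_2^4$, the bijection $(c,d)\mapsto(-d,-c)$ gives $\Delta(C_0^8,C_2^4)=\Delta(C_2^4,C_4^8)$. Hence $\mathrm{Ext}(\mathcal D)=\Delta(C_2^4,C_0^4)=\sum_{i=0}^{3}(i,2)_4\,C_i^4$ by Lemma~\ref{lemma:cyc}(ii). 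So the objection that $C_0^8$ is only half of $C_0^4$ is answered exactly by the negation symmetry you had already used.

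Now $(q-1)/4=2f$ is even, and Katre--Rajwade's $s$ is the $x\equiv1\pmod 4$ of Theorem~\ref{thm:e=8cyclo}. So Theorem~\ref{thm:KaRaThm} gives $(0,2)_4=(2,2)_4=(q-3+2x)/16$ and $(1,2)_4=(3,2)_4=(q+1-2x)/16$ directly. This needs no order-8 table and no case split on whether $2$ is a quartic residue.
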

\begin{proof}
	Here $q \equiv 9 \mod 16$. 
	We have
	\[
	\mathrm{Ext}(\mathcal D)
	=
	\Delta(C_0^8,C_2^8)+\Delta(C_2^8,C_0^8)
	+\Delta(C_0^8,C_6^8)+\Delta(C_6^8,C_0^8).
	\]
	By Lemma~\ref{lemma:cyc},
	$
	\Delta(C_{j+l}^8,C_l^8)
	=
	\sum_{i=0}^7(i,j)_8C_{i+l}^8.
	$
	Hence each element of $C_i^8$ occurs in $\mathrm{Ext}(\mathcal D)$ precisely
	$
	(i-2,6)_8+(i,2)_8+(i-6,2)_8+(i,6)_8
	$ 
	times.
	
	 By
	Theorem~\ref{thm:e=8cyclo}, we have
	$
	(6,6)_8=(2,2)_8=(2,0)_8,
	$
	$
	(0,2)_8=(2,6)_8,
	$
	and
	$
	(0,6)_8=(2,4)_8.
	$
	Therefore each element of $C_j^8$ with $j\in\{0,2,4,6\}$ occurs in
	$\mathrm{Ext}(\mathcal D)$ precisely
	$
	X=(0,2)_8+(0,6)_8+2(2,0)_8
	$ 
	times. Similarly, using
	$
	(7,6)_8=(1,7)_8,
	$
	$
	(1,2)_8=(3,6)_8,
	$
	and
	$
	(1,6)_8=(3,2)_8,
	$
	each element of $C_j^8$ with $j\in\{1,3,5,7\}$ occurs in
	$\mathrm{Ext}(\mathcal D)$ precisely
	$
	Y=(1,2)_8+(1,6)_8+(1,7)_8+(3,2)_8
	$ 
	times. 	For the cyclotomic numbers, by Theorem~\ref{thm:e=8cyclo} there are two
	separate cases to consider, depending on whether or not $2$ is a quartic
	residue in $\mathbb F_q$. In both cases we obtain
	$
	X=(q-3+2x)/16
	$
	and
	$
	Y=(q+1-2x)/16.
	$
	Therefore every element of $C_0^2$ occurs exactly
	$
	(q-3+2x)/16
	$
	times in $\mathrm{Ext}(\mathcal D)$, while every element of $C_1^2$
	occurs exactly
	$
	(q+1-2x)/16
	$
	times. Hence $\mathcal D$ is an EPDF relative to $C_0^2$ whenever
	$x\ne1$. If $x=1$, then the two frequencies coincide and every nonzero element of
	$\mathbb F_q$ occurs exactly
	$
	(q-1)/16
	$
	times in $\mathrm{Ext}(\mathcal D)$.
\end{proof}

\begin{example}
For $q=9$, we have $x=-3$ and $y=0$.  Taking primitive element $\alpha$ to be a root of $x^2+x+2$, applying Theorem \ref{thm:EPDF026} we have $C_0^8=\{1\}$, $C_2^8 \cup C_6^8=\{2\alpha+1,2\alpha\}$ and $\mathrm{Ext}(\mathcal{D})=\{\alpha,\alpha+1,2\alpha, 2\alpha+2\}=C_1^2$, so $\mathcal{D}$ is a $(9,2,1,2;0,1)$-EPDF relative to $C_0^2$.
\end{example}

\subsection{New DPDFs and EPDFs}
In this final section, we show that similar approaches to that of Theorem \ref{thm:DPDFnotclasses}, based on those of \cite{ChaDin}, will yield new constructions of standard (i.e., not relative) DPDFs and EPDFs, which partition $C_0^2$.  While previous cyclotomic DPDF/EPDF constructions have partitioned certain cyclotomic classes into smaller classes \cite{HucJoh}, the new constructions differ from these in that the smaller sets are not in general cyclotomic classes themselves.

\begin{theorem}\label{thm:q5mod8epdf}
	Let $q$ be a prime power congruent to $5$ modulo $8$.  
	Let $\gamma \in C_2^4$ and define $D_i=\{i,\gamma i\}$ ($i \in \mathbb{F}_q^*$). Let $\mathcal{D}=\{D_i: i \in C_0^4\}$. Then $\mathcal{D}$ is
	 a $(q,(q-1)/4,2,(q-5)/4)$-EDF and a $(q,(q-1)/4,2;0,1)$-DPDF if $1-\gamma \notin C_0^2$;
		\end{theorem}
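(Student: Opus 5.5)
The plan is to compute $\mathrm{Int}(\mathcal D)$ directly, and then obtain $\mathrm{Ext}(\mathcal D)$ for free from the identity $\mathrm{Int}(\mathcal D)+\mathrm{Ext}(\mathcal D)=\Delta(S)$, where $S=\cup_{i\in C_0^4}D_i$. This mirrors the proof of Theorem \ref{thm:DPDFnotclasses}, but here $S$ is an honest Paley PDS rather than a skew PDS.

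\emph{Step 1: $\mathcal D$ is a genuine family.} Since $q\equiv 5 \pmod 8$, we have $-1\in C_2^4$, so $-C_i^4=C_{i+2}^4$. Because $\gamma\in C_2^4$, we have $\gamma\neq 1$, so each $D_i$ has size $2$. For $i\in C_0^4$ we have $\gamma i\in C_2^4$. Hence for distinct $i,j\in C_0^4$ the sets $\{i,\gamma i\}$ and $\{j,\gamma j\}$ are disjoint: $i=\gamma j$ is impossible by class membership, and $\gamma i=\gamma j$ forces $i=j$. So $\mathcal D$ consists of $(q-1)/4$ disjoint $2$-subsets of $\mathbb F_q^*$, and
\[
S=C_0^4\cup \gamma C_0^4=C_0^4\cup C_2^4=C_0^2 .
\]

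\emph{Step 2: internal differences.} We have $\Delta(D_i)=\{(1-\gamma)i,\,-(1-\gamma)i\}$. Therefore
\[
\mathrm{Int}(\mathcal D)=(1-\gamma)\bigl(C_0^4\cup -C_0^4\bigr)=(1-\gamma)\bigl(C_0^4\cup C_2^4\bigr)=(1-\gamma)C_0^2 ,
\]
with each element occurring once. Since $1-\gamma\neq 0$ and, by hypothesis, $1-\gamma\notin C_0^2$, we get $1-\gamma\in C_1^2$. So $\mathrm{Int}(\mathcal D)=C_1^2=\mathbb F_q^*\setminus S$, i.e.\ $\mathrm{Int}(\mathcal D)=0\cdot S+1\cdot(G^*\setminus S)$. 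This gives the $(q,(q-1)/4,2;0,1)$-DPDF.

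\emph{Step 3: external differences.} Since $q\equiv 1\pmod 4$, $S=C_0^2$ is the Paley $(q,(q-1)/2,(q-5)/4,(q-1)/4)$-PDS. Hence
\[
\Delta(S)=\tfrac{q-5}{4}C_0^2+\tfrac{q-1}{4}C_1^2 .
\]
Subtracting $\mathrm{Int}(\mathcal D)=C_1^2$ gives $\mathrm{Ext}(\mathcal D)=\tfrac{q-5}{4}(C_0^2+C_1^2)=\tfrac{q-5}{4}\mathbb F_q^*$. This is the $(q,(q-1)/4,2,(q-5)/4)$-EDF.

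There is no real obstacle here. The only points needing care are the disjointness check in Step 1 and confirming that $S$ is exactly $C_0^2$, which is what makes the Paley PDS property available in Step 3.
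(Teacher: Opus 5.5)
Your proposal is correct and follows the paper's proof essentially step by step: you compute $\mathrm{Int}(\mathcal D)=(1-\gamma)C_0^2=C_1^2$, identify $S=C_0^4\cup C_2^4=C_0^2$, and subtract from the Paley difference multiset $\Delta(C_0^2)$ to get $\mathrm{Ext}(\mathcal D)=\frac{q-5}{4}\mathbb F_q^*$. Your explicit checks that $\gamma\neq 1$ and that the $D_i$ are pairwise disjoint $2$-subsets of $\mathbb F_q^*$ are small additions that the paper leaves implicit.
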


\begin{proof}
	Since $q \equiv 5 \mod 8$, $-1=\alpha^{(q-1)/2} \in C_2^4$, so $-C_i^4=C_{i+2}^4$. Observe that $\mathrm{Int}(\mathcal{D})$ is 
	\[
	\cup_{i \in C_0^4} \Delta(D_i)
	=
	\cup_{i \in C_0^4} \{(1-\gamma)i,-(1-\gamma)i\}
	=
	(1-\gamma)C_0^4 \cup -(1-\gamma)C_0^4.
	\]
		If $1-\gamma \in C_0^2$, then
	$
	(1-\gamma)C_0^4 \cup -(1-\gamma)C_0^4
	=
	(1-\gamma)C_0^4 \cup (1-\gamma)C_2^4
	=
	(1-\gamma)(C_0^4 \cup C_2^4)
	=
	(1-\gamma)C_0^2
	=
	C_0^2,
	$ 
	so $\mathrm{Int}(\mathcal{D})$ is $C_0^2$ in this case.
		Similarly, if $1-\gamma \notin C_0^2$, then $1-\gamma \in C_1^2$, so
	$
	(1-\gamma)C_0^4 \cup -(1-\gamma)C_0^4
=	C_1^2,
	$ 
	and $\mathrm{Int}(\mathcal{D})$ is $C_1^2$ . 
	
	Now
	$
	\cup_{ i \in C_0^4} D_i= C_0^4 \cup \gamma C_0^4.
	$ 
	Since $\gamma \in C_2^4$, we have $\gamma C_0^4=C_2^4$, so
	$
	\cup_{i \in C_0^4 } D_i=C_0^4 \cup C_2^4=C_0^2.
	$ 
	
	Since $q \equiv 1 \mod 4$, $\Delta(C_0^2)$ comprises $(q-5)/4$ copies of $C_0^2$ and $(q-1)/4$ copies of $C_1^2$.  If $1-\gamma \in C_0^2$, then $\mathrm{Int}(\mathcal{D})$ is $C_0^2$, so $\mathrm{Ext}(\mathcal{D})$ comprises $(q-9)/4$ copies of $C_0^2$ and $(q-1)/4$ copies of $C_1^2$.  If $1-\gamma \notin C_0^2$, then $\mathrm{Int}(\mathcal{D})$ is $C_1^2$, so $\mathrm{Ext}(\mathcal{D})$ comprises $(q-5)/4$ copies of $C_0^2$ and $(q-5)/4$ copies of $C_1^2$. This completes the proof.
\end{proof}
\begin{example}
For $q=13$, taking primitive element $2$, we have $C_0^4=\{1,3,9\}$.  Take $\gamma=4 \in C_2^4$; then $1-\gamma=-3=10 \in C_0^2$ and so $\mathcal{D}=\{ \{1,4\}, \{3,12\}, \{9,10\}\}$ is a $(13,3,2;1,3)$-EPDF and a $(13,3,2;1,0)$-DPDF.
\end{example}

\begin{remark}
	Taking $\gamma=-1$ in~\ref{thm:q5mod8epdf}, we obtain the family
	$
	D=\{D_i : i \in C_0^4\},
	$ 
	where $D_i=\{i,-i\}$. In this case, the sets $D_i$ are precisely the even cyclotomic classes of order $(q-1)/2$ partitioning $C_0^2$; see Theorem~5.13(ii) of~\cite{HucJoh}. For other choices of $\gamma$, the sets $D_i$ need not be cyclotomic classes.
\end{remark}

\begin{theorem}\label{thm:q1mod8epdf}
	Let $q$ be a prime power congruent to $1$ modulo $8$. Let $\alpha$ be a primitive element of $\mathbb{F}_q$. Let $\gamma \in C_2^4$ and define
	$
	D_i=\{i,-i,\gamma i,-\gamma i\}$ $  (i \in R),
	$ 
	where $R$ is a set of representatives for the quotient group $C_0^4/\{1,-1\}$. Let $\mathcal{D}=\{D_i: i \in R\}$. Then $\mathcal{D}$ is a
	$
	(q,(q-1)/8,4;(q-17)/4,(q-1)/4)\text{-EPDF}
	$ 
	and a
	$ 
	(q,(q-1)/8,4;3,0)\text{-DPDF},
	$ 
	if one of $1-\gamma$ and $1+\gamma$ lies in $C_0^4$ while the other lies in $C_2^4$.
\end{theorem}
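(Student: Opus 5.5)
The plan is to compute $\mathrm{Int}(\mathcal{D})$ directly, as in the proofs of Theorems \ref{thm:DPDFnotclasses} and \ref{thm:q5mod8epdf}. Then $\mathrm{Ext}(\mathcal{D})$ follows from $\mathrm{Int}(\mathcal{D})+\mathrm{Ext}(\mathcal{D})=\Delta(S)$, where $S=\cup_{i\in R} D_i$ is shown to equal the Paley PDS $C_0^2$.

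\textbf{Step 1: basic facts.} Since $q\equiv 1 \pmod 8$, $(q-1)/2$ is divisible by $4$, so $-1=\alpha^{(q-1)/2}\in C_0^4$. Hence $-C_j^4=C_j^4$, and in particular $-\gamma\in C_2^4$. Also $\gamma\neq\pm1$, because $\gamma\notin C_0^4$. So each $D_i$ has exactly four elements, with $D_i\cap C_0^4=\{\pm i\}$ and $D_i\cap C_2^4=\{\pm\gamma i\}$.

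\textbf{Step 2: $S=C_0^2$ and disjointness.} As $i$ runs over $R$, the pairs $\{\pm i\}$ partition $C_0^4$. Likewise the pairs $\{\pm\gamma i\}$ partition $\gamma C_0^4=C_2^4$. Therefore the $D_i$ are pairwise disjoint subsets of $G^*$, there are $(q-1)/8$ of them, and $S=C_0^4\cup C_2^4=C_0^2$.

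\textbf{Step 3: internal differences of one set.} Listing the twelve ordered differences in $D_i=\{i,-i,\gamma i,-\gamma i\}$ gives
$$\Delta(D_i)=\{\pm 2i\}+\{\pm2\gamma i\}+2\{\pm(1-\gamma)i\}+2\{\pm(1+\gamma)i\}.$$
The multiplicity $2$ arises because, for example, $i-\gamma i=(-\gamma i)-(-i)$.

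\textbf{Step 4: summing over $R$.} Since $\{\pm i: i\in R\}=C_0^4$, we get
$$\mathrm{Int}(\mathcal{D})=2C_0^4+2\gamma C_0^4+2\big((1-\gamma)C_0^4+(1+\gamma)C_0^4\big).$$
By Lemma \ref{lem:2is}(i), $2\in C_0^2$. Thus $2C_0^4+2\gamma C_0^4=2C_0^2=C_0^2$ as a set, since multiplying by the square $2$ fixes $C_0^2$.

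Under the hypothesis, one of $1\pm\gamma$ lies in $C_0^4$ and the other in $C_2^4$. Then $(1-\gamma)C_0^4\cup(1+\gamma)C_0^4=C_0^4\cup C_2^4=C_0^2$. So $\mathrm{Int}(\mathcal{D})=3C_0^2$, and $\mathcal{D}$ is a $(q,(q-1)/8,4;3,0)$-DPDF.

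\textbf{Step 5: external differences.} Since $q\equiv1\pmod 4$, $\Delta(C_0^2)$ consists of $(q-5)/4$ copies of $C_0^2$ and $(q-1)/4$ copies of $C_1^2$. Subtracting $\mathrm{Int}(\mathcal{D})$ leaves $(q-17)/4$ copies of $C_0^2$ and $(q-1)/4$ copies of $C_1^2$. This is the stated EPDF.

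The main obstacle is the multiplicity bookkeeping in Steps 3 and 4. One must check that $\pm i$ running over $R$ covers each element of $C_0^4$ exactly once. One must also check that the scaled copies $2C_0^4$, $2\gamma C_0^4$, $(1\pm\gamma)C_0^4$ combine into full copies of $C_0^2$ with the right counts, which is where $-1\in C_0^4$ and $2\in C_0^2$ are used.
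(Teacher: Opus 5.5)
Your proposal is correct and follows essentially the same route as the paper: you show $S=C_0^2$, compute $\Delta(D_i)$ explicitly, and use $-1\in C_0^4$, $2\in C_0^2$ and the hypothesis on $1\pm\gamma$ to get $\mathrm{Int}(\mathcal{D})=3C_0^2$, then obtain $\mathrm{Ext}(\mathcal{D})$ by subtracting this from the Paley difference multiset $\Delta(C_0^2)$. The only cosmetic difference is that the paper writes $2\in C_r^4$ with $r\in\{0,2\}$ to get $2C_0^4\cup 2\gamma C_0^4=C_r^4\cup C_{r+2}^4=C_0^2$, whereas you use directly that multiplying by the square $2$ fixes $C_0^2$.
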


\begin{proof}
	Since $q \equiv 1 \mod 8$, we have $-1=\alpha^{(q-1)/2}\in C_0^4$, so $-C_i^4=C_i^4$ for each $i$. Also, since $q \equiv 1 \mod 8$, by Lemma \ref{lem:2is} we have $2 \in C_0^2$. Hence $2 \in C_r^4$ for some $r \in \{0,2\}$.
	
	Observe that 
	$
	\cup_{i\in R}D_i=C_0^4\cup \gamma C_0^4.
	$ 
	Since $\gamma \in C_2^4$, we have $\gamma C_0^4=C_2^4$, so
	$
	\cup_{i\in R}D_i=C_0^4\cup C_2^4=C_0^2.
	$ 
	
	We now consider the multiset of internal differences of $\mathcal{D}$. We get
	\[
	\bigcup_{i\in R}\Delta(D_i)
	=
	\bigcup_{i\in R}\{2i,-2i,2\gamma i,-2\gamma i\}
	\cup
	2\bigcup_{i\in R}\{(1-\gamma)i,-(1-\gamma)i\}
	\cup
	2\bigcup_{i\in R}\{(1+\gamma)i,-(1+\gamma)i\}.
	\]
	Since $R$ is a set of representatives for $C_0^4/\{1,-1\}$ and $-1\in C_0^4$, it follows that
	$
	\cup_{i\in R}\{2i,-2i\}=2C_0^4=C_r^4
	$ 
	and
	$
	\cup_{i\in R}\{2\gamma i,-2\gamma i\}=2\gamma C_0^4=C_{r+2}^4.
	$ 
	As $\{r,r+2\}=\{0,2\}$, we have
	\[
	\bigcup_{i\in R}\{2i,-2i,2\gamma i,-2\gamma i\}=C_0^4 \cup C_2^4=C_0^2.
	\]
	
	If
	$
	1-\gamma \in C_0^4$ and $  1+\gamma \in C_2^4,
	$ 
	then
	\begin{align*}
		2\bigcup_{i\in R}\{(1-\gamma)i,-(1-\gamma)i\} = 2C_0^4,  \quad \text{ and }
		\quad 2\bigcup_{i\in R}\{(1+\gamma)i,-(1+\gamma)i\} = 2C_2^4.
	\end{align*}
	Similarly, if
	$
	1-\gamma \in C_2^4,$ and $ 1+\gamma \in C_0^4,
	$ 
	\begin{align*}
		2\bigcup_{i\in R}\{(1-\gamma)i,-(1-\gamma)i\} = 2C_2^4,  \quad \text{ and }
		\quad 2\bigcup_{i\in R}\{(1+\gamma)i,-(1+\gamma)i\} = 2C_0^4.
	\end{align*}
	In either case,
	$
	\bigcup_{i \in R} \Delta(D_i)=3C_0^2.
	$ 	Thus $\mathcal{D}$ is a $(q,(q-1)/8,4;3,0)$-DPDF.

	Since $q \equiv 1 \mod 4$, the difference multiset of $C_0^2$ comprises $(q-5)/4$ copies of $C_0^2$ and $(q-1)/4$ copies of $C_1^2$. Subtracting the internal difference multiset, the external difference multiset of $\mathcal{D}$ comprises $(q-17)/4$ copies of $C_0^2$ and $(q-1)/4$ copies of $C_1^2$. Hence $\mathcal{D}$ is a $(q,(q-1)/8,4;(q-17)/4,(q-1)/4)$-EPDF. This completes the proof.
\end{proof}

\begin{remark}
In \cite{ChaDin}, it is shown that when $q \equiv 1 \mod 8$, the family $\mathcal{D}= \{D_i : i \in R\}$, where $D_i=\{i,-i,\gamma i,-\gamma i\}$, $\gamma \in C_2^4$ and $R$ is a set of representatives for $C_0^4/\{1,-1\}$, is an $(q,(q-1)/8,4,(q-9)/4)$ -EDF whenever $1-\gamma \in C_1^4$  and $  1+\gamma \in C_3^4$.  By the argument in the proof of Theorem \ref{thm:q1mod8epdf}, the same conclusion holds when
$1-\gamma \in C_3^4$  and $ 1+\gamma \in C_1^4$. 
\end{remark}

Since for any odd $q$, $C_0^{(q-1)/2}=\{1,-1\}$, each $D_i$ of Theorem \ref{thm:q1mod8epdf} is the union $D_i=C_a^{(q-1)/2} \cup C_{a+b}^{(q-1)/2}$ where $i=\alpha^a$ and $\gamma=\alpha^b$.  This is a single class precisely if $b=(q-1)/4$.
We give examples of Theorem \ref{thm:q1mod8epdf} below.
\begin{example}
\begin{itemize}
\item[(i)] In $\mathbb{F}_{25}$, let primitive element $\alpha$ be a root of $x^2+2x+3 \in \mathbb{F}_5[x]$.  Then $C_0^4=\{1,4\alpha+2,4\alpha+1,4,\alpha+3,\alpha+4\}$ and take $R=\{1,\alpha+3,\alpha+4\}$.  Let $\gamma=\alpha^6=3$; then $1-\gamma=3 \in C_2^4$ and $1+\gamma=4 \in C_0^4$. Here $D_1=\{1,2,3,4\}, D_{\alpha+3}=\{\alpha+3,2\alpha+1,3\alpha+4,4\alpha+2\}$ and $D_{\alpha+4}=\{\alpha+4,2\alpha+3,3\alpha+2,4\alpha+1\}$.  Note that $(q-1)/4=6$, and $\mathcal{D}$ comprises the even cyclotomic classes of order $6$.
\item[(ii)] In $\mathbb{F}_{89}$, take primitive element $\alpha=3$.
	Let $\gamma=\alpha^2=9$. Then
	$
	1-\gamma=81=3^4 \in C_0^4
	$ 
	and
	$
	1+\gamma=10=3^{86} \in C_2^4
	$.
	Take
	$
	R=\{3^{4j}:0\leq j\leq 10\}
	$. For $i=3^{4j}\in R$, consider
	$
	D_i=\{i,-i,9i,-9i\}.
	$ 
	Since $-1=3^{44}$, we have
	$D_i=\{3^{4j},3^{4j+2},3^{4j+44},3^{4j+46}\}$.  Here $D_i=C_{4j}^{44} \cup C_{4j+2}^{44}$, so each $D_i$ is a union of two cyclotomic classes of order $(q-1)/2$ (but not a single cyclotomic class). 
	\end{itemize}
\end{example}

\section{Open Problems}
In this paper, we have shown that the skew PDSs introduced in \cite{AnbKalMei} are not an isolated phenomenon, but that they can be constructed as unions of cyclotomic classes in finite fields.  Moreover, we have demonstrated that there exist skew PDSs not only of Latin square type, but also of Paley type.  

This leads to the following natural open problems.
\begin{description}
\item[Open Problem 1]  Are there other families of skew PDSs which can be constructed using unions of cyclotomic classes in $\mathbb{F}_q$?   

\item[Open Problem 2] Do there exist skew PDSs of types other than Latin square and Paley type - for example, of negative Latin square type?

\item[Open Problem 3] We have seen constructions of skew PDSs using bent partitions \cite{AnbKalMei} and cyclotomy.  Can skew PDSs be constructed via other techniques?
\end{description}

Another contribution of this paper is to present cyclotomic constructions for families of DPDFs and EPDFs relative to a set $T$, where in our setting $T$ is a Paley PDS.   
\begin{description}
\item[Open Problem 4] Are there other families of DPDFs and EPDFs, relative to PDSs, comprising cyclotomic classes or their unions, or as partitions of cyclotomic classes?  
\item[Open Problem 5] Find families of DPDFs and EPDFs, relative to PDSs which are not of Paley type.
\end{description}

\section{Acknowledgement}
This work was carried out in part  during a research visit of Tekgül Kalayc\i \ to the University of St Andrews. Tekgül Kalayc\i \ would like to thank the School of Mathematics and Statistics at the University of St Andrews for the hospitality and support.

\section*{Appendix}
In this Appendix, we present for the reader's convenience the necessary information on cyclotomic numbers of order $4$ and $8$.

The following result due to Katre and Rajwade \cite{KatRaj} resolves the sign ambiguity for cyclotomic numbers of order $4$ for $\mathbb{F}_q$ ($q$ prime power).

\begin{theorem}\label{thm:KaRaThm}
Let $p$ be an odd prime, let $q=p^m \equiv 1 \mod 4$, and write $q=4f+1$.  Let $\alpha$ be a generator of $\mathbb{F}_q^*$. 
\begin{itemize}
    \item If $p \equiv 3 \mod 4$, let $s=(-p)^{m/2}$ and $t=0$.
    \item If $p \equiv 1 \mod 4$, define $s$ uniquely by $q=s^2+t^2$, $p \nmid s$, $s \equiv 1 \mod 4$, then $t$ uniquely by $\alpha^{(q-1)/4} \equiv s/t \mod p$.
\end{itemize}
Then the cyclotomic numbers of order $4$ for $\mathbb{F}_q$, corresponding to $v$, are determined unambiguously by the following formulae:
\begin{itemize}
    \item For $f$ even: 
    \begin{itemize}
        \item $A=(0,0)_4=\frac{1}{16}(q-11-6s)$;
        \item $B=(1,0)_4=(0,1)_4=(3,3)_4=\frac{1}{16}(q-3+2s+4t)$;
        \item $C=(2,0)_4=(0,2)_4=(2,2)_4=\frac{1}{16}(q-3+2s)$; 
        \item $D=(3,0)_4=(0,3)_4=(1,1)_4=\frac{1}{16}(q-3+2s-4t)$; 
        \item $E=(1,2)_4=(1,3)_4=(2,1)_4=(2,3)_4=(3,1)_4=(3,2)_4=\frac{1}{16}(q+1-2s)$.
    \end{itemize}
    \item For $f$ odd: 
    \begin{itemize}
        \item $A=(0,0)_4=(2,0)_4=(2,2)_4=\frac{1}{16}(q-7+2s)$; 
        \item $B=(0,1)_4=(1,3)_4=(3,2)_4=\frac{1}{16}(q+1+2s-4t)$;
        \item $C=(0,2)_4=\frac{1}{16}(q+1-6s)$;
        \item $D=(0,3)_4=(1,2)_4=(3,1)_4=\frac{1}{16}(q+1+2s+4t)$;
        \item $E=(1,0)_4=(1,1)_4=(2,1)_4=(2,3)_4=(3,0)_4=(3,3)_4=\frac{1}{16}(q-3-2s)$.
    \end{itemize}
\end{itemize}
\end{theorem}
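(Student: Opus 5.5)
The plan is to derive all sixteen numbers $(i,j)_4$ from Jacobi sums of a quartic character. First I fix the character $\chi$ of order $4$ on $\mathbb{F}_q^*$ with $\chi(\alpha)=\mathrm{i}$, so that $\chi$ is tied to the chosen generator $\alpha$. Orthogonality of characters then gives
\[
16\,(i,j)_4=\sum_{a,b=0}^{3}\chi^{-a}(\alpha^{i})\chi^{-b}(\alpha^{j})\,S(a,b),
\]
where $S(a,b)=\sum_{z}\chi^a(z)\chi^b(1+z)$ is, up to the trivial-character corrections, a Jacobi sum $J(\chi^a,\chi^b)$. In this expansion every term can be evaluated explicitly except the two genuinely quartic sums $J(\chi,\chi)$ and $J(\chi,\chi^2)$ (together with their conjugates). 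The terms involving trivial characters or only the quadratic character $\chi^2$ give the constants $q$, $-3$, $-7$, $\pm1$, and so on.

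Second, I cut down the number of unknowns using the standard symmetries. One is $(i,j)=(-i,j-i)$. The other depends on the parity of $f$: $(i,j)=(j,i)$ if $f$ is even, and $(i,j)=(j+2,i+2)$ if $f$ is odd, the distinction coming from whether $-1\in C_0^4$ or $-1\in C_2^4$. These collapse the sixteen numbers to the five classes $A,\dots,E$ listed in the statement. The row-sum relations $\sum_j (i,j)_4=f-\theta_i$, with $\theta_i$ depending on which class contains $-1$, give linear constraints that serve as a check on the constants.

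Third, I identify the Jacobi sums. We have $J(\chi,\chi^2)\in\mathbb{Z}[\mathrm{i}]$ with norm $q$, and $J(\chi,\chi)=\chi(4)J(\chi,\chi^2)$; since $\chi(4)=\chi^2(2)=\pm1$, this factor is exactly what makes the $f$ even and $f$ odd formulas differ. Writing $J(\chi,\chi^2)=-s-t\mathrm{i}$ or a similar normalisation, and substituting into the expansion, gives linear expressions in $q$, $s$ and $t$ of the displayed shape.

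The main obstacle is pinning down $s$ and $t$ exactly, signs included. I would argue as follows.
\begin{itemize}
\item \emph{The sign of $s$.} I would use the congruence $J(\chi,\chi^2)\equiv -1 \pmod{(1+\mathrm{i})^3}$, obtained by reducing the character sum modulo $2+2\mathrm{i}$. This forces $s\equiv1\pmod 4$.
\item \emph{The sign of $t$ when $p\equiv1\pmod4$.} Reduce modulo a prime $\mathfrak p$ of $\mathbb{Z}[\mathrm{i}]$ above $p$, using $\chi(x)\equiv x^{(q-1)/4}\pmod{\mathfrak p}$. Then $\mathrm{i}=\chi(\alpha)\equiv\alpha^{(q-1)/4}$, and combining this with a Stickelberger-type vanishing of $J$ modulo $\mathfrak p$ (or $\bar{\mathfrak p}$) yields $s+t\alpha^{(q-1)/4}\equiv0$. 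This is exactly the normalisation $\alpha^{(q-1)/4}\equiv s/t$ up to the convention of the statement, and it fixes $t$ uniquely.
\item \emph{The case $p\equiv3\pmod4$.} Here $m$ is even and we are in the semiprimitive case. Stickelberger's evaluation of Gauss sums gives $g(\chi)^2$ explicitly, hence $J(\chi,\chi^2)=-(-p)^{m/2}$, so $s=(-p)^{m/2}$ and $t=0$.
\end{itemize}
Checking that these sign conventions match the stated formulas, for example against $q=13$ and $q=17$, finishes the proof.
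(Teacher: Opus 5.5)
The paper gives no proof of this statement; it is quoted from Katre and Rajwade \cite{KatRaj}. Your architecture is the standard route and can be completed: an orthogonality expansion, reduction to a single quartic Jacobi sum, a $2$-adic congruence for its real part, and reduction modulo a prime above $p$ for its imaginary part. However, the sign determination, which is the entire content of the theorem, fails as you state it. It fails exactly at the hedges ``or a similar normalisation'' and ``up to the convention of the statement''.

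\textbf{The real part.} The congruence $J(\chi,\chi^2)\equiv-1\pmod{(1+\mathrm{i})^3}$ is true, but it does not give $\mathrm{Re}\,J(\chi,\chi^2)=-s$ with $s\equiv1\pmod 4$. An element $a+b\mathrm{i}\equiv-1$ has $a\equiv-1\pmod4$ only when $4\mid b$, i.e.\ when $q\equiv1\pmod 8$. When $q\equiv5\pmod8$ it has $a\equiv1\pmod4$ instead. For example, for $q=13$, $\alpha=2$, $\chi(2)=\mathrm{i}$ one computes $J(\chi,\chi^2)=-3+2\mathrm{i}$, whose real part is $+s$, not $-s$. Note also that $S(a,b)=\chi^a(-1)J(\chi^a,\chi^b)$, so $\varepsilon=\chi(-1)=(-1)^f$ enters in addition to $\chi(4)$. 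The expansion gives $16(0,0)_4=q-9-2\varepsilon+(4\varepsilon+2)\,\mathrm{Re}\,J(\chi,\chi)$. Your normalisation then yields $(0,0)_4=(q-7-2s)/16$ for $f$ odd, which is $12/16$ at $q=13$.

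\textbf{The imaginary part.} Your relation $s+t\alpha^{(q-1)/4}\equiv0$ is the opposite of the stated convention. For $q=17$, $\alpha=3$ it gives $t=-4$, and the stated formula then gives $(0,1)_4=0$, whereas in fact $(0,1)_4=2$.

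\textbf{The repair.} Normalise $J(\chi,\chi)$ rather than $J(\chi,\chi^2)$.

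Pairing $x\leftrightarrow 1-x$ (with fixed point $x=1/2$) gives $J(\chi,\chi)\equiv\bar\chi(4)+q-3\equiv-\chi(4)\pmod{2+2\mathrm{i}}$. By Lemma~\ref{lem:2is}, $\chi(4)=\chi^2(2)=(-1)^f$. Since $4\mid\mathrm{Im}\,J$ exactly when $f$ is even, this gives $\mathrm{Re}\,J(\chi,\chi)\equiv-1\pmod4$ in both cases.

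Now work modulo $\mathfrak p=(p,\mathrm{i}-\alpha^{(q-1)/4})$, reading $\alpha^{(q-1)/4}\in\mathbb F_p$ as an integer. There $\chi(x)\equiv x^k$ with $k=(q-1)/4$, so $J(\chi,\chi)\equiv\sum_{x\in\mathbb F_q}x^k(1-x)^k=0$, because all exponents lie in $[k,2k]$. Hence $J(\chi,\chi)=-s+t\mathrm{i}$ with $\alpha^{(q-1)/4}\equiv s/t\pmod p$ exactly as stated, i.e.\ $J(\chi,\chi^2)=(-1)^f(-s+t\mathrm{i})$. With this normalisation the expansion reproduces the displayed formulas, e.g.\ $16(0,1)_4=q-1-2\varepsilon+2s+4\varepsilon t$.

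For $m\ge2$ you also need $p\nmid s$. This is required both to identify $-\mathrm{Re}\,J(\chi,\chi)$ with the proper representation and to make $s/t$ meaningful. It amounts to $J\notin\bar{\mathfrak p}$, which follows from Hasse--Davenport. Alternatively, $J\equiv-(-1)^k\binom{3k}{k}\pmod{\bar{\mathfrak p}}$, and Lucas' theorem gives $\binom{3k}{k}\equiv\binom{3(p-1)/4}{(p-1)/4}^m\not\equiv0\pmod p$.

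Your case $p\equiv3\pmod4$ is correct. There the congruence alone already suffices, since $p$ is inert in $\mathbb Z[\mathrm{i}]$ and so $J(\chi,\chi)$ is a unit times $p^{m/2}$.
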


For the cyclotomic numbers of order $8$, we have the following, from the Appendix of \cite{BraMei} and Lemma 30 of \cite{Sto}.

\begin{theorem}\label{thm:e=8cyclo}
Let $q=8f+1$ be a prime power.  The cyclotomic numbers are uniquely determined by $x,y,a$ and $b$, defined as follows.
\begin{itemize}
    \item $q=x^2+4y^2$, $x \equiv 1 \mod 4$ is the unique proper representation of $q=p^m$ if $p \equiv 1 \mod 4$; otherwise $x=\pm p^{m/2}$ and $y=0$.
    \item $q=a^2+2b^2$, $a \equiv 1 \mod 4$, is the unique proper representation of $q=p^m$ if $p \equiv 1$ or $3 \mod 8$; otherwise $a= \pm p^{m/2}$ and $b=0$. 
\end{itemize}
The signs of $y$ and $b$ are ambiguously determined.

\begin{itemize}
\item[]{\bf Case when $q=8f+1$, $f$ odd (i.e $q \equiv 9 \mod 16$)}\\
The relations between the cyclotomic numbers are given by:
\begin{itemize}
\item[]$A=(0,0)_8 =(4,0)_8 =(4,4)_8$; $B=(0,1)_8 =(3,7)_8 =(5,4)_8$;
\item[]$C=(0,2)_8 =(2,6)_8 =(6,4)_8$; $D=(0,3)_8 =(1,5)_8 =(7,4)_8$; 
\item[]$E=(0,4)_8$; $F=(0,5)_8 =(1,4)_8 =(7,3)_8$; 
\item[]$G=(0,6)_8 =(2,4)_8 =(6,2)_8$; $H=(0,7)_8 =(3,4)_8 =(5,1)_8$;
\item[]$I=(1,0)_8 =(3,3)_8 =(4,1)_8 =(4,5)_8 =(5,0)_8 =(7,7)_8$;
\item[]$J=(1,1)_8 =(3,0)_8 =(4,3)_8 =(4,7)_8 =(5,5)_8 =(7,0)_8$;
\item[]$K=(1,2)_8 =(2,7)_8 =(3,6)_8 =(5,3)_8 =(6,5)_8 =(7,1)_8$;
\item[]$L=(1,3)_8 =(1,6)_8 =(2,5)_8 =(6,3)_8 =(7,2)_8 =(7,5)_8$;
\item[]$M=(1,7)_8 =(2,3)_8 =(3,5)_8 =(5,2)_8 =(6,1)_8 =(7,6)_8$;
\item[]$N=(2,0)_8 =(2,2)_8 =(4,2)_8 =(4,6)_8 =(6,0)_8 =(6,6)_8$;
\item[]$O=(2,1)_8 =(3,1)_8 =(3,2)_8 =(5,6)_8 =(5,7)_8 =(6,7)_8$.
\end{itemize}

$
\begin{array}{|c|c|c|}
\hline
q \equiv 9 \mod 16 &\text{$2$ a quartic residue} &\text{$2$ not a quartic residue}\\
\hline
64A & q-15-2x & q-15-10x-8a \\
64B & q+1+2x-4a+16y & q+1+2x-4a-16b\\
64C & q+1+6x+8a-16y& q+1-2x+16y\\
64D & q+1+2x-4a-16y & q+1+2x-4a-16b\\
64E & q+1-18x & q+1+6x+24a\\
64F & q+1+2x-4a+16y & q+1+2x-4a+16b\\
64G & q+1+6x+8a+16y & q+1-2x-16y\\
64H & q+1+2x-4a-16y & q+1+2x-4a+16b\\
64I & q-7+2x+4a & q-7+2x+4a+16y\\
64J & q-7+2x+4a & q-7+2x+4a-16y\\
64K & q+1-6x+4a+16b & q+1+2x-4a\\
64L & q+1+2x-4a & q+1-6x+4a\\
64M & q+1-6x+4a-16b & q+1+2x-4a\\
64N & q-7-2x-8a & q-7+6x\\
64O & q+1+2x-4a & q+1-6x+4a\\
\hline
\end{array}
$

\item[]\noindent{\bf Case when $q=8f+1$, $f$ even (i.e $q \equiv 1 \mod 16$)}\\
The relations between the cyclotomic numbers are given by:
\begin{itemize}
\item[]$A=(0,0)_8$; $B=(0,1)_8 =(1,0)_8 =(7,7)_8$;
\item[]$C=(0,2)_8 =(2,0)_8 =(6,6)_8$; $D=(0,3)_8 =(3,0)_8 =(5,5)_8$; 
\item[]$E=(0,4)_8=(4,0)_8=(4,4)_8$; $F=(0,5)_8 =(5,0)_8 =(3,3)_8$; 
\item[]$G=(0,6)_8 =(6,0)_8 =(2,2)_8$; $H=(0,7)_8 =(7,0)_8 =(1,1)_8$;
\item[]$I=(1,2)_8 =(2,1)_8 =(1,7)_8 =(7,1)_8 =(6,7)_8 =(7,6)_8$;
\item[]$J=(1,3)_8 =(3,1)_8 =(2,7)_8 =(7,2)_8 =(5,6)_8 =(6,5)_8$;
\item[]$K=(1,4)_8 =(4,1)_8 =(3,7)_8 =(7,3)_8 =(4,5)_8 =(5,4)_8$;
\item[]$L=(1,5)_8 =(5,1)_8 =(3,4)_8 =(4,3)_8 =(4,7)_8 =(7,4)_8$;
\item[]$M=(1,6)_8 =(6,1)_8 =(2,3)_8 =(3,2)_8 =(5,7)_8 =(7,5)_8$;
\item[]$N=(2,4)_8 =(4,2)_8 =(2,6)_8 =(6,4)_8 =(4,6)_8 =(6,2)_8$;
\item[]$O=(2,5)_8 =(5,2)_8 =(3,5)_8 =(5,3)_8 =(3,6)_8 =(6,3)_8$.
\end{itemize}

$
\begin{array}{|c|c|c|}
\hline
q \equiv 1 \mod 16 &\text{$2$ a quartic residue} &\text{$2$ not a quartic residue}\\
\hline
64A & q-23-18x-24a & q-23+6x \\
64B & q-7+2x+4a+16y+16b & q-7+2x+4a\\
64C & q-7+6x+16y& q-7-2x-8a-16y\\
64D & q-7+2x+4a-16y+16b & q-7+2x+4a\\
64E & q-7-2x+8a & q-7-10x\\
64F & q-7+2x+4a+16y-16b & q-7+2x+4a\\
64G & q-7+6x-16y & q-7-2x-8a+16y\\
64H & q-7+2x+4a-16y-16b & q-7+2x+4a\\
64I & q+1+2x-4a & q+1-6x+4a\\
64J & q+1-6x+4a & q+1+2x-4a-16b\\
64K & q+1+2x-4a  & q+1+2x-4a+16y\\
64L & q+1+2x-4a & q+1+2x-4a-16y\\
64M & q+1-6x+4a & q+1+2x-4a+16b\\
64N & q+1-2x & q+1+6x+8a\\
64O & q+1+2x-4a & q+1-6x+4a\\
\hline
\end{array}
$

\end{itemize}

\end{theorem}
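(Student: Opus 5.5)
The plan is the classical Jacobi-sum route of Dickson, Whiteman and Berndt--Evans--Williams, which is also how the cited sources (Appendix of \cite{BraMei}, Lemma 30 of \cite{Sto}) obtain the table. It has three stages: the coincidences among the numbers $(i,j)_8$, a Fourier inversion to Jacobi sums, and the evaluation of those Jacobi sums in terms of $x,y,a,b$.

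\emph{Symmetry relations.} These come from substitutions in $z_i+1=z_j$.
\begin{itemize}
\item Multiplying by $z_i^{-1}$ and using $-1\in C_{4f}^8$ (that is, $-1\in C_0^8$ when $f$ is even and $-1\in C_4^8$ when $f$ is odd) gives $(i,j)_8=(-i,j-i)_8$.
\item Swapping roles via $z_j-1=z_i$ gives $(i,j)_8=(j,i)_8$ for $f$ even, and $(i,j)_8=(j+4,i+4)_8$ for $f$ odd.
\end{itemize}
Iterating these generates the orbits listed in the theorem, giving the letters $A,\dots,O$ in each case. Alongside this I would record the linear identities
$$\sum_j (i,j)_8=f-\theta_i,\qquad \sum_i (i,j)_8=f-\delta_{j,0},$$
where $\theta_i$ is $1$ exactly when $-1\in C_i^8$ and $0$ otherwise. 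These serve as consistency checks on the final table.

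\emph{Inversion to Jacobi sums.} Let $\chi$ be the character of order $8$ with $\chi(\alpha)=\zeta_8$. The standard inversion formula gives
$$64\,(i,j)_8=\sum_{s,t=0}^{7}\zeta_8^{-(is+jt)}\,\chi^{s}(-1)\,J(\chi^s,\chi^t),$$
with the usual convention for trivial characters: $J(\varepsilon,\varepsilon)=q-2$, and $J(\varepsilon,\chi^t)=-1$ when $t\neq 0$. Using $J(\chi^s,\chi^t)=\chi^s(-1)J(\chi^s,\chi^{-s-t})$ and Galois conjugation, every term reduces to a few Jacobi sums:
\begin{itemize}
\item the quadratic sum $J(\chi^4,\chi^4)=-\chi^4(-1)$;
\item the order-$4$ sum $J(\chi^2,\chi^2)$;
\item the order-$8$ sums $J(\chi,\chi^2)$ and $J(\chi,\chi^4)$, together with their conjugates.
\end{itemize}

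\emph{Evaluation.} This is where $x,y,a,b$ enter.
\begin{itemize}
\item $J(\chi^2,\chi^2)\in\mathbb{Z}[i]$ has norm $q$ and is primary. After fixing the choice of $\alpha$, this gives $J(\chi^2,\chi^2)=-\chi^2(-1)(x+2yi)$, so $q=x^2+4y^2$ with $x\equiv1\pmod 4$.
\item $J(\chi,\chi^2)\,\chi(4)$ lies in $\mathbb{Z}[\sqrt{-2}]$ with norm $q$, which produces $a+b\sqrt{-2}$ with $a\equiv1\pmod4$.
\item $J(\chi,\chi^4)=\chi(4)J(\chi,\chi)$ lies in $\mathbb{Z}[\zeta_8]$ and is expressed through both of the above via the product formula $J(\chi,\chi)J(\chi,\chi^2)=\ldots$ relating it to the Gauss sums $g(\chi)^2g(\chi^2)^{-1}$.
\end{itemize}
The factor $\chi(4)=\chi(2)^2=\pm1$ is exactly where the dichotomy ``$2$ a quartic residue or not'' enters, and it produces the two columns of each table. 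Substituting into the inversion formula and collecting real parts (the rational parts of $\zeta_8$-combinations) yields the linear expressions $64A,\dots,64O$. I would then check them against the row and column sums above.

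\emph{Degenerate cases.} When $p\not\equiv1\pmod 4$ (respectively $p\not\equiv1,3\pmod 8$), the relevant Gauss sums are pure by Stickelberger. Equivalently, one can lift from $\mathbb{F}_{p^2}$ (or $\mathbb{F}_{p^4}$) via the Davenport--Hasse theorem. This forces $y=0$, $x=\pm p^{m/2}$ (respectively $b=0$, $a=\pm p^{m/2}$), with the sign determined by $(-1)^{m/2}$-type factors. Note that the theorem leaves the sign of $y$ and $b$ ambiguous, so no further normalisation is needed.

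\emph{Main obstacle.} I expect the hardest step to be the precise evaluation of the order-$8$ Jacobi sums. This means establishing the congruences that make them primary, so that $a\equiv1$ and $x\equiv1\pmod4$, and tracking the factors $\chi(-1)$ and $\chi(4)$ across the cases $f$ odd/even and $2$ quartic/non-quartic. I would first try to handle this by reducing $J(\chi,\chi^2)$ and $J(\chi,\chi^4)$ modulo $(1-\zeta_8)^3$ as in Berndt--Evans--Williams, then confirm the resulting signs numerically on small fields such as $q=17,41,9,25,89$ before trusting the general table.
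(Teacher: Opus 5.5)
The paper gives no proof of this statement; it quotes the orbit relations and both tables from the Appendix of \cite{BraMei} and Lemma~30 of \cite{Sto}. Your skeleton is the standard route behind those sources, and it is correct:
\begin{itemize}
\item the orbits of $(i,j)\mapsto(-i,j-i)$ and $(i,j)\mapsto(j+4f,i+4f)$;
\item the inversion formula for $64(i,j)_8$;
\item $\chi(4)=\pm1$ as the source of the quartic-residue dichotomy.
\end{itemize}
The gap is in the evaluation stage, which is the entire content of the tables: you attach $a+b\sqrt{-2}$ to the wrong Jacobi sum.

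$J(\chi,\chi^2)$ does not lie in $\mathbb{Z}[\sqrt{-2}]$, with or without the factor $\chi(4)$. We have $J(\chi,\chi^2)J(\chi,\chi^3)=g(\chi)^2g(\chi^2)/g(\chi^4)=J(\chi,\chi)J(\chi^2,\chi^2)$ and $J(\chi,\chi^3)=\chi(-1)J(\chi,\chi^4)=\chi(-4)J(\chi,\chi)$. Together these give
\[
J(\chi,\chi^2)=\chi(-4)\,J(\chi^2,\chi^2)\in\mathbb{Z}[i],
\]
so $J(\chi,\chi^2)$ carries only $x$ and $y$. Concretely, in $\mathbb{F}_{17}$ with $\alpha=3$ and $\chi(3)=\zeta_8$, one finds $J(\chi,\chi^2)=1-4i$ and $\chi(4)=-1$. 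Hence $\chi(4)J(\chi,\chi^2)=-1+4i\notin\mathbb{Z}[\sqrt{-2}]$.

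The sum that does lie in $\mathbb{Z}[\sqrt{-2}]$ is $J(\chi,\chi^4)=\chi(4)J(\chi,\chi)$. It is fixed by $\sigma_3\colon\zeta_8\mapsto\zeta_8^3$, since $J(\chi^3,\chi^3)=\overline{\chi^3(4)}\,J(\chi^3,\chi^4)=\chi(4)J(\chi,\chi^4)=J(\chi,\chi)$. Here $J(\chi^3,\chi^4)=J(\chi,\chi^4)$ follows from two applications of the reflection formula you quote, plus symmetry. In $\mathbb{F}_{17}$ one gets $J(\chi,\chi)=-3-2\sqrt{-2}$, matching $17=(-3)^2+2\cdot 2^2$. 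So $J(\chi,\chi^4)$ is not something ``expressed through both of the above''; it is the independent quantity that produces $a$ and $b$. As written, your plan would try to establish a false membership and would feed $a,b$ into the wrong terms of the inversion formula. The linear forms $64A,\dots,64O$ could not come out right.

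The repair is to normalise the two genuinely independent sums, for example $J(\chi^2,\chi^2)=-(x+2yi)$ and $J(\chi,\chi^4)=-(a+b\sqrt{-2})$ with $x\equiv a\equiv 1\pmod 4$, up to the ambiguous signs of $y$ and $b$. This is what one finds for $q=9$ and $q=17$. Then express every $J(\chi^s,\chi^t)$ in the inversion formula through these two sums, the quadratic sum, $\chi(-1)=(-1)^f$ and $\chi(4)$. The dichotomy then enters through both $J(\chi,\chi^2)=\chi(-4)J(\chi^2,\chi^2)$ and $J(\chi,\chi)=\chi(4)J(\chi,\chi^4)$.

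The same identification is needed in your degenerate case. For $p\equiv 5\pmod 8$ the order-$8$ Gauss sums are not pure: over $\mathbb{F}_{25}$, Stickelberger gives valuations $3$ and $5$, not $4$ and $4$, at a pair of complex-conjugate primes above $5$. So $b=0$ does not come from purity. It comes from $J(\chi,\chi^4)\in\mathbb{Z}[\sqrt{-2}]$ having norm $p^m$ with $p$ inert in $\mathbb{Q}(\sqrt{-2})$, which forces $J(\chi,\chi^4)=\pm p^{m/2}$.
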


\end{document}